\theoremstyle{plain}
\newtheorem{theorem}{Theorem}[section]
\newtheorem{lemma}[theorem]{Lemma}
\newtheorem{proposition}[theorem]{Proposition}
\newtheorem{assumption}[theorem]{Assumption}
\theoremstyle{definition}
\newtheorem{example}[theorem]{Example}
\newtheorem{remark}[theorem]{Remark}
\DeclareMathOperator{\dom}{dom}
\DeclareMathOperator{\tr}{tr}
\DeclareMathOperator{\solve}{solve}
\title{Simulation Of Infinite-Dimensional Diffusion Bridges}
\author{
  Thorben Pieper-Sethmacher \\
  Delft Institute for Applied Mathematics \\
  Delft University of Technology \\
    Mekelweg 4, 2628 CD Delft \\
    The Netherlands \\
  \texttt{t.pieper@tudelft.nl} \\
   \And
  Frank van der Meulen \\
  Department of Mathematics \\
  Vrije Universiteit Amsterdam \\
    De Boelelaan 1111, 1081 HV Amsterdam\\
    The Netherlands \\
  \texttt{f.h.van.der.meulen@vu.nl} \\
     \And
  Aad van der Vaart \\
  Delft Institute for Applied Mathematics \\
  Delft University of Technology \\
    Mekelweg 4, 2628 CD Delft \\
    The Netherlands \\
  \texttt{a.w.vandervaart@tudelft.nl} \\
}
\begin{document}
\maketitle
\begin{abstract}
Let $X$ be the mild solution to a semilinear stochastic partial differential equation. 
In this article, we develop methodology to sample from the infinite-dimensional diffusion bridge that arises from conditioning $X$ on a linear transformation $L X_T$ of the final state $X_T$ at some time $T> 0$. This solves a problem that has so far not been attended to in the literature. Our main contribution is the derivation of a path measure that is absolutely continuous with respect to the path measure of the infinite-dimensional diffusion bridge. 
This lifts previously known results for stochastic ordinary differential equations to the setting of infinite-dimensional diffusions and stochastic partial differential equations.
We demonstrate our findings through numerical experiments on stochastic reaction-diffusion equations.
\end{abstract}

\keywords{Doob's h-transform; Exponential change of measure; Guided process; Infinite-dimensional diffusion bridge; Monte Carlo methods; Semilinear SPDE; SPDE bridge; Stochastic Allen-Cahn equation, Stochastic Amari equation}

\section{Introduction}
\label{sec: 1_intro}

Consider a semilinear \textit{stochastic partial differential equation (SPDE)} of the form
\begin{align}
\label{eq: semilin_spde}
	\begin{split}
		\begin{cases}
			\df X_t &= \left[  A X_t + F(t,X_t) \right] \df t + Q^{\fsqrt} \df W_t, \quad t \geq 0, \\
			X_0 &= x_0.
		\end{cases}
	\end{split}
\end{align}
The operator $A$ denotes the generator of a strongly continuous semigroup $(S_t)_{t \geq 0}$ on a Hilbert space $H$, whereas $F$ denotes a nonlinear operator and $Q$ is a symmetric, positive operator on $H$. The process $W$ is a cylindrical Wiener process on $H$, defined on a stochastic basis $(\Om, \calF, (\calF_t)_{t \geq 0},\P)$. We assume that the operators $A$, $F$ and $Q$ satisfy suitable conditions such that Equation \eqref{eq: semilin_spde} admits a unique mild solution $X = (X_t)_{t \geq 0}$ for any $x_0 \in H.$

For some $k \geq 1$, let $L: H \to \R^k$ denote a bounded, linear operator, termed the observation operator. Suppose we observe the linear transformation $LX_T$ of $X_T$ at some time $T >0$. 
This includes the highly relevant cases that we observe an orthogonal projection of $X_T$ onto a finite-dimensional subspace or that, in the case of $H = L^2(D)$ for some $D \subset \R^d$, we observe weighted integrals of $X_T$.

We are interested in sampling the process $X$ on the interval $[0,T]$, conditioned on the event $\{L X_T = y\}$ for some fixed $y \in \R^k$. This problems arises, for example, in parameter estimation for partially observed SPDEs. 
Following the literature for diffusion processes with finite-dimensional state spaces, we refer to the conditioned process as an \textit{infinite-dimensional diffusion bridge} and denote it by $\Xstr$.
To the best of our knowledge, the problem of drawing samples of $\Xstr$ has not been considered before in the generality of our setup.

This stands in contrast to the corresponding problem for a stochastic \emph{ordinary} differential equation, which has attracted considerable attention in the past two decades (see for instance \cite{Delyon2006simulation}, \cite{Beskos08MCMC}, \cite{Papaspiliopoulos2012importance},  \cite{vdMeulenSchauer2017Guided}, \cite{vdMeulenBierkensSchauer2020Simulation}, \cite{Heng2021simulating}).
By Doob's $h$-transform (\cite{Rogers2000diffusions}, Section IV.39), it can be shown that a \textit{diffusion bridge} $\xstr$ associated with the strong solution $x$ to the SODE
\begin{align*}
        \df x_t =  f(x_t) \df t + \sigma \df w_t
\end{align*}
is characterized by yet another SODE given by 
\begin{align*}
    \df \xstr_t = [f(\xstr_t) + \sigma \sigma^{*} \Df_x \log h(t,\xstr_t)] \df t +  \sigma \df w_t. 
\end{align*}
Here, $h$ is a mapping that depends on both the transition densities $p$ and the matrix representation of the map $L$. 
In particular, the conditioned process $\xstr$ solves an SODE itself, where the additional term $ \sigma \sigma^{*} \Df_x \log h(t,\xstr_t)$ is added to the drift coefficient to steer the process towards the point of conditioning.

Unfortunately, the transition density $p$, and consequently the function $h$, is not known in closed form except for a few special cases. 
This has motivated several works in which the map $h$ is replaced by a substitute $g$ in the steering term (\cite{Clark1990simulation}, \cite{Delyon2006simulation}, \cite{vdMeulenSchauer2017Guided}, \cite{Mider2021continuous}). If $g$ is chosen carefully, then indeed the laws of the conditioned process and the process that substitutes $g$ for $h$ are absolutely continuous on path space. In particular, \emph{weighted samples} of the conditioned process can be obtained. This has important applications for example in Bayesian parameter estimation for discretely observed diffusion processes (see for instance \cite{Roberts2001inference} and \cite{Papaspiliopoulos2013data}).

The literature on the corresponding problem for infinite-dimensional diffusions is scarce. Notable exceptions are \cite{DinunnoPetersson23} and \cite{Yang2024Simulating}. The former considers the situation where $F \equiv 0$, in which case the diffusion bridge is given by a conditioned Gaussian process on $H$. Moreover, convergence rates are derived for the spatial discretisation of $\Xstr$. 
In \cite{Yang2024Simulating}, the process $X$ is conditioned on a set of positive measure and projected onto a finite-dimensional subspace of $H$ after which the problem is considered in finite dimensions. Additionally, Equation \eqref{eq: semilin_spde} is assumed to admit a strong solution, an assumption that is generally not satisfied when working with SPDEs.

\subsection{Contribution}

In this paper, we lift earlier results obtained for SODEs in \cite{vdMeulenSchauer2017Guided} and \cite{vdMeulenBierkensSchauer2020Simulation} to SPDEs, resulting in computational methods for sampling infinite-dimensional diffusion bridges. 
This extension is nontrivial as we need to deal with complications arising from working in infinite-dimensional state spaces. 
We strongly depend on our earlier work in \cite{Piepersethmacher24Classexponentialchangesmeasure} on exponential change of measure for SPDEs. Here,  it was shown that the infinite-dimensional diffusion bridge $\Xstr$ can be obtained by an \textit{exponential change of measure} $\Pstr$ on the underlying filtered probability space. 
Under additional assumptions, it was shown that $\Xstr$ is equivalent in law to the mild solution of the \textit{bridge SPDE}
\begin{align*}
        \df \Xstr(t) = \left[ A \Xstr(t) + F(t,\Xstr(t)) + Q \Df_x \log h(t,\Xstr(t))\right] \df t + Q^{\fsqrt} \df W(t), \quad t<T,
\end{align*}
where $h(t,z) := \rho_X(t,z;T,y)$ denotes the density of $L X_T \mid X_t = z$, evaluated at $y$. 
As in the finite-dimensional case, $\rho_X$ is in general intractable, rendering a direct simulation of $\Xstr$ infeasible. As a further complication, the formulation of the bridge SPDE relies on the existence of the gradient $\Df_x \log h$ in the first place. However, to the best of our knowledge, no such regularity result for transition densities of SPDEs of the form \eqref{eq: semilin_spde} has been obtained in the literature so far. 
On the other hand, the \textit{Ornstein-Uhlenbeck} process $Z$, which satisfies the linearized version of \eqref{eq: semilin_spde} with $F \equiv 0$, is a Gaussian process for which the  the density of $LZ_T \mid Z_t=z$ is tractable. Moreover, for this density, which we denote by $\rho_Z$, the required regularity assumptions can be validated. 
This motivates substituting $\rho_X$ by $\rho_Z$, by which we obtain the \textit{guided process} $\Xcrc$ as the mild solution to the SPDE
\begin{align*}
     \df \Xcrc(t) = \left[ A \Xcrc(t) + F(t,\Xcrc(t)) + Q \Df_x \log g(t,\Xcrc(t))\right] \df t + Q^{\fsqrt} \df W(t),
\end{align*}
where $g(t,x)=\rho(t,x; T,y)$. 
In \cite{Piepersethmacher24Classexponentialchangesmeasure}, it was shown that $\Xcrc$ equals in law the process $X$ under yet another exponential change of measure $\Pcrc$. 
Moreover, it was shown that the laws $\calLstr$ and $\calLcrc$ of $\Xstr$ and $\Xcrc$ are absolutely continuous on the path space $C([0,t];H)$ for any $t < T.$
Our main result in this work, presented in Theorem \ref{thm: absolute_continuity}, shows that the absolute continuity persists in the limit $t \uparrow T$. 
This facilitates $\calLcrc$ as a suitable proposal distribution for $\calLstr$, hence enabling a wide range of computational methods to sample from infinite-dimensional diffusion bridges. These include, for example, Markov chain Monte Carlo algorithms, importance sampling or variational methods. Algorithm \ref{alg: MH_sampler1} shows how  Theorem \ref{thm: absolute_continuity} can be used to define a Metropolis-Hastings algorithm for sampling diffusion bridges for SPDEs. We show numerical performance in stochastic reaction diffusion equations, including the Allen-Cahn equation.  
A crucial step towards the main result concerns the convergence of the guided process $\Xcrc(t)$ towards the conditioning point $y$ at an appropriate rate (Cf.\  Theorem \ref{thm: Xcrc_limit_behavior}). While the main steps in the  proof are similar to the proof of absolute continuity in \cite{vdMeulenBierkensSchauer2020Simulation}, complications arise from dealing with the infinite-dimensional setting. As it is unclear under which conditions transition densities exist, contrary to \cite{vdMeulenBierkensSchauer2020Simulation},  the proof of Theorem \ref{thm: absolute_continuity} avoids their existence. As we restrict to additive noise, we also obtain much ``cleaner'' conditions for absolute continuity to hold. Our examples show that Assumption \ref{eq: ass_Xcrc_limit_behavior}  can easily be verified in the diagonalisable case (Cf.\ Section \ref{sec: 5_examples}). The other assumption, Assumption \ref{ass: absolute_continuity},  is shown to hold under the easily verified conditions of Lemma \ref{lem: bounds_gh}.

\subsection{Outline}
The necessary preliminaries are outlined in Section \ref{sec: 2_prelims}. This contains a brief description on the derivation of the infinite-dimensional diffusion bridge and the guided process. 
In Section \ref{sec: 3_main_results} we state the main results of the paper and in Section \ref{sec: 4_MH_algorithm} we present a Metropolis-Hastings algorithm for sampling infinite-dimensional diffusion bridges.
We showcase the performance of this algorithm in numerical experiments in Section \ref{sec: 5_examples}. The proofs of our main results, Theorem \ref{thm: Xcrc_limit_behavior} and Theorem \ref{thm: absolute_continuity}, are given in Section \ref{sec: 6_proof_limit_behavior} and Section \ref{sec: 7_proof_abs_cont} respectively.

\subsection{Frequently used notation}
For the readers convenience we summarize here the general notation as well as processes and operators that are frequently used throughout this work. 

\paragraph{General}
We denote by $H$ a Hilbert space equipped with inner product $\langle x, y \rangle$ and norm $|x| = \sqrt{\langle x, x \rangle}$. 
For any linear, bounded operator $B: H \to H$ we let $\|B\|$ be the operator norm of $B$. If $B$ is a trace-class operator, we denote by $\tr[B]$ its trace and if $B$ is a Hilbert-Schmidt operator we write $\|B\|_{\text{HS}}$ for its Hilbert-Schmidt norm. Moreover, $B^*$ denotes the adjoint of $B$, and if $B$ is positive, $B^{\fsqrt}$ denotes the square root operator of $B$. 

The space $C([0,T];H)$ denotes the space of continuous, $H$-valued function, endowed with the supremum norm $\|\varphi\| = \sup_{t \in [0,T]} |\varphi(t)|$. Without further mention, we assume that all normed spaces are equipped with their corresponding Borel $\sigma$-Algebra.

For any arbitrary set $\mathcal{X}$ and two functions $f,g: \mathcal{X} \to \R$, we write $f(x) \lesssim g(x)$ if there exists some constant $C > 0$, independent of $x$, such that $f(x) \leq C g(x)$ for all $x$. If $\mathcal{X}$ equals $\R$ or $\N$ and $f$ and $g$ are asymptotically equivalent, we write $f \sim g$. 
The tuple $(\Om, \calF, (\calF_t)_{t \geq 0}, \P)$ denotes a stochastic basis and we let $\P_t$ be the restriction of $\P$ onto $\calF_t$. 

\paragraph{Operators and processes}
For any $k \geq 1$, the operator $L: H \to \R^k$ is a bounded, linear operator that we refer to as the observation operator.
The operator $(A,\dom(A))$ is the generator of a strongly continuous semigroup $(S_t)_{t \geq 0}$ on $H$. Moreover, $Q$ is a positive symmetric operator and $F$ a continuous nonlinearity, both acting on $H$ as well. The process $X$ denotes the mild solution to the SPDE \eqref{eq: semilin_spde}, whereas $Z$ is the Ornstein-Uhlenbeck process, i.e. the mild solution to \eqref{eq: semilin_spde} for the case that $F \equiv 0$.
By $Q_t$ we denote the covariance operator of $Z_t$, whereas $R_t = L Q_t L^*$ and $L_t = L S_t$. 
The process $\Xstr$ is the conditioned process $X$ given $L X_T = y$ and $\Xcrc$ the guided process, respectively derived from $X$ under the changes of measure $\Pstr$ and $\Pcrc$ on $\calF_T$. 

For the process $(X_t)_{t \in [0,T]}$ we denote by $\calL(X)$ the law of $X$ on the path space $C([0,T];H)$ and by $\calL_t(X)$ the restriction of its law on $C([0,t];H)$, whereas $\calL(X_t)$ is the law of $X_t$ on $H$.
Finally, for the measures $\calL(\Xstr)$ and $\calL(\Xcrc)$ we shorten notation and simply write $\calLstr$ and $\calLcrc$ instead.

\section{Preliminaries}
\label{sec: 2_prelims}

\subsection{Mild solutions}
\label{subsec: 2_mild_solutions}

The following is a standing assumption on the components involved in Equation \eqref{eq: semilin_spde}.
\begin{assumption} ~
\label{ass: basic_assumptions_SPDE}
    \begin{itemize}
        \item[(i)] $A$ is the generator of a strongly continuous semigroup $(S_t)_{ t \geq 0}$ on $H$. In particular there exists a $C_S > 0, \om_S \in \R$ such that $\|S_t\| \leq C_S \exp(\om_S t)$ for all $t \geq 0.$
        \item[(ii)] $W$ is a cylindrical Wiener process on $H$. 
        \item[(iii)] $Q$ is a symmetric, positive and bounded operator on $H$. Moreover, the operators $(Q_t)_{t \geq 0}$ given by
        \begin{align}
        \label{eq: def_Q(t)}
            Q_t = \int_0^t S_u Q S^*_u \, \df u 
        \end{align}
        are well-defined and trace-class.
        \item[(iv)] $F$ is such that there exists a constant $C_{F} > 0$ with 
        \begin{align*}
                    | F(t,x) - F(t,y) | \leq C_{F} | x- y | \text{ and }         | F(t,x) | \leq C_{F} 
        \end{align*}
        for all $t \in [0,T]$ and $ x,y \in H.$
    \end{itemize}
\end{assumption}

Under Assumption \ref{ass: basic_assumptions_SPDE}, Equation \eqref{eq: semilin_spde} admits, for any initial value $x_0 \in H$, a unique mild solution $X$ that satisfies
\begin{align}
\label{eq: mild_sol}
    X_t = S_t x_0 + \int_0^t S_{t-s}F(s,X_s) \, \df s + \int_0^t S_{t-s} Q^{\fsqrt} \, \df W_s, \quad t \geq 0.
\end{align}
The process $X$ is Markovian and has an almost surely continuous modification. 
In the special case that $F = 0$ we denote the time homogeneous mild solution to equation \eqref{eq: semilin_spde} by $Z$ and refer to it as the \textit{Ornstein-Uhlenbeck (OU) process}. The Ornstein-Uhlenbeck process is a Gaussian process and, in particular, for any $s < t$ and $x \in H$, the distribution of $Z_t \mid Z_s = x$ is Gaussian with mean $S_{t-s} x$ and covariance operator $Q_{t-s}$.
Moreover, this implies that the distribution of  $L Z_t \mid Z_s =x$ is  Gaussian in $\R^k$ with mean $L_{t-s}x$ and covariance matrix $R_{t-s}$, with operators $L_t$ and $R_t$ defined by 
\begin{align}
\begin{split}
\label{eq: L_t,R_t} 
L_{t} &= L S_{t},\\
R_{t} &= L Q_{t} L^*, \quad t \in [0,T].
\end{split}
\end{align}

\subsection{Derivation of the diffusion bridge and guided process}
\label{subsec: 2_expchangemeasure}
For the reader's convenience, we give an outline of the derivation of the infinite-dimensional diffusion bridge $\Xstr$ and the guided process $\Xcrc$ based on an exponential change of measure as introduced in \cite{Piepersethmacher24Classexponentialchangesmeasure}. The details of this subsection's proofs can be found in Appendix \ref{app: A}.

For this, let $(T_t)_{t \geq 0}$ denote the transition semigroup 
\begin{align*}
    (T_t \varphi)(s,x) &= \E[\varphi(s+t,X_{t+s}) \mid X_s = x] 
\end{align*}
defined on the Banach space $C_m(\R_+ \times H)$ of continuous functions $\varphi: \R_+ \times H \rightarrow \R$ such that $\|\varphi\|_m = \sup_{t,x} (1 + \|x\|^m)^{-1} | \varphi(t,x)| < \infty.$ 
A sequence $(\varphi_n)_n \subset C_m(\R_+ \times H)$ is said to converge to $\varphi \in C_m(\R_+ \times H)$ in the topology of \textit{bounded-pointwise} convergence if $\lim_n \varphi_n(t,x) = \varphi(t,x)$ for any $t \geq 0, x \in H$ and $\sup_n \|\varphi_n\|_m < \infty.$ We then write $\pilim_n \varphi_n = \varphi.$

In the topology of bounded-pointwise convergence, define the infinitesimal generator $(K, \dom_m(K))$ of $(T_t)_t$ via
\begin{align}
\label{eq: def_inf_gen}
\begin{split}
\begin{cases}
    \dom_m(K) &= \left\{ \varphi \in C_m(\R_+ \times H): \exists \psi \in C_m(\R_+ \times H) \text{ s.t. } \pilim_{t \downarrow 0} \dfrac{T_t \varphi - \varphi}{t} = \psi \right\} \\
    (K \varphi)(s,x) &= \lim_{t \downarrow 0} \dfrac{(T_t \varphi)(s,x) - \varphi(s,x)}{t}, \quad \varphi \in \dom_m(K), (s,x) \in \R_+ \times H.
\end{cases}
\end{split}
\end{align}

For any positive function $h \in \dom_m(K)$, define the process 
\begin{align}
\label{eq: E^h_t}
        E^h_t = \dfrac{h(t,X_t)}{h(0,x_0)} \exp\left( - \int_0^t \dfrac{K h}{h}(s,X_s) \, \df s \right).
\end{align}
It can be shown that $E^h$ is a continuous local $\P$-martingale whenever it exists, see for example Lemma 3.1 in \cite{PalmowskiRoski2002technique}.
If $E^h$ is a true $\P$-martingale, it defines an \textit{exponential change of measure} $\P^h$ on $\calF_T$. Moreover, under additional assumptions on $h$, the change of measure $\P^h$ is of Girsanov-type, as was shown in the following theorem. 

\begin{theorem}[Theorem 3.5, \cite{Piepersethmacher24Classexponentialchangesmeasure}]
\label{thm: htransform}
Let $h: [0,T) \times H \longrightarrow \R_{>0}$ satisfy the following for any $S < T$:
\begin{itemize}
    \item[(i)] $h \in C_m([0,S] \times H)$ such that $K h(t,x)$ exists for any $[0,S] \times H$ and $h^{-1} K h \in C_m([0,S] \times H)$.
    \item[(ii)] $h$ is such that the process $(E^h_t)_{t \in [0,S]}$ is a $\P$-martingale. 
\end{itemize}
Then $h$ defines a unique change of measure $\P^h$ on $(\Om, \calF_T)$ via 
\begin{align*}
    \df \P^h_t = \E^h_t \df \P_t, \quad t < T.
\end{align*}
Additionally, let $h$ satisfy the following:
\begin{itemize}
    \item[(iii)] $h$ is Fréchet differentiable in $x$ such that $\Df_x h \in C_m([0,S] \times H;H)$ for any $S < T$.
\end{itemize}
Then $X$ under $\P^h$ is a mild solution to the SPDE
\begin{align*}
    \df X_t = \left[ A X_t + F(t,X_t) + Q \Df_x \log h(t,X_t) \right] \df t + Q^{\fsqrt} \df W^h_t, \quad t \in [0,T),
\end{align*}
where $W^h$ is a $\P^h$-cylindrical Wiener process.
\end{theorem}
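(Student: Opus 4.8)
The plan is to establish the three assertions in turn: that $E^h$ generates a consistent, hence unique, family of measures $\P^h_t$; that under assumption (iii) the density $E^h$ is in fact a Dol\'eans--Dade exponential of an explicit stochastic integral; and that the associated Girsanov shift converts the mild solution \eqref{eq: mild_sol} into the stated bridge SPDE. First I would settle the existence of $\P^h$. Since $h \in \dom_m(K)$, the cited result of \cite{PalmowskiRoski2002technique} makes $E^h$ a nonnegative continuous local $\P$-martingale with $E^h_0 = h(0,x_0)/h(0,x_0) = 1$, and assumption (ii) upgrades this to a true martingale on each $[0,S]$, $S<T$. Hence every $E^h_t$ is a genuine density: $\P^h_t(B) = \E[\mathbf{1}_B E^h_t]$, $B \in \calF_t$, defines a probability measure on $\calF_t$ with $\P^h_t \ll \P_t$. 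The martingale property is precisely the consistency relation $\P^h_t\big|_{\calF_s} = \P^h_s$ for $s \le t < T$, since for $B \in \calF_s$ one has $\E[\mathbf{1}_B E^h_t] = \E[\mathbf{1}_B\,\E[E^h_t\mid\calF_s]] = \E[\mathbf{1}_B E^h_s]$. A Carath\'eodory/monotone-class extension then produces a unique measure on the $\sigma$-algebra generated by $\bigcup_{t<T}\calF_t$ agreeing with every $\P^h_t$; uniqueness is immediate from consistency since this algebra is generating. (The one delicate point is that $h$, and thus $E^h$, need not extend to $t=T$, so $\P^h$ on $\calF_T$ is to be understood through its restrictions to $t<T$ rather than through a single density at time $T$.)

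The heart of the matter is to rewrite $E^h$ as a stochastic exponential under assumption (iii). Writing $g = \log h$, the chain rule for the second-order operator $K$ yields the pointwise identity
\begin{equation*}
K g = \frac{Kh}{h} - \frac{1}{2}\,\bigl|Q^{\fsqrt}\Df_x g\bigr|^2,
\end{equation*}
the extra quadratic term arising from the second-order part $\tfrac12\tr[Q\Df_x^2\,\cdot\,]$ of $K$, while its drift-and-time part acts as a derivation. Under (iii) the Dynkin decomposition of $g(t,X_t)$ reads $g(t,X_t) = g(0,x_0) + \int_0^t Kg(s,X_s)\,\df s + M_t$ with martingale part the It\^o integral $M_t = \int_0^t\langle Q^{\fsqrt}\Df_x g(s,X_s),\df W_s\rangle$. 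Inserting this together with the displayed identity into the definition \eqref{eq: E^h_t} of $E^h$ collapses the drift terms, giving $\log E^h_t = M_t - \tfrac12\langle M\rangle_t$, that is,
\begin{equation*}
E^h_t = \exp\!\left(\int_0^t \bigl\langle Q^{\fsqrt}\Df_x\log h(s,X_s),\df W_s\bigr\rangle - \frac{1}{2}\int_0^t \bigl|Q^{\fsqrt}\Df_x\log h(s,X_s)\bigr|^2\,\df s\right).
\end{equation*}

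With this representation $E^h = \mathcal{E}(M)$ in hand the conclusion follows quickly. The infinite-dimensional Girsanov theorem shows that, under $\P^h$, the process $W^h_t = W_t - \int_0^t Q^{\fsqrt}\Df_x\log h(s,X_s)\,\df s$ is a cylindrical Wiener process on each $[0,S]$. Substituting $\df W_s = \df W^h_s + Q^{\fsqrt}\Df_x\log h(s,X_s)\,\df s$ into the stochastic convolution in \eqref{eq: mild_sol} and using $Q^{\fsqrt}Q^{\fsqrt} = Q$ turns the correction term into $\int_0^t S_{t-s}\,Q\,\Df_x\log h(s,X_s)\,\df s$, which is exactly the mild form of the claimed SPDE with drift $A X_t + F(t,X_t) + Q\Df_x\log h(t,X_t)$ and driving noise $W^h$.

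The main obstacle is the step producing the Dynkin decomposition of $g(t,X_t)$ and, in particular, identifying its martingale part as that specific It\^o integral: $X$ is only a \emph{mild} solution and hence not an $H$-valued semimartingale to which the classical It\^o formula applies, as $X_t$ need not lie in $\dom(A)$. I would handle this either through a mild It\^o formula of the Da Prato--Jentzen--R\"ockner type, or by replacing $A$ with its Yosida approximation $A_n = nA(n-A)^{-1}$, applying the ordinary It\^o formula to the resulting strong solutions $X^n$, and passing to the limit; the growth and continuity bounds encoded in the requirements $g,\, h^{-1}Kh,\, \Df_x h \in C_m$ are what guarantee convergence of the drift and stochastic-integral terms. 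A secondary technical point is to verify the integrability $\int_0^S |Q^{\fsqrt}\Df_x\log h(s,X_s)|^2\,\df s < \infty$ almost surely, which, together with the martingale property assumed in (ii), licenses the cylindrical Girsanov theorem on each $[0,S]$ before letting $S\uparrow T$.
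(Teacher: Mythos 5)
First, a caveat: the paper does not prove this statement itself — it is imported verbatim as Theorem 3.5 of \cite{Piepersethmacher24Classexponentialchangesmeasure}, so there is no in-paper proof to compare against. Judged on its own terms, your proposal follows the standard Palmowski--Rolski/Girsanov route on which such results are built, and the overall architecture (consistency of the family $(\P^h_t)_{t<T}$ via the martingale property, stochastic-exponential representation of $E^h$, cylindrical Girsanov theorem on each $[0,S]$, substitution into the mild formulation) is the right one. You also correctly isolate the genuine technical obstruction, namely that $X$ is only a mild solution, so the Dynkin/It\^o decomposition must be obtained through a mild It\^o formula or a Yosida approximation argument.

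The one step I would not accept as written is the chain-rule identity $K(\log h) = h^{-1}Kh - \tfrac12|Q^{\fsqrt}\Df_x\log h|^2$. The generator $K$ is defined abstractly through bounded-pointwise convergence of the transition semigroup, and the hypotheses give you only that $h\in\dom_m(K)$ and that $h$ is once Fr\'echet differentiable; nothing guarantees that $\log h$ lies in $\dom_m(K)$, nor that $K$ acts on it as a second-order differential operator, which is exactly what the quadratic correction term presupposes. The standard repair avoids logarithms entirely: from $h\in\dom_m(K)$ one gets that $h(t,X_t)-\int_0^t Kh(s,X_s)\,\df s$ is a local martingale, assumption (iii) identifies its martingale part as $\int_0^t\langle \Df_x h(s,X_s), Q^{\fsqrt}\,\df W_s\rangle$, and the It\^o product rule applied to $E^h_t$ then yields $\df E^h_t = E^h_t\,\df M_t$ with $M_t=\int_0^t\langle Q^{\fsqrt}\Df_x\log h(s,X_s),\df W_s\rangle$, i.e. $E^h=\mathcal{E}(M)$, without ever applying $K$ to $\log h$. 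With that substitution your Girsanov step and the verification of the mild formulation go through as you describe.
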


The diffusion bridge $\Xstr$ and corresponding guided process $\Xcrc$ are defined through an application of Theorem \ref{thm: htransform} for different choices of the function $h$ as follows. 
Let $y \in \R^k$ and $T>0$ be fixed. For any $x \in H$ and $t \in [0,T)$, denote by $\rho_Z(t,x;T,y)$ the density of $L Z_T \mid Z_t = x$ with respect to the Lebesgue measure on $\R^k$, evaluated at $y$. As stated in Section \ref{subsec: 2_mild_solutions}, $\rho_Z(t,x;T,y)$ is Gaussian in $y$ with mean $L_{T-t}x$ and covariance matrix $R_{T-t}$ defined in \eqref{eq: L_t,R_t}.

Moreover, let $\rho_X(t,x;T,y)$ denote the density of $L X_T \mid X_t = x$, evaluated at $y$. The existence of $\rho_X(t,x;T,y)$ is a consequence of the Girsanov theorem, from which the absolute continuity of $\calL(X_T)$ with respect to $\calL(Z_T)$, and hence of $\calL(LX_T)$ with respect to $\calL(LZ_T)$ follows.

On $[0,T) \times H$, define the functions $h$ and $g$ by
\begin{align}
\label{eq: def_h_g}
\begin{split}
  h(t,x) &= \rho_X(t,x;T,y), \\
  g(t,x) &= \rho_Z(t,x;T,y).
\end{split}
\end{align}

\begin{remark}
The function $h(t,x) = \rho_X(t,x;T,y)$ is only defined $\calL(LX_T)$-almost surely in $y$, i.e. there exists a Borel set $\calA_0$ such that $\P( L X_T \in \calA_0) = 1$ and $(t,x,y) \mapsto \rho_X(t,x;T,y)$ is uniquely well-defined for all $t \in [0,T), x \in H$ and $y \in \calA_0$. The existence of such a set, independently of $(t,x)$, follows from the Girsanov theorem and the continuity of $(t,x) \mapsto LZ_T \mid Z_t = x$.
Throughout the remainder of this work, we assume without further notice that $y \in \calA_0$.
\end{remark}

The following two propositions verify that $h$ and $g$ satisfy the assumptions of Theorem \ref{thm: htransform}, and consequently, prove the existence of $\Xstr$ and $\Xcrc$.

\begin{proposition}[Existence of the diffusion bridge] 
\label{prop: existence_diffusion_bridge}
The mapping $h$ defined in \eqref{eq: def_h_g} satisfies Assumptions $(i)$ and $(ii)$ of Theorem \ref{thm: htransform} with $K h = 0$. Moreover, the measure $\Pstr$ defined on $\calF_T$ by
\begin{align*}
    \df \Pstr_t = \frac{h(t,X_t)}{h(0,x_0)} \df \P_t, \quad t < T,
\end{align*}
is such that, for any bounded and measurable function $\varphi$ and $0 \leq t_1 \leq ... \leq t_n < T$, it holds
\begin{align*}
    \E^{\star}[\varphi(X_{t_1},...,X_{t_n})] = \E[ \varphi(X_{t_1},...,X_{t_n}) \mid L X_T = y], \quad \calL(LX_T)-\text{a.s.}
\end{align*}
We call the process $X$ under $\Pstr$ the infinite-dimensional diffusion bridge (of $X$ given $LX_T = y$).
\end{proposition}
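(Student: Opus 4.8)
The plan is to verify the two hypotheses of Theorem \ref{thm: htransform} for the specific choice $h(t,x)=\rho_X(t,x;T,y)$, and then to identify the resulting change of measure with the conditional law. The claim $Kh=0$ is the natural starting point: since $h(t,x)=\rho_X(t,x;T,y)=\E[\,\mathbbm{1}\{LX_T=y\}\mid X_t=x\,]$ in the heuristic density sense, the map $(t,x)\mapsto h(t,X_t)$ should be a space-time harmonic function for the generator $K$, precisely because conditional densities propagate consistently under the Markov transition semigroup. Concretely, I would show $h(t,x)=(T_{s-t}h(s,\cdot))(t,x)$ for $t\le s<T$ by the tower property for the Markov process $X$, so that $t\mapsto h(t,X_t)$ is a martingale; this is exactly the statement that $h\in\dom_m(K)$ with $Kh=0$, giving hypothesis $(i)$. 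One must of course also check the regularity/integrability bundled into $(i)$, namely that $h$ and $h^{-1}Kh$ lie in $C_m([0,S]\times H)$ for every $S<T$; with $Kh=0$ the latter is trivial, so the content is the $C_m$-membership and growth control of $h$ itself.

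For hypothesis $(ii)$, I need $E^h_t$ to be a genuine $\P$-martingale on $[0,S]$ for each $S<T$, not merely a local martingale. Since $Kh=0$, the exponential process collapses to $E^h_t=h(t,X_t)/h(0,x_0)$, which is automatically a nonnegative local martingale and hence a supermartingale; upgrading to a true martingale reduces to showing the expectation is constant, i.e.\ $\E[h(t,X_t)]=h(0,x_0)$ for each $t<S$. This should follow either from a uniform integrability argument on $[0,S]$ using the $C_m$ growth bound together with moment bounds on $\|X_t\|$ for the mild solution, or more directly from the consistency identity above evaluated at $t=0$. I would lean on the absolute continuity of $\calL(LX_T)$ with respect to $\calL(LZ_T)$ (noted in the excerpt as a consequence of Girsanov) to control $\rho_X$ by the tractable Gaussian density $\rho_Z$ and thereby bound moments.

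Having established that $\Pstr$ is a well-defined probability measure on $\calF_T$ via $\df\Pstr_t=E^h_t\,\df\P_t$, the final step is the identification with the conditional law. The plan is to compute, for a bounded measurable $\varphi$ and times $0\le t_1\le\dots\le t_n<T$,
\begin{align*}
\E^{\star}[\varphi(X_{t_1},\dots,X_{t_n})]
=\E\!\left[\varphi(X_{t_1},\dots,X_{t_n})\,\frac{h(t_n,X_{t_n})}{h(0,x_0)}\right],
\end{align*}
where I have used the martingale property to push the Radon--Nikodym density back to time $t_n$ (optional stopping / tower property). Then I would rewrite $h(t_n,X_{t_n})=\rho_X(t_n,X_{t_n};T,y)$ as the conditional density of $LX_T$ given $\calF_{t_n}$, and apply the disintegration (Bayes) formula to recognise the right-hand side as $\E[\varphi(X_{t_1},\dots,X_{t_n})\mid LX_T=y]$, valid for $\calL(LX_T)$-almost every $y$. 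The main obstacle I anticipate is the martingale upgrade in hypothesis $(ii)$ together with the careful handling of the almost-everywhere-in-$y$ nature of $\rho_X$: because $h$ is only defined up to a $\calL(LX_T)$-null set in $y$, the disintegration identity and the constancy of $\E[h(t,X_t)]$ must be stated and proved for $y$ in the set $\calA_0$, and one must ensure all the manipulations respect this qualification uniformly in $(t,x)$. Establishing the existence of such a version of $\rho_X$, jointly measurable and with the required growth, is the technically delicate point and is where I would invest the most care.
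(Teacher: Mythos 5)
Your proposal follows essentially the same route as the paper's proof: the space-time harmonicity $h(s,x)=\E[h(s+t,X_{s+t})\mid X_s=x]$ is established via the Markov/tower property (the paper does this by integrating $\P(LX_T\in\calA\mid X_s=x)$ over the intermediate state), which simultaneously yields $Kh=0$ and the martingale property of $E^h_t=h(t,X_t)/h(0,x_0)$, and the identification with the conditional law is obtained by verifying the defining disintegration identity $\E[\E^{LX_T}[\varphi]\mathbbm{1}_{\{LX_T\in\calA\}}]=\E[\varphi\,\mathbbm{1}_{\{LX_T\in\calA\}}]$ against Borel sets $\calA$, exactly as you outline with the Bayes/Fubini argument and the $\calL(LX_T)$-a.e.\ qualification in $y$. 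The approach and the anticipated delicate points (the a.e.-in-$y$ version of $\rho_X$, handled in the paper by the remark on the set $\calA_0$) match the paper's treatment.
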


\begin{remark}
If, in addition to Assumption (i) and (ii) of Theorem \ref{thm: htransform}, $h$ satisfies Assumption (iii), $X$ under $\Pstr$ is a mild solution to the diffusion bridge equation 
\begin{align}
\label{eq: dXstr_t}
    \df \Xstr_t = \left[ A \Xstr_t + F(t,\Xstr_t) + Q \Df_x \log h(t, \Xstr_t) \right] \df t + Q^{\fsqrt} \df \Wstr_t, \quad t \in [0,T),
\end{align}
for some $\Pstr$-cylindrical Wiener process $\Wstr$. This generalizes the diffusion bridge equation known for conditioned diffusions in Euclidean spaces.

However, we do not impose this assumption on $h$ for two reasons. Firstly, verifying the Fréchet differentiablity of $h$ is a difficult problem in this infinite-dimensional setting. If the state space is Euclidean, this assumption can be verified based on regularity results for transition densities of diffusion processes. To the best of our knowledge, no such results are known for SPDEs. 

Secondly, except for a few special cases, the term $\Df_x \log h$ is intractable. This renders a direct simulation of the bridge process infeasible, even if it satisfies Equation \eqref{eq: dXstr_t}. This motivates the construction of the guided process as a tractable substitute process in the next proposition. 
\end{remark}

\,

\begin{proposition}[Existence of the guided process]
\label{prop: existence_guided_process}
The mapping $g$ defined in \eqref{eq: def_h_g} satisfies Assumptions $(i)$ to $(iii)$ of Theorem \ref{thm: htransform} with $K g(t,x) = \langle F(t,x), \Df_x g(t,x) \rangle.$
In particular, there exists a unique measure $\Pcrc$ on $\calF_T$, defined by 
\begin{align*}
    \df \Pcrc_t &= \dfrac{g(t,X_t)}{g(0,x_0)} \exp\left( - \int_0^t \langle F(s,X_s), \Df_x \log g(s, X_s) \rangle \, \df s \right) \df \P_t, \quad t < T,
\end{align*} 
such that $X$ under $\Pcrc$ is the unique mild solution to the SPDE
\begin{align}
\label{eq: dXcrc}
        \df \Xcrc_t = \left[ A \Xcrc_t + F(t,\Xcrc_t) + Q \Df_x \log g(t, \Xcrc_t) \right] \df t + Q^{\fsqrt} \df \Wcrc_t, \quad t \in [0,T),
\end{align} 
where $(\Wcrc_t)_{t \in [0,T)}$ is a $\Pcrc$-cylindrical Wiener process. The process $X$ under $\Pcrc$ is referred to as the guided process.
\end{proposition}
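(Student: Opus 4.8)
The plan is to verify the three conditions of Theorem~\ref{thm: htransform} for the function $g(t,x)=\rho_Z(t,x;T,y)$, and to identify its generator action $Kg$. Since $LZ_T\mid Z_t=x$ is Gaussian in $\R^k$ with mean $L_{T-t}x$ and covariance $R_{T-t}$, the function $g$ has the explicit Gaussian form
\begin{align*}
  g(t,x) = (2\pi)^{-k/2}\,(\det R_{T-t})^{-1/2}\exp\!\left(-\tfrac12\,\langle y - L_{T-t}x,\,R_{T-t}^{-1}(y-L_{T-t}x)\rangle_{\R^k}\right).
\end{align*}
First I would use this closed form to establish the regularity in $(i)$ and $(iii)$. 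The exponent is a smooth (indeed analytic) function of $x$ through the finite-dimensional linear map $L_{T-t}x=LS_{T-t}x$, so Fréchet differentiability in $x$ is immediate, with $\Df_x g(t,x) = g(t,x)\,L_{T-t}^* R_{T-t}^{-1}(y-L_{T-t}x)$. The growth bounds needed for membership in the weighted spaces $C_m([0,S]\times H)$ follow because on each interval $[0,S]$ with $S<T$ the covariance $R_{T-t}$ stays uniformly nondegenerate (bounded away from singularity since $T-t\ge T-S>0$), and $g$ together with $\Df_x g$ and $h^{-1}Kh$ grows at most polynomially in $|x|$; the operator norm of $L_{T-t}$ is controlled by Assumption~\ref{ass: basic_assumptions_SPDE}(i).

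The heart of the argument is computing $Kg$ and showing it equals $\langle F(t,x),\Df_x g(t,x)\rangle$. The key observation is that $g$ is the conditional density associated with the \emph{Ornstein--Uhlenbeck} process $Z$, i.e.\ with the equation having $F\equiv0$. By the Kolmogorov backward equation for $Z$, the function $g$ is space-time harmonic for the OU generator $K_Z$, meaning $K_Z g = 0$ (this is precisely the analogue of the statement $Kh=0$ in Proposition~\ref{prop: existence_diffusion_bridge}, but now for the OU transition semigroup rather than that of $X$). The generator $K$ of the semigroup $(T_t)$ associated with the full equation~\eqref{eq: semilin_spde} differs from $K_Z$ only through the drift term $F$: formally $K\varphi = K_Z\varphi + \langle F(t,x),\Df_x\varphi\rangle$ on functions $\varphi$ that are Fréchet differentiable in $x$. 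Applying this to $\varphi=g$ and using $K_Z g=0$ yields exactly $Kg(t,x)=\langle F(t,x),\Df_x g(t,x)\rangle$. I would make this rigorous within the bounded-pointwise framework of \eqref{eq: def_inf_gen} by decomposing the increment $T_t g - g$ using the mild formulation of $X$ versus $Z$, controlling the $F$-contribution via Assumption~\ref{ass: basic_assumptions_SPDE}(iv) (boundedness and Lipschitz continuity of $F$) and the Fréchet differentiability of $g$.

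The second condition, $(ii)$, requires that the exponential process $E^g$ defined in \eqref{eq: E^h_t} with this $Kg$ is a genuine $\P$-martingale on each $[0,S]$, $S<T$. Since $E^g$ is always a continuous local martingale by the cited Lemma~3.1 of \cite{PalmowskiRoski2002technique}, it suffices to verify a uniform integrability or Novikov-type criterion on $[0,S]$. Here I would exploit the boundedness of $F$: on $[0,S]$ the integrand $\langle F(s,X_s),\Df_x\log g(s,X_s)\rangle$ is controlled because $|F|\le C_F$ and $\Df_x\log g(s,x)=L_{T-s}^*R_{T-s}^{-1}(y-L_{T-s}x)$ has at most linear growth in $|x|$ with coefficients bounded uniformly on $[0,S]$; combined with the finite moments of $\sup_{s\le S}|X_s|$ this gives the required integrability. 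The main obstacle I anticipate is the rigorous justification of the generator computation $Kg = \langle F,\Df_x g\rangle$ within the bounded-pointwise topology on $C_m$, rather than the formal drift-perturbation heuristic: one must show the limit defining $K g$ exists in the $\pilim$ sense and produces a function again in $C_m([0,S]\times H)$, which requires careful uniform control of the Gaussian density and its derivatives near — but bounded away from — the terminal time $T$. Once $(i)$--$(iii)$ are in place, Theorem~\ref{thm: htransform} delivers both the change of measure $\Pcrc$ with the stated Radon--Nikodym density and the SPDE~\eqref{eq: dXcrc} satisfied by $X$ under $\Pcrc$, with uniqueness of the mild solution following from Assumption~\ref{ass: basic_assumptions_SPDE} and the Lipschitz-type regularity of the added drift $Q\,\Df_x\log g$ on $[0,S]$.
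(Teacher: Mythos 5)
Your proposal follows essentially the same route as the paper's proof in Appendix~A: verify (iii) from the explicit Gaussian form of $g$, decompose the generator as $K = \tilde K + \langle F(t,x),\Df_x\,\cdot\,\rangle$ where $\tilde K$ is the Ornstein--Uhlenbeck generator, use space-time harmonicity ($\tilde K g = 0$, by the same argument as for $h$ in Proposition~\ref{prop: existence_diffusion_bridge}) to get $Kg = \langle F,\Df_x g\rangle$, and then establish the martingale property from the linear growth/Lipschitz structure of $\Df_x\log g$ on $[0,S]$. The only differences are in bookkeeping: the paper cites Theorem~4.1 of \cite{manca2009fokker} for the drift-perturbation identity you propose to prove directly, and invokes Lemma~3.6 of \cite{Piepersethmacher24Classexponentialchangesmeasure} (Lipschitz drift) where you sketch a Novikov-type criterion.
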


The proofs of Proposition \ref{prop: existence_diffusion_bridge} and Proposition \ref{prop: existence_guided_process} can be found in Appendix \ref{app: A}.

\begin{remark}
To highlight the fact that the process $X$ under $\Pcrc$ solves the SPDE \eqref{eq: dXcrc}, we write from now on, with slight abuse of notation, $\Xcrc$ instead of $X$ whenever we assume it to be defined on the stochastic basis $(\Om, \calF, (\calF_t)_t, \Pcrc)$.
Likewise, we write $\Xstr$ for the process $X$ defined under the measure $\Pstr$.
Furthermore, we denote by $\calLstr$ and $\calLcrc$ the law of $X$ on $C([0,T];H)$ under $\Pstr$ and $\Pcrc$ respectively and by $\calLstr_t$ and $\calLcrc_t$ their restrictions onto $C([0,t];H)$ for any $t < T$.
\end{remark}

Contrary to the function $h$, the function $g$ is available in closed form with 
\begin{align*}
    g(t,x) = \rho_Z(t,x;T,y) &= \dfrac{1}{\sqrt{(2 \pi)^k \det(R_{T-t})}} \exp \left( - \dfrac{1}{2} | R_{T-t}^{-\fsqrt} (y - L_{T-t} x) |^2 \right).
\end{align*}
Hence, the additional drift term $G(t,x) = \Df_x \log g(t,x)$ appearing in Equation \eqref{eq: dXcrc} is given by 
\begin{align}
\label{eq: G}
G(t,x) = L^*_{T-t} R^{-1}_{T-t} (y - L_{T-t} x).
\end{align}  
In particular, this enables us to draw samples directly from $\Xcrc$ by forward simulating the SPDE \eqref{eq: dXcrc}.

\section{Main results}
\label{sec: 3_main_results}
Throughout this section we fix some $T > 0$ and $y \in \R^k$ and let $\Xstr$ be the infinite-dimensional diffusion bridge and $\Xcrc$ be the guided process, respectively defined as the process $X$ under the changes of measure $\Pstr$ and $\Pcrc$ as introduced in Propositions \ref{prop: existence_diffusion_bridge} and \ref{prop: existence_guided_process}.

By construction, the measures $\Pstr$ and $\Pcrc$ are absolutely continuous on $\calF_t, t < T,$ with
\begin{align}
\label{eq: Pstrt_Pcrc_t}
        \dfrac{\df \Pstr_t}{\df \Pcrc_t}(\Xcrc) = \frac{h(t,\Xcrc_t)}{g(t,\Xcrc_t)} \frac{g(0,x_0)}{h(0,x_0)} \Psi_t(\Xcrc)  \quad \Pcrc\text{-a.s.}
\end{align}
where the process $\Psi_t(\Xcrc)$ is given by 
\begin{align}
\label{eq: Psi(t)}
\Psi_t(\Xcrc) = \exp \left( \int_0^t  \Bigl \langle F(s,\Xcrc_s),  G(s,\Xcrc_s)\Bigr \rangle \, \df s \right).
\end{align}

Our main results come in two parts. Firstly, we show that the guided process $\Xcrc(t)$ converges to the conditioning set $\{x \in H : L x = y \}$. Besides the convergence in itself, it is crucial that this convergence occurs at an appropriate rate.

Secondly, we show that the absolute continuity in \eqref{eq: Pstrt_Pcrc_t} persists as we take the limit $t \uparrow T$, i.e. that the measures $\Pstr$ and $\Pcrc$ are equivalent on $\calF_T$. As a consequence thereof, the laws $\calLstr$ and $\calLcrc$ are equivalent on the path space $C([0,T];H)$.
This is critical for the guided process to serve as a valid proposal distribution for the infinite-dimensional diffusion bridge.

\subsection{Limit behavior of the Guided Process towards the conditioning time}
\label{subsec: 3_1_limit_behavior}

We first study the limit behavior of $\Xcrc(t)$ as $t \uparrow T.$ 
Let the following hold. 
\begin{assumption}
\label{ass: Xcrc_limit_behavior}
There exist positive constants $\underbar{c}, \bar{c}$ such that for all $t \in (0,T]$
\begin{align}
\begin{split}
\label{eq: ass_Xcrc_limit_behavior}
\underbar{c} ~ t^{-1} \leq \|R_{t}^{-1}\| &\leq \bar{c} ~t^{-1}.
\end{split}
\end{align}
\end{assumption}

We get the following convergence result for the guided process $\Xcrc$. 
\begin{theorem} 
\label{thm: Xcrc_limit_behavior}
Under Assumptions \ref{ass: basic_assumptions_SPDE} and \ref{ass: Xcrc_limit_behavior}, there exists a constant $M > 0$ such that 
\begin{align}
\label{eq: Xcrc_limit_behavior}
    \limsup_{t \uparrow T} \frac{| y- L_{T-t} \Xcrc_t|}{\sqrt{(T-t) \ln(1/(T-t))}} \leq M \quad \Pcrc\text{-a.s.}
\end{align}

\end{theorem}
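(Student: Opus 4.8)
### Proof Proposal

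The plan is to control the quantity $y - L_{T-t}\Xcrc_t$ by exploiting the explicit form of the guiding drift $G$ from \eqref{eq: G}, which is precisely engineered to steer the process toward the conditioning value. Writing $Y_t := y - L_{T-t}\Xcrc_t \in \R^k$, I would first derive a stochastic differential equation (or an integral identity on the mild-solution level) for $Y_t$. Two sources of time-dependence enter: the explicit dependence of $L_{T-t}$ on $t$ through the semigroup, and the evolution of $\Xcrc_t$ itself via \eqref{eq: dXcrc}. The key structural observation is that the dominant contribution to the drift of $Y_t$ should be a mean-reverting term of the form $-R_{T-t}^{-1}$ applied to $Y_t$ (this is the hallmark of the linear-Gaussian bridge, and it is what Assumption \ref{ass: Xcrc_limit_behavior} is tailored to exploit), while the nonlinearity $F$, being uniformly bounded by Assumption \ref{ass: basic_assumptions_SPDE}(iv), contributes only a lower-order perturbation.

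Concretely, I would aim to show that $Y_t$ satisfies a representation in which its squared norm $|Y_t|^2$ obeys an estimate where the strong mean-reversion rate $\|R_{T-t}^{-1}\| \asymp (T-t)^{-1}$ dominates. Applying Itô's formula to $|Y_t|^2$ (in $\R^k$, so this is finite-dimensional and clean once $Y_t$ is identified as a finite-dimensional semimartingale), the drift of $|Y_t|^2$ should contain a strongly negative term of order $(T-t)^{-1}|Y_t|^2$ coming from the mean reversion, a bounded-noise quadratic variation term of order $\tr[R_{T-t}^{-1}\text{-type quantities}]$ from the diffusion, and cross terms involving $F$ that are controlled using boundedness of $F$ and Cauchy--Schwarz. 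The two-sided bound in \eqref{eq: ass_Xcrc_limit_behavior} is what converts this into usable rate information: the upper bound $\|R_{T-t}^{-1}\| \leq \bar c (T-t)^{-1}$ controls the magnitude of the driving terms, while the lower bound guarantees the mean reversion is genuinely strong enough to suppress $|Y_t|^2$ at the right rate.

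The target rate $\sqrt{(T-t)\ln(1/(T-t))}$ signals that the final step is an application of a law-of-the-iterated-logarithm-type argument, analogous to the finite-dimensional treatment in \cite{vdMeulenBierkensSchauer2020Simulation}. I would reduce the martingale part of $Y_t$ to a time-changed Brownian motion, identify its quadratic variation as behaving like $(T-t)$ up to constants (again using Assumption \ref{ass: Xcrc_limit_behavior}), and then invoke the classical LIL to obtain the $\sqrt{\cdot \ln(1/\cdot)}$ modulus almost surely. The deterministic drift contributions, being $O(T-t)$ or smaller, vanish faster than the target rate and hence do not affect the $\limsup$. Combining the contraction estimate with the LIL bound on the fluctuations yields \eqref{eq: Xcrc_limit_behavior} with an explicit constant $M$ depending on $\underbar c, \bar c$ and the bound $C_F$.

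The main obstacle I anticipate is the infinite-dimensional bookkeeping needed to make the SDE for $Y_t$ rigorous: since $\Xcrc$ is only a \emph{mild} solution and need not be a strong solution (indeed the excerpt stresses that strong solutions generally fail for SPDEs), I cannot naively apply Itô's formula to $\Xcrc$ itself. The remedy is that $L$ is bounded and maps into the finite-dimensional $\R^k$, so the composition $t \mapsto L_{T-t}\Xcrc_t$ should be shown to be a genuine $\R^k$-valued semimartingale via a mild-to-weak reformulation — handling the $S_{T-t}$ factors through the semigroup's action on the stochastic convolution and verifying that the resulting finite-dimensional process has the claimed drift and diffusion characteristics. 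Establishing this reduction carefully, and confirming that the semigroup-induced terms in the drift of $Y_t$ remain subdominant relative to the $(T-t)^{-1}$ mean-reversion, is where the bulk of the technical effort will lie.
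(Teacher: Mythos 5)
Your reduction of the problem to the finite-dimensional process $Y_t = y - L_{T-t}\Xcrc_t$, justified through the mild formulation and the boundedness of $L$, is exactly the paper's starting point, and your identification of the three ingredients (the guiding drift, Assumption \ref{ass: Xcrc_limit_behavior}, boundedness of $F$) is correct. However, the route you take from there has a genuine gap. You propose to apply It\^o's formula to $|Y_t|^2$ and to rely on the mean-reverting drift $-L_{T-t}QL_{T-t}^*R_{T-t}^{-1}Y_t$ being ``genuinely strong enough to suppress $|Y_t|^2$ at the right rate''. That requires a \emph{lower} bound of order $(T-t)^{-1}|Y_t|^2$ on the quadratic form $\langle Y_t, L_{T-t}QL_{T-t}^*R_{T-t}^{-1}Y_t\rangle$, i.e.\ a pointwise-in-$t$ coercivity bound on $L_{T-t}QL_{T-t}^*$, which is the derivative of $R_t$ at $t=T-t$. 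Assumption \ref{ass: Xcrc_limit_behavior} only bounds $\|R_t^{-1}\|$; it controls $R_t$ itself but not its derivative pointwise, so no such coercivity follows from the stated hypotheses. Similarly, invoking the classical LIL for the time-changed martingale part does not by itself control $Y_t$, because the drift feeds $Y_t$ back into the equation; moreover the LIL produces a $\sqrt{(T-t)\ln\ln(1/(T-t))}$ modulus for the martingale alone, and transferring any such bound to the full drift-coupled process uniformly as $t\uparrow T$ is precisely the step you leave unexplained.

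The paper avoids both issues with a different Lyapunov function: $V(t,Y_t)=\frac{1}{2}\langle R_{T-t}^{-1}Y_t,Y_t\rangle$. The point is an exact cancellation: in $\df\bigl(R_{T-t}^{-1}Y_t\bigr)$ the term $\bigl(\frac{\df}{\df t}R_{T-t}^{-1}\bigr)Y_t = R_{T-t}^{-1}L_{T-t}QL_{T-t}^*R_{T-t}^{-1}Y_t$ cancels the mean-reversion contribution $-R_{T-t}^{-1}L_{T-t}QG(t,\Xcrc_t)$, so no lower bound on the reversion strength is ever needed. What survives in $\df V$ is the bounded-$F$ term, a trace term of order $(T-t)^{-1}$, and the combination $\df M_t - \frac{1}{2}\df[M]_t$, in which the negative quadratic-variation term is exactly the surviving piece of the guiding drift. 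This combination is controlled not by the LIL but by the exponential supermartingale inequality $\P\bigl(\sup_t (M_t-\frac{1}{2}[M]_t)>\gamma\bigr)\le e^{-\gamma}$ together with Borel--Cantelli along $t_k=T-1/k$ with $\gamma_k=\ln(k^{1+\varepsilon})$ --- this is where the $\ln(1/(T-t))$ (rather than $\ln\ln$) in the rate comes from. A Gronwall-type inequality for $\xi_t=(T-t)^{-1}|Y_t|^2$, using the lower bound in Assumption \ref{ass: Xcrc_limit_behavior} only to pass from $V$ back to $|Y_t|^2$, then yields \eqref{eq: Xcrc_limit_behavior}. To salvage your route you would have to either impose coercivity of $L_{T-t}QL_{T-t}^*$ as an additional hypothesis or switch to the weighted Lyapunov function, at which point you recover the paper's argument.
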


\subsection{Absolute continuity of \texorpdfstring{$\Pstr$}{P star} and \texorpdfstring{$\Pcrc$}{P circ} }
\label{subsec: 3_2_abs_cont}
In this section we establish sufficient conditions for the absolute continuity of $\Pstr$ with respect to $\Pcrc$ on $\calF_T$. 
We require the following assumption on the functions $h$ and $g$.

\begin{assumption}
\label{ass: absolute_continuity}
There exist continuous, positive functions $\lambda_1(t) \leq \lambda_2(t)$ satisfying $\lim_{t \to 0} \lambda_1(t) = \lim_{t \to 0} \lambda_2(t) = 1$ such that
\begin{align}
\label{eq: boundsghassumption}
\lambda_1(T-t) g(t,x) \leq h(t,x) \leq \lambda_2(T-t) g(t,x), \quad t < T, x \in H.
\end{align}
\end{assumption}

\begin{theorem}
\label{thm: absolute_continuity}
Under Assumptions \ref{ass: basic_assumptions_SPDE}, \ref{ass: Xcrc_limit_behavior} and \ref{ass: absolute_continuity}, it holds that $\Pstr$ and $\Pcrc$ are absolutely continuous on $\calF_T$ with
\begin{align}
\label{eq: Phi_T}
    \dfrac{\df \Pstr}{\df \Pcrc}(\Xcrc) = \Phi_T(\Xcrc)  \quad \Pcrc\text{-a.s.},
\end{align}
where  $\Phi_T(\Xcrc) = \frac{g(0,x_0)}{h(0,x_0)} \Psi_T(\Xcrc)$ with $\Psi_t(\Xcrc)$ defined as in Equation \eqref{eq: Psi(t)}.
\end{theorem}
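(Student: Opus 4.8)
The plan is to regard the densities on $\calF_t$ from \eqref{eq: Pstrt_Pcrc_t} as a $\Pcrc$-martingale in $t$ and to show that it closes at $t=T$ without loss of mass. Writing $c_0=g(0,x_0)/h(0,x_0)$, set
\begin{align*}
  D_t := \frac{\df\Pstr_t}{\df\Pcrc_t}(\Xcrc)=\frac{h(t,\Xcrc_t)}{g(t,\Xcrc_t)}\,c_0\,\Psi_t(\Xcrc),\qquad t<T.
\end{align*}
Since $\Pstr_t,\Pcrc_t$ are the restrictions to $\calF_t$ of the fixed measures $\Pstr,\Pcrc$ on $\calF_T$, consistency of nested Radon--Nikodym densities makes $(D_t)_{t<T}$ a nonnegative $\Pcrc$-martingale with $\E^{\circ}[D_t]=1$ and $D_0=1$; by the martingale convergence theorem $D_t\to D_{T^-}$ $\Pcrc$-a.s. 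The assertion \eqref{eq: Phi_T} is then equivalent to the two facts $D_{T^-}=\Phi_T(\Xcrc)$ and $(D_t)_{t<T}$ uniformly integrable: granting these, $D_t=\E^{\circ}[\Phi_T(\Xcrc)\mid\calF_t]$, so $\Pstr(\calA)=\int_\calA\Phi_T(\Xcrc)\,\df\Pcrc$ for all $\calA\in\calF_s$, $s<T$; as $\bigcup_{s<T}\calF_s$ is a $\pi$-system generating $\calF_T$ up to $\Pcrc$-null sets (by path-continuity $\Xcrc_T=\lim_{t\uparrow T}\Xcrc_t$ and left-continuity of the filtration), the two finite measures agree on $\calF_T$.

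First I would identify the almost sure limit. By Assumption \ref{ass: absolute_continuity}, $\lambda_1(T-t)\le h(t,\Xcrc_t)/g(t,\Xcrc_t)\le\lambda_2(T-t)$ with $\lambda_i(T-t)\to1$, hence this ratio tends to $1$. For the factor $\Psi_t$ it suffices that $\int_0^T|\langle F(s,\Xcrc_s),G(s,\Xcrc_s)\rangle|\,\df s<\infty$ $\Pcrc$-a.s. Bounding $|F|\le C_F$, using $G(s,x)=L^*_{T-s}R^{-1}_{T-s}(y-L_{T-s}x)$ together with $\|L_{T-s}\|=\|LS_{T-s}\|\lesssim1$ on $[0,T]$, Assumption \ref{ass: Xcrc_limit_behavior} ($\|R_{T-s}^{-1}\|\le\bar c\,(T-s)^{-1}$) and the rate of Theorem \ref{thm: Xcrc_limit_behavior} ($|y-L_{T-s}\Xcrc_s|\lesssim\sqrt{(T-s)\ln(1/(T-s))}$ eventually), the integrand is $\Pcrc$-a.s. eventually dominated by a multiple of $(T-s)^{-1/2}\sqrt{\ln(1/(T-s))}$, which is integrable near $T$. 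Therefore $\Psi_t\to\Psi_T<\infty$ and $D_{T^-}=c_0\Psi_T=\Phi_T(\Xcrc)$ $\Pcrc$-a.s.

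The main obstacle is the uniform integrability of $(D_t)$, equivalently the absence of loss of mass $\E^{\circ}[\Phi_T(\Xcrc)]=1$; this is where the precise rate is essential, because the steering drift $G$ is of order $(T-s)^{-1}$ near $T$. A cheap consequence of Assumption \ref{ass: absolute_continuity} is a squeeze: from $D_t/\lambda_2(T-t)\le c_0\Psi_t\le D_t/\lambda_1(T-t)$ and $\E^{\circ}[D_t]=1$ one gets $1/\lambda_2(T-t)\le\E^{\circ}[c_0\Psi_t]\le1/\lambda_1(T-t)$, so $\E^{\circ}[c_0\Psi_t]\to1$; with $c_0\Psi_t\to\Phi_T(\Xcrc)$ and Fatou this already gives $\E^{\circ}[\Phi_T(\Xcrc)]\le1$. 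The reverse inequality is the delicate point, which I would obtain by a reverse Fatou argument, i.e. by producing a $\Pcrc$-integrable majorant of $(c_0\Psi_t)_{t<T}$ (equivalently an $L^{1+\varepsilon}$ bound on $(D_t)$). I expect to build such a majorant from Theorem \ref{thm: Xcrc_limit_behavior} by a localisation exhausting $\Om$: on the event where $|y-L_{T-s}\Xcrc_s|$ respects the rate from a fixed time $a<T$ onward (whose $\Pcrc$-probability tends to $1$ as $a\uparrow T$) the tail $\int_a^T|\langle F,G\rangle|\,\df s$ is bounded by a deterministic integrable quantity, small for $a$ near $T$, while the bulk $\int_0^a|\langle F,G\rangle|\,\df s$ is controlled by the finite factor $(T-a)^{-1}$ and the moment bounds of the mild solution. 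Securing this exponential-type control of $\Psi_t$ uniformly in $t$ is the technical heart of the argument.

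Once $\E^{\circ}[\Phi_T(\Xcrc)]=1$ is established, Scheffé's lemma upgrades $c_0\Psi_t\to\Phi_T(\Xcrc)$ to $L^1(\Pcrc)$-convergence, and the sandwich $\Lambda_1 c_0\Psi_t\le D_t\le\Lambda_2 c_0\Psi_t$, with $\Lambda_1=\inf_s\lambda_1(s)>0$ and $\Lambda_2=\sup_s\lambda_2(s)<\infty$ finite because each $\lambda_i$ is continuous on $(0,T]$ with limit $1$ at $0$, transfers it to $L^1(\Pcrc)$-convergence and hence uniform integrability of $(D_t)$. This closes the martingale at $t=T$ with closing variable $\Phi_T(\Xcrc)$, and the $\pi$-system argument of the first paragraph yields \eqref{eq: Phi_T}.
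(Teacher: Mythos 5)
Your overall skeleton matches the paper's: identify the $\Pcrc$-a.s.\ limit of the density process $D_t$ as $\Phi_T(\Xcrc)$ (using Assumption \ref{ass: absolute_continuity} for the ratio $h/g$ and the rate from Theorem \ref{thm: Xcrc_limit_behavior} for the finiteness of $\Psi_T$), reduce everything to showing $\Ecrc[\Phi_T(\Xcrc)]=1$, and close with Scheff\'e's lemma. The Fatou half, $\Ecrc[\Phi_T(\Xcrc)]\le 1$, is fine. The gap is in the reverse inequality, which you correctly flag as the technical heart but then only sketch, and the sketched route does not close. A localisation on events $E_a$ where the path respects the rate of Theorem \ref{thm: Xcrc_limit_behavior} from time $a$ onward gives $\Pcrc(E_a)\to 1$ and an integrable bound for $\Psi_t$ \emph{on} $E_a$, but this yields at best $\Ecrc[\Phi_T]\ge 1-\limsup_t\Ecrc[\Phi_t\mathbbm{1}_{E_a^c}]$, and controlling $\Ecrc[\Phi_t\mathbbm{1}_{E_a^c}]$ uniformly in $t$ on a small-probability set is precisely the uniform integrability you are trying to prove --- the argument is circular. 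Nor is an $L^{1+\varepsilon}$ bound on $(D_t)$ plausible here: on the paper's stopping events $\{t\le\sigma_m\}$ the bound on $\Psi_t$ grows like $\exp(Km)$ (Lemma \ref{lem: likelihoodbound}), so there is no single integrable majorant in sight.

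The paper circumvents this by an ingredient absent from your proposal: the change-of-measure identity
\begin{align*}
\frac{g(0,x_0)}{h(0,x_0)}\,\Ecrc\bigl[\Psi_t(\Xcrc)\,f_t\bigr]=\Estr\Bigl[\tfrac{g(t,\Xstr_t)}{h(t,\Xstr_t)}\,f_t\Bigr]=\E\Bigl[\tfrac{g(t,X_t)}{h(0,x_0)}\,f_t\Bigr]
\end{align*}
for $\calF_t$-measurable $f_t$, applied with $f_t=\mathbbm{1}_{\{t\le\sigma_m\}}$ for the stopping times $\sigma_m$ of \eqref{eq: sigmam}. On the left, dominated convergence (via the $\exp(Km)$ bound) followed by monotone convergence in $m$ produces $\Ecrc[\Psi_T(\Xcrc)]$; on the right, one only needs $\E[g(t,X_t)]/h(0,x_0)\to 1$ (a squeeze via Assumption \ref{ass: absolute_continuity} and the martingale property of $h(t,X_t)$) together with $\E[g(t,X_t)\mathbbm{1}_{\{t>\sigma_m\}}]\to 0$ (Lemma \ref{lem: gsigmam_vanishes}, which uses optional stopping for $h(t,X_t)$ and the explicit Gaussian decay of $g$ at distance $m\,r(\sigma_m)$ from $y$). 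This transfers the mass computation to the unconditioned process, where no integrable majorant of $\Psi_t$ is ever needed. Without this (or an equivalent device) your proof of $\Ecrc[\Phi_T(\Xcrc)]\ge 1$ is incomplete.
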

We give the proof of Theorem \ref{thm: absolute_continuity} in Section \ref{sec: 7_proof_abs_cont}.

\begin{remark}
    As an immediate consequence of Theorem \ref{thm: absolute_continuity} and a change of variables, it follows that the laws $\calLstr$ and $\calLcrc$ are absolutely continuous on the path space $C([0,T];H)$.
    This implies that we can obtain weighted samples of the intractable diffusion bridge $\Xstr$ by sampling from the tractable guided process $\Xcrc$ and evaluating the weights given by the Radon–Nikodym derivative \eqref{eq: Phi_T}.
    The weights contain an intractable multiplicative term $h(0,x_0)$. This term acts as a proportionality constant that cancels out in a renormalisation step of the weights. 
\end{remark}

\subsection{Remarks on assumptions}
\label{subsec: 3_3_remarks_assumptions}

Let us briefly comment on the assumptions underlying our main results.
The assumptions in \ref{ass: basic_assumptions_SPDE} are standard, with the exception of the boundedness of $F$ in Assumption \ref{ass: basic_assumptions_SPDE}(iv). It is needed for our proof of Theorem \ref{thm: absolute_continuity}. However, our numerical illustrations in Example \ref{ex: AllenCahn} indicate that this assumption is stronger than required for the absolute continuity to hold.

Assumption \ref{ass: Xcrc_limit_behavior} essentially states that the covariance matrix $R^{-1}_{T-t}$ present in the guiding term $G(t,x)$ explodes as the guided process approaches the conditioning time $t \uparrow T$. Moreover, it is crucial that this divergence occurs at a linear rate. This aligns with the intuition gained from the simplest conditioned diffusion, the one-dimensional Brownian bridge, where the equivalent term in the Brownian bridge equation is given by $(T-t)^{-1}$.

In this infinite-dimensional setting, Assumption \ref{ass: Xcrc_limit_behavior} depends through the operators $Q_t$ on the interplay between the drift $A$ and covariance operator $Q$ of Equation \eqref{eq: semilin_spde}, as well as on the observation operator $L$. 
As illustrated in Section \ref{sec: 5_examples}, this assumption can typically be validated directly in applications of interest. It may fail though for hypo-elliptic diffusions on $\R^n$, such as an integrated diffusion. In that case, the temporal regularity of the sample paths is not the same for all components of the diffusion and therefore the conclusion of Theorem \ref{thm: Xcrc_limit_behavior} won't hold. We refer to \cite{vdMeulenBierkensSchauer2020Simulation} for a more detailed discussion.

Assumption \ref{ass: absolute_continuity} dictates the functions $h$ and $g$ to be asymptotically equivalent in the limit $t \uparrow T$. For finite dimensional state spaces, this assumption can be traced back to bounds on the transition densities of $X$ and $Z$, using, for example, Aronson's inequality (cf. \cite{Aronson1967Bounds}). In our setting, such transition densities are not guaranteed to exist, making it necessary to work with $h$ and $g$ directly instead. 

The following lemma gives a sufficient condition for Assumption \ref{ass: absolute_continuity} to be satisfied. It is shown in Section \ref{subsec: boundsghproof}.
\begin{lemma} 
\label{lem: bounds_gh}
Let the non-linearity $F$ and covariance operator $Q$ be such that $\tilde{F}(t,x) = Q^{-\fsqrt}F(t,x)$ is well defined with $|\tilde{F}(t,x)| \leq C_{\tilde{F}}$ for all $t \in [0,T], x \in H$.
Then there exists a continuous function $\lambda(t) \geq 1$ satisfying $\lim_{t \to 0} \lambda(t) = 1$ such that
\begin{align}
\label{eq: bounds_ghlemma}
    \frac{g(t,x)}{\lambda(T-t)} \leq h(t,x) \leq \lambda(T-t) g(t,x), \quad t < T,\: x \in H.
\end{align}
\end{lemma}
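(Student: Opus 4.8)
The plan is to write the ratio $h/g$ as the conditional expectation of a Girsanov density and then to bound this expectation from both sides by quantities that converge to $1$ as $t\uparrow T$.

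First I would invoke Girsanov's theorem. Since $\tilde{F}(s,x)=Q^{-\fsqrt}F(s,x)$ is well defined and bounded by $C_{\tilde{F}}$, Novikov's condition holds and the law on $C([t,T];H)$ of $X$ started from $x$ at time $t$ is equivalent to that of the Ornstein–Uhlenbeck process $Z$ started from $x$, with density
\[
D=\exp\Bigl(\int_t^T \langle \tilde{F}(s,Z_s),\df W_s\rangle-\tfrac12\int_t^T|\tilde{F}(s,Z_s)|^2\,\df s\Bigr).
\]
Disintegrating over the endpoint functional $LZ_T$ and matching the resulting densities on $\R^k$ (using that $\calL(LX_T)$ is absolutely continuous with respect to $\calL(LZ_T)$) gives the identity $h(t,x)/g(t,x)=\E[\,D\mid Z_t=x,\ LZ_T=y\,]$, so the whole lemma reduces to two-sided control of this conditional expectation, which I denote $\widetilde{\E}[D]$, the expectation under the Gaussian $Z$-bridge.

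Second, I would exploit that under the $Z$-bridge the noise $W$ acquires the drift $Q^{\fsqrt}G(s,Z_s)$, with $G$ the guiding field from \eqref{eq: G}, so that $\df W_s=Q^{\fsqrt}G(s,Z_s)\,\df s+\df\widetilde{W}_s$ for a bridge Wiener process $\widetilde{W}$. Using $\langle\tilde{F},Q^{\fsqrt}G\rangle=\langle F,G\rangle$, the density factorizes as
\[
D=\exp\Bigl(\int_t^T\langle F(s,Z_s),G(s,Z_s)\rangle\,\df s\Bigr)\,\mathcal{E}_{t,T},\qquad \mathcal{E}_{t,T}=\exp\Bigl(\widetilde{M}-\tfrac12\langle\widetilde{M}\rangle\Bigr),
\]
where $\widetilde{M}=\int_t^T\langle\tilde{F}(s,Z_s),\df\widetilde{W}_s\rangle$ is a bridge martingale with $\langle\widetilde{M}\rangle\le C_{\tilde{F}}^2(T-t)$ and $\mathcal{E}_{t,T}$ is a true martingale of unit mean. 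For the lower bound I would apply Jensen's inequality to $\log\widetilde{\E}[D]$, which yields $\widetilde{\E}[\int_t^T\langle F,G\rangle\,\df s]-\tfrac12\widetilde{\E}[\langle\widetilde{M}\rangle]\ge -C_F\,\widetilde{\E}[\int_t^T|G(s,Z_s)|\,\df s]-\tfrac12 C_{\tilde{F}}^2(T-t)$. For the upper bound I would peel off $\mathcal{E}_{t,T}$ by Cauchy–Schwarz, using $\widetilde{\E}[\mathcal{E}_{t,T}^2]\le e^{C_{\tilde{F}}^2(T-t)}$, and then estimate the exponential moment $\widetilde{\E}[\exp(2\int_t^T\langle F,G\rangle\,\df s)]$.

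The crux in both directions is controlling the cross term $\int_t^T\langle F(s,Z_s),G(s,Z_s)\rangle\,\df s$: the guiding field $G(s,z)=L^*_{T-s}R^{-1}_{T-s}(y-L_{T-s}z)$ blows up as $s\uparrow T$ at the rate $\|R^{-1}_{T-s}\|\sim(T-s)^{-1}$ forced by Assumption \ref{ass: Xcrc_limit_behavior}. The saving feature is that under the bridge the residual $y-L_{T-s}Z_s$ is a centered Gaussian whose covariance is of order $R_{T-s}\sim(T-s)$ (again by Assumption \ref{ass: Xcrc_limit_behavior}), so that $|\langle F,G\rangle|\lesssim C_F|G(s,Z_s)|$ is of size $(T-s)^{-1/2}$ in $L^2$ and its time integral is $O(\sqrt{T-t})$. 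Making this rigorous — computing the conditional mean and covariance of $L_{T-s}Z_s$ under the Gaussian bridge, establishing the uniform exponential integrability needed for Cauchy–Schwarz (available by Fernique-type concentration, since $\int_t^T|G|\,\df s$ is a seminorm of a Gaussian path), and verifying that the resulting estimates are continuous in $T-t$ and tend to $1$ — is where the real work lies. Taking $\lambda(T-t)$ to be the larger of the two exponential bounds then produces a continuous function with $\lambda(u)\to1$ as $u\to0$, which is exactly \eqref{eq: bounds_ghlemma}. As an alternative route to the upper bound, one may reweight the bridge by $\mathcal{E}_{t,T}$ to obtain the guided dynamics and instead invoke the rate in Theorem \ref{thm: Xcrc_limit_behavior} to bound the cross term pathwise.
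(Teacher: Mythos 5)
Your opening move is the same as the paper's: Girsanov plus the abstract Bayes formula give $h(t,x)/g(t,x)=\widetilde{\E}[D]$, the expectation of the Girsanov density under the Gaussian bridge. After that you diverge, and the route you choose has a genuine gap. The bound \eqref{eq: bounds_ghlemma} must be \emph{uniform in $x\in H$}, and your decomposition cannot deliver that. Under the bridge from $(t,x)$ to $\{LZ_T=y\}$ the residual $y-L_{T-s}Z_s$ is \emph{not} centered: a direct Gaussian computation gives conditional mean $R_{T-s}R_{T-t}^{-1}(y-L_{T-t}x)$, so $\widetilde{\E}[G(s,Z_s)]=L_{T-s}^{*}R_{T-t}^{-1}(y-L_{T-t}x)$ and hence $\widetilde{\E}\bigl[\int_t^T|G(s,Z_s)|\,\df s\bigr]$ is of order $|y-L_{T-t}x|$, which is unbounded over $x\in H$ and does not vanish as $t\uparrow T$. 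Consequently your Jensen lower bound degrades like $\exp(-C|y-L_{T-t}x|)$ and your Cauchy--Schwarz upper bound requires an exponential moment of $2\int_t^T\langle F,G\rangle\,\df s$ that is likewise not uniformly controlled; the true ratio $\widetilde{\E}[D]$ stays bounded only because of cancellation between the cross term and the bridge martingale $\mathcal{E}_{t,T}$, and splitting them by Cauchy--Schwarz destroys exactly that cancellation. Only the centered (fluctuation) part of $G$ obeys the $O(\sqrt{T-t})$ estimate you describe.

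The paper's proof avoids this entirely and is much shorter: it keeps the Girsanov exponent intact, discards the nonpositive term $-\tfrac12[M]_T$ to get $\widetilde{\E}[D]\le\widetilde{\E}[\exp(M(T))]$, and then uses Dambis--Dubins--Schwarz together with $[M]_T\le C_{\tilde F}^2(T-t)$ to dominate $M(T)$ pathwise by $\sup_{0\le s\le C_{\tilde F}^2(T-t)}\bar W_s$, whose exponential moment is the explicit function $\exp(\xi_t/2)\bigl[1+\operatorname{erf}(\sqrt{\xi_t/2})\bigr]$ with $\xi_t=C_{\tilde F}^2(T-t)$ --- an expression depending on $T-t$ alone, with no reference to $x$ or to the guiding field $G$. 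The lower bound follows by symmetry, interchanging $\rho_X$ and $\rho_Z$. If you want to salvage your route you would have to exploit the correlation between the cross term and $\mathcal{E}_{t,T}$ rather than separating them, at which point you are essentially forced back to treating the exponent $M(T)-\tfrac12[M]_T$ as a whole, which is the paper's argument.
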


\section{A Metropolis-Hastings algorithm for sampling infinite-dimensional diffusion bridges}
\label{sec: 4_MH_algorithm}

We present a short description on how the theoretical result of Theorem \ref{thm: absolute_continuity} can be used to simulate the infinite-dimensional diffusion bridge $\Xstr$.
This can, for example, be achieved through a simple Metropolis-Hastings sampler (MH sampler), using the law $\calLcrc$ of the guided process as a proposal distribution to draw samples $\Xstr$ of $\calLstr$ by iterating between the following steps:
\begin{itemize}
    \item[1.] Draw a sample from the proposal distribution $\Xcrc \sim \calLcrc$.
    \item[2.] Accept/reject $\Xcrc$ with acceptance probability based on the Radon–Nikodym derivative $\Phi_T(\Xcrc)$.
\end{itemize}

In practical applications, one has to numerically approximate $X$ and $\Xstr$ on some gridded domain $D \times [0,T], D \subset \R^d$.
For such computational implementations, we write $\solve(A,F,Q,W)$ for any \textit{numerical SPDE solver} that approximates the mild solution $X$ of an SPDE of the form \eqref{eq: semilin_spde} on the given grid.
We refer to \cite{Lord2014Introduction}, Chapter 10, for an introduction to the solvers used in the field. Since we are dealing with diagonalisable operators $A$ and $Q$, a natural choice for the SPDE solver is the \textit{spectral Galerkin method}.

The proposals of $\Xcrc$ can be constructed to depend on the current value of the chain based on the \textit{preconditioned Crank-Nicolson (pCN)} scheme (see \cite{Neal1999Regression}, \cite{Beskos08MCMC}, \cite{Cotter2013MCMC}) as follows.  
In step $i$, let $Z$ denote the process such that for the current value of the chain $\Xstr_i = \solve(A, F+Q \Gcrc, Q,Z)$.
The proposal in the $i$-th sampling step then takes on the form:
\begin{itemize}
    \item[(i)] Draw a Wiener process $W$, independent of $Z$;
    \item[(ii)] Set $Z^{\circ}= \sqrt{1 - \beta^2} Z + \beta W$;
    \item[(iii)] Compute $\Xcrc = \solve(A,F+Q \Gcrc,Q,Z^{\circ})$.
\end{itemize}
Here, $\beta \in (0,1]$ denotes a hyperparameter that determines the size of the pCN step. For $\beta = 1$, independent proposals of $\Xcrc$ are drawn.

We summarize the complete algorithm in the following.

\begin{algorithm}[H]
\label{alg: MH_sampler1}
\caption{MH Sampler of $\Xstr$}
\LinesNotNumbered  
    \KwIn{SPDE Parameters $A,~F, ~Q$ and $x_0$, conditioning point $y \in H$, gridded domain $D \times [0,T]$, iterations $N$, step size $\beta$}
	\KwOut{Samples $(\Xstr_i)_{i=0}^N$ of $\Xstr$ on $D \times [0,T]$}
	\textbf{Initialize:} Draw a Wiener process $Z$ and set $\Xstr_0 = \solve(A,F+Q \Gcrc,Q,Z)$\;
	\For{$i = 0... ~N-1$}{
 	\textbf{Proposal} \\
                \quad(i) ~Draw a Wiener process $W$\; 
  			\quad(ii) Set $Z^{\circ}= \sqrt{1 - \beta^2} Z + \beta W$\;
            \quad(iii) Compute $\Xcrc = \solve(A,F+Q \Gcrc,Q,Z^{\circ})$\;
        \textbf{Update}  \\
  		\quad(i) ~Compute $M = \min\left(1, \dfrac{\Phi_T(\Xcrc)}{\Phi_T(\Xstr_i)} \right)$\;
            \quad(ii) Draw $U \sim \text{Unif}(0,1)$\; 
        \quad \quad \If{$U < M $}{
      \quad \quad Set $\Xstr_{i+1} = \Xcrc$ and $Z = Z^{\circ}$\;
    }
    \quad \quad \Else{
       \quad \quad Set $\Xstr_{i+1} = \Xstr_i$.
    }
  	}
\end{algorithm}

\begin{remark}
Since the results in Theorem \ref{thm: absolute_continuity} are infinite-dimensional, the validity of Algorithm \ref{alg: MH_sampler1} depends neither on the choice of the numerical solver nor on the discretisation of the domain $D \times [0,T]$.
In particular, for a practical implementation, both the solver and the mesh size can be freely chosen and the implementation will remain valid if the mesh size tends to zero. 
With a trade-off of greater computational costs, a finer mesh size will not only improve the approximation quality of the numerical SPDE solver, but also the approximation of the likelihood ratios $\Phi_T(\Xcrc)$. 
\end{remark}

\section{Examples}
\label{sec: 5_examples}

\subsection{Diagonalisable equations}
Consider the case that the linear operators $A$ and $Q$ share an eigenbasis of $H$. Specifically, assume the following. 
\begin{assumption}
\label{ass: AQdiagonal}
    The operators $A$ and $Q$ are diagonalisable, i.e. there exists an orthonormal basis $(e_j)_{j \in \N}$ of $H$ and positive sequences $(a_j)_{j \in \N}$ and $(q_j)_{j \in \N}$ such that $a_j \to \infty$, $\sup q_j < \infty$ and
    \begin{align}
    \begin{split}
        A e_j &= -a_j e_j  \\
        Q e_j &= q_j e_j.
    \end{split}
    \end{align}
\end{assumption}
Given Assumption \ref{ass: AQdiagonal}, Equation \eqref{eq: semilin_spde} is commonly referred to as a \textit{diagonalisable SPDE}.
If $F$ is a linear operator that commutes with $A$ and $Q$, such an equation is referred to as \textit{fully diagonalisable}. In that case, the mild solution $X$ is an Ornstein-Uhlenbeck process that can be represented as an infinite series of decoupled SODEs in its eigenmodes. Note, however, that such a decomposition is no longer valid when $F$ is a non-linear operator. 

In the diagonalisable setting, $A$ generates a strongly continuous semigroup $(S_t)_{t \geq 0}$ with $S_t e_j = \exp(-a_j t) e_j$ and, in particular, satisfies Assumption \ref{ass: basic_assumptions_SPDE}(i). Moreover, Assumption \ref{ass: basic_assumptions_SPDE}(iii) is equivalent to 
\begin{align}
\label{eq: q_ja_jbound}
    \sum_{j=1}^{\infty} \dfrac{q_j}{a_j} < \infty,
\end{align}
under which the operators $(Q_t)_{t \geq 0}$ are diagonalisable with eigenvalues
\begin{align}
\begin{split}
\label{eq: eigenvalues_Qt}
q_j(t) &= \frac{q_j}{2 a_j} \left[ 1 - \exp(-2 a_j t) \right].
\end{split}
\end{align}

In the following proposition, we show that, given a diagonalisable SPDE, Assumption \ref{ass: Xcrc_limit_behavior} is satisfied for two common choices of the observation operator $L$.

\begin{proposition}
\label{prop: LPkPw}
Let Assumption \ref{ass: AQdiagonal} and Equation \eqref{eq: q_ja_jbound} be satisfied.
Then, Assumption \ref{ass: Xcrc_limit_behavior} holds in either of the following cases:
\begin{itemize}
    \item $L = P_k$ is the projection onto the first $k$ coordinates with respect to the eigenbasis $(e_j)_{j \in \N}$, i.e. \begin{align}
    \label{eq: Pk}
        P_k x = \left( \langle x, e_j \rangle \right)_{j=1}^k, \quad x \in H.
    \end{align}
    \item $L = P_w$ is the projection onto the coordinate of the subspace $\text{span}\{w\}$ for some fixed $w \in H$, i.e.
    \begin{align}
    \label{eq: L_w}
        P_w x = \langle x, w \rangle, \quad x \in H.
    \end{align}
\end{itemize}
\end{proposition}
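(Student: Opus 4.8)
The plan is to exploit the diagonal structure throughout. Under Assumption \ref{ass: AQdiagonal} the covariance operators $Q_t$ are diagonalised in the basis $(e_j)$ with the eigenvalues $q_j(t)$ of \eqref{eq: eigenvalues_Qt}, so the matrices $R_t = L Q_t L^*$ can be written down explicitly in the two cases, and the whole statement reduces to a two-sided estimate on the $q_j(t)$. First I would compute $R_t$. For $L = P_k$ in \eqref{eq: Pk}, the adjoint $P_k^* : \R^k \to H$ is the canonical inclusion $c \mapsto \sum_{j=1}^k c_j e_j$, so $Q_t e_j = q_j(t) e_j$ gives $R_t = \diag(q_1(t), \dots, q_k(t))$, a positive definite diagonal matrix, whence $\|R_t^{-1}\| = 1 / \min_{1 \le j \le k} q_j(t)$. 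For $L = P_w$ in \eqref{eq: L_w}, the adjoint is $c \mapsto c\,w$, so $R_t$ is the positive scalar $R_t = \langle Q_t w, w \rangle = \sum_{j \ge 1} q_j(t)\, w_j^2$ with $w_j = \langle w, e_j \rangle$, and $\|R_t^{-1}\| = 1/R_t$.

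The heart of the argument is the elementary two-sided bound
\[
  c_j\, t \;\le\; q_j(t) \;\le\; q_j\, t, \qquad t \in (0,T], \quad c_j := q_j\,\frac{1 - e^{-2 a_j T}}{2 a_j T} > 0 .
\]
Both inequalities follow by writing $q_j(t) = q_j\, t \cdot \frac{1 - e^{-2 a_j t}}{2 a_j t}$ and using that $x \mapsto (1 - e^{-x})/x$ is nonincreasing on $(0,\infty)$ with value at most $1$: the upper bound uses $(1-e^{-x})/x \le 1$, while for the lower bound the map $t \mapsto (1 - e^{-2 a_j t})/(2 a_j t)$ is nonincreasing, so on $(0,T]$ it is bounded below by its value at $t = T$. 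This localises the only nontrivial content to a monotonicity fact about a single scalar function.

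Feeding this estimate into the two formulas for $R_t$ then finishes both cases. For $P_k$ only finitely many indices occur, so $\min_{1 \le j \le k} q_j(t)$ is squeezed between $(\min_j c_j)\, t$ and $(\min_j q_j)\, t$, giving \eqref{eq: ass_Xcrc_limit_behavior} with $\underline{c} = 1/\min_j q_j$ and $\bar{c} = 1/\min_j c_j$. For $P_w$ I would sum termwise: the upper bound $R_t \le t \sum_j q_j w_j^2$ requires the series to converge, which it does since $\sup_j q_j < \infty$ and $w \in H$ force $\sum_j q_j w_j^2 \le (\sup_j q_j)\,|w|^2 < \infty$; the lower bound $R_t \ge t \sum_j c_j w_j^2$ requires $\sum_j c_j w_j^2 > 0$, which holds because $w \ne 0$ and every $c_j > 0$. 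This yields \eqref{eq: ass_Xcrc_limit_behavior} with $\bar{c} = (\sum_j c_j w_j^2)^{-1}$ and $\underline{c} = (\sum_j q_j w_j^2)^{-1}$.

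The only genuine obstacle, and the reason the two cases differ slightly, lies in the passage to the infinite series for $P_w$: one must control a sum of eigenvalues uniformly in $t$ rather than a finite minimum. The uniform upper bound on $R_t$ is the delicate direction, but it is recovered from $q_j(t) \le q_j\, t \le (\sup_j q_j)\, t$ together with $w \in H$; the summability condition \eqref{eq: q_ja_jbound} is what guarantees that $Q_t$, and hence $R_t$, is well defined to begin with. Everything else is routine.
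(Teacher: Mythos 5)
Your proof is correct and follows essentially the same route as the paper: compute $R_t$ explicitly in the eigenbasis (diagonal with entries $q_j(t)$ for $P_k$, the scalar $\langle Q_t w, w\rangle$ for $P_w$) and derive the two-sided linear bound from the monotonicity of $x \mapsto (1-e^{-x})/x$, which is exactly the content of the paper's display \eqref{eq: ao123798721o3}. The only cosmetic differences are that you bound $\lambda_{\min}(R_t)$ from above where the paper bounds $\lambda_{\max}(R_t)$ (yielding a marginally sharper $\underline{c}$ in the $P_k$ case), and that you justify convergence of the lower-bound series for $P_w$ via $c_j \le q_j$ rather than via \eqref{eq: q_ja_jbound}; both are equally valid.
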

\begin{proof}
Since $R_t = L Q_t L^*$ is symmetric positive definite, to show \eqref{eq: ass_Xcrc_limit_behavior} it suffices to show that
\begin{align}
\label{eq: ap192873132}
\begin{split}
    \lambda_{\max}(R_t) &\leq t ~ \underbar{c}^{-1}, \\
    \lambda_{\min}(R_t) &\geq t ~ \bar{c}^{-1},  \quad t \in (0,T],
\end{split} 
\end{align}
for some constants $\bar{c} > 0$ and $\underbar{c} > 0$, with $\lambda_{\min}(R_t)$ and $\lambda_{\max}(R_t)$ denoting the smallest and largest eigenvalue of $R_t$.
We will use that, for any $j \geq 1$, 
\begin{align}
\begin{split}
\label{eq: ao123798721o3}
\sup_{t \in (0,T]} t^{-1} (1 - \exp(-2 a_j t)) &= 2 a_j \\
\inf_{t \in (0,T]} t^{-1} (1 - \exp(-2 a_j t)) &= T^{-1} (1 - \exp(-2 a_j T)),
\end{split}
\end{align} 
which follows from a Taylor expansion at $t = 0$. 

Consider the first case of $L = P_k$. A simple computation shows that $R_t$ is given by the diagonal $k \times k$-matrix with entries $q_j(t), j = 1,...,k$. Hence, plugging in $q_j(t)$ as in \eqref{eq: eigenvalues_Qt} and \eqref{eq: ao123798721o3} we have
\begin{align*}
\begin{split}
\max_{1 \leq j \leq k} \sup_{t \in (0,T]} t^{-1} q_j(t) &= \max_{1 \leq j \leq k} \sup_{t \in (0,T]}  \frac{q_j}{2 a_j} t^{-1} (1 - \exp(-2 a_j t)) \\
&= \max_{1 \leq j \leq k} q_j =: \underbar{c}^{-1}
\end{split}
\end{align*}
as well as
\begin{align*}
\begin{split}
\min_{1 \leq j \leq k} \inf_{t \in (0,T]} t^{-1} q_j(t) &= \min_{1 \leq j \leq k} \inf_{t \in (0,T]}  \frac{q_j}{2 a_j} t^{-1} (1 - \exp(-2 a_j t)) \\
&= \min_{1 \leq j \leq k}  \frac{q_j}{2 a_j} T^{-1} (1 - \exp(-2 a_j T)) =: \bar{c}^{-1}.
\end{split}
\end{align*}
This shows \eqref{eq: ap192873132} in the first case. For the second case, note that $P_w^*$ is the mapping $y \mapsto y w \in H$ for any $y \in \R$. From this, it follows that 
\begin{align*}
    R_t y = (P_w Q_t P_w^*)y = (L Q_t) (yw) = \langle Q_t w, w \rangle y
\end{align*}
and hence $R_t = \langle Q_t w, w \rangle$. Plugging in $q_j(t)$ as given in \eqref{eq: eigenvalues_Qt}, it holds
\begin{align*}
    \sup_{t \in (0,T]} t^{-1} R_t &= \sup_{t \in (0,T]} t^{-1} \langle Q_t w, w \rangle \\
    &\leq \sum_{j=1}^{\infty} \frac{q_j}{2 a_j} \left( \sup_{t \in (0,T]} t^{-1} \left( 1 - \exp(-2 a_j t) \right) \right) |\langle w, e_j \rangle|^2 \\
    &= \sum_{j=1}^{\infty} q_j |\langle w, e_j \rangle|^2 = \langle Q^{\fsqrt} w, Q^{\fsqrt} w \rangle =: \underbar{c}^{-1}.
\end{align*}
Here, we used \eqref{eq: ao123798721o3} in the third step and the fact that $Q$ is positive in the last step. 
This shows the first inequality in \eqref{eq: ap192873132}. The second inequality follows from the fact that 
\begin{align*}
    \inf_{t \in (0,T]} t^{-1} R_t &\geq \sum_{j=1}^{\infty} \frac{q_j}{2 a_j} \left( \inf_{t \in (0,T]} t^{-1} \left( 1 - \exp(-2 a_j t) \right) \right) |\langle w, e_j \rangle|^2 \\
    &=  \sum_{j=1}^{\infty} \frac{q_j}{2 a_j} T^{-1} \left( 1 - \exp(-2 a_j T) \right) |\langle w, e_j \rangle|^2 =: \bar{c}^{-1},
\end{align*}
where we again used \eqref{eq: ao123798721o3} in the second step and \eqref{eq: q_ja_jbound} in the last step to justify the convergence of the sum. This shows \eqref{eq: ap192873132} and thus finishes the proof.

\end{proof}

\begin{remark}
Consider the special case that $H = L^2(D)$ for some bounded domain $D \subset \R^d$. 
In that case, the observation operator
\begin{align*}
    P_w x = \langle x, w \rangle = \int_D x(\xi) w(\xi) \, \df \xi
\end{align*} 
of Proposition \ref{prop: LPkPw} can be understood as observing a weighted average $P_w X_T$ of $X_T$. Depending on the choice of $w \in L^2(D)$, this can, for example, correspond to observing a global or local average of $X_T$.
Moreover, it is clear that the second case in Proposition \ref{prop: LPkPw} extends to the case that one observes a set of weighted averages $\int_D x(\xi) w_j(\xi) \, \df \xi$ for functions $w_1,...,w_k \in L^2(D)$. 
\end{remark}

\begin{example}[Stochastic reaction-diffusion equations]
\label{ex: reaction_diffusion_equations}
Consider a stochastic reaction-diffusion equation, formally written as 
\begin{align}
\begin{split}
\label{eq: stoch_reaction_diffusion}
\partial_t X(t,\xi) &=  \partial^2_{\xi} X(t,\xi) + f(X(t,\xi)) + \dot{w}(t,\xi), \quad t,\xi \in [0,T] \times [0, \pi], 
\end{split}
\end{align}
with initial condition $X(0,\xi) = x_0(\xi)$ and homogeneous Dirichlet boundary conditions $X(t,0) = X(t,\pi) = 0$ for all $t, \xi \in [0,T] \times [0,\pi]$.
Here, $f$ is a continuous, scalar-valued function of at most linear growth and $\dot{w}$ is a Gaussian noise term that is white in time and possibly spatially dependent. 

It is well known that \eqref{eq: stoch_reaction_diffusion} can be stated as a semilinear SPDE of the form \eqref{eq: semilin_spde}, where $A$ denotes the Dirichlet Laplace operator $A = \partial^2_{\xi}$ on $H = L^2([0,\pi])$ with domain $D(A) = H^2([0,\pi]) \cap H^1_0([0,\pi])$. Here, $H^2([0,\pi])$ is the Sobolev space $W^{2,2}([0,\pi])$ and $H^1_0$ denotes the closure of the smooth and compactly supported test functions in $W^{1,2}([0,\pi])$.
Furthermore, $F$ is the nonlinearity given by the Nemytskii operator $F(X_t)(\xi) = f(X(t,\xi))$ generated by $f$ and $Q$ is a symmetric, positive definite operator on $H$. 

The Dirichlet Laplace operator admits an eigenbasis $e_j(\xi) = \sin( j \xi)$ with eigenvalues $- a_j = - j^2, j \geq 1$ on $H$.
Hence, Assumption \ref{ass: AQdiagonal} and Equation \eqref{eq: q_ja_jbound} are satisfied for any operator $Q$ on $H$ that is diagonalisable with respect to $(e_j)_j$ with eigenvalues $q_j \sim j^{-r}$ for some $r \geq 0.$
\end{example}

\subsection{Stochastic Amari equation}
Assume the stochastic Amari equation, formally given by
\begin{align}
\begin{split}
\label{eq: stoch_amari_equation}
\partial_t X(t,\xi) &=  - X(t,\xi) + \int_{[0,\pi]} f(\xi,\xi') s(X(t,\xi')-\theta) \, \df \xi' + \dot{w}(t,\xi), \quad t,\xi \in [0,T] \times [0, \pi],
\end{split}
\end{align}
with initial condition $X(0, \xi) = x_0(\xi)$.
Here, $X(t,\xi)$ models the activity of a neural field at time $t$ and location $\xi$. The mapping $f$ represents a spatial connectivity function on $[0,\pi]$, whereas $s$ is an activation function with threshold $\theta > 0$. 
The term $-X(t,\xi)$ is a linear damping term that models relaxation of neural activity.

For more details on the Amari equation and neural field models we refer to the recent survey in \cite{Cook2022Neural} and to \cite{Faugeras2015Stochasticneuralfield} for a mathematical view on the field. 

The Amari equation can be formulated as a semilinear SPDE of the form \eqref{eq: semilin_spde} on $H = L^2([0,\pi])$ with $A X = - X$ and $F(X)(\xi) = \int_{[0,\pi]} f(\xi,\xi') s(X(\xi')-\theta) \df \xi'$. Typically, $f$ is chosen to be continuous and $s$ to be Lipschitz continuous, in which case $F$ is itself a bounded and Lipschitz continuous mapping on $H$. 
Since the semigroup generated by $A$ is no longer smoothing, one requires the noise operator $Q$ to be trace-class. 
In that case, Assumption \ref{ass: basic_assumptions_SPDE} is met. Moreover, it follows just as in the proof of Proposition \ref{prop: LPkPw}, that Assumption \ref{ass: Xcrc_limit_behavior} is satisfied for the observation operators $L = P_k$ and $L = P_w$.

We apply Lemma \ref{lem: bounds_gh} to show that Assumption \ref{ass: absolute_continuity} is satisfied for common choices of the spatial connectivity function $f$. 
One such typical choice is given by a difference of Gaussian kernels 
\begin{align*}
    f(\xi,\xi') &= A_1 \exp \left(- \dfrac{|\xi - \xi'|^2}{2 \sigma_1^2} \right) - A_2 \exp \left(- \dfrac{|\xi - \xi'|^2}{2 \sigma_2^2} \right)
\end{align*}
for some positive parameters $A_1, A_2, \sigma_1, \sigma_2$.
Consider the associated Hilbert-Schmidt integral operator $T_f$ defined by 
\begin{align*}
    (T_f u)(\xi) =  \int_{[0,\pi]} f(\xi,\xi') u(\xi') \df \xi', \quad u \in H.
\end{align*}
By Mercer's theorem, it follows that $T_f$ admits an eigenbasis $(e_j)_{j \geq 1}$ of $H$ and it is a well-known result that the eigenvalues $\lambda_j$ of $T_f$ are of exponential decay $\lambda_j \sim \exp(-j^2)$.
Hence, representing the non-linearity $F$ as $F(X) = T_f s_{\theta}(X)$, where we write with slight abuse of notation $s_{\theta}$ for the Nemytskii operator $s_{\theta}(X)(\xi) = s(X(\xi)-\theta)$, it follows that for any trace-class operator $Q$ with eigenvalues $q_j \sim j^{-r}, r > 1$, we have that $Q^{-1/2} F = Q^{-1/2} T_f s_{\theta}$ is well-defined and bounded.

\begin{figure}[ht]
    \centering
    \subfigure{
        \includegraphics[width=0.47\linewidth]{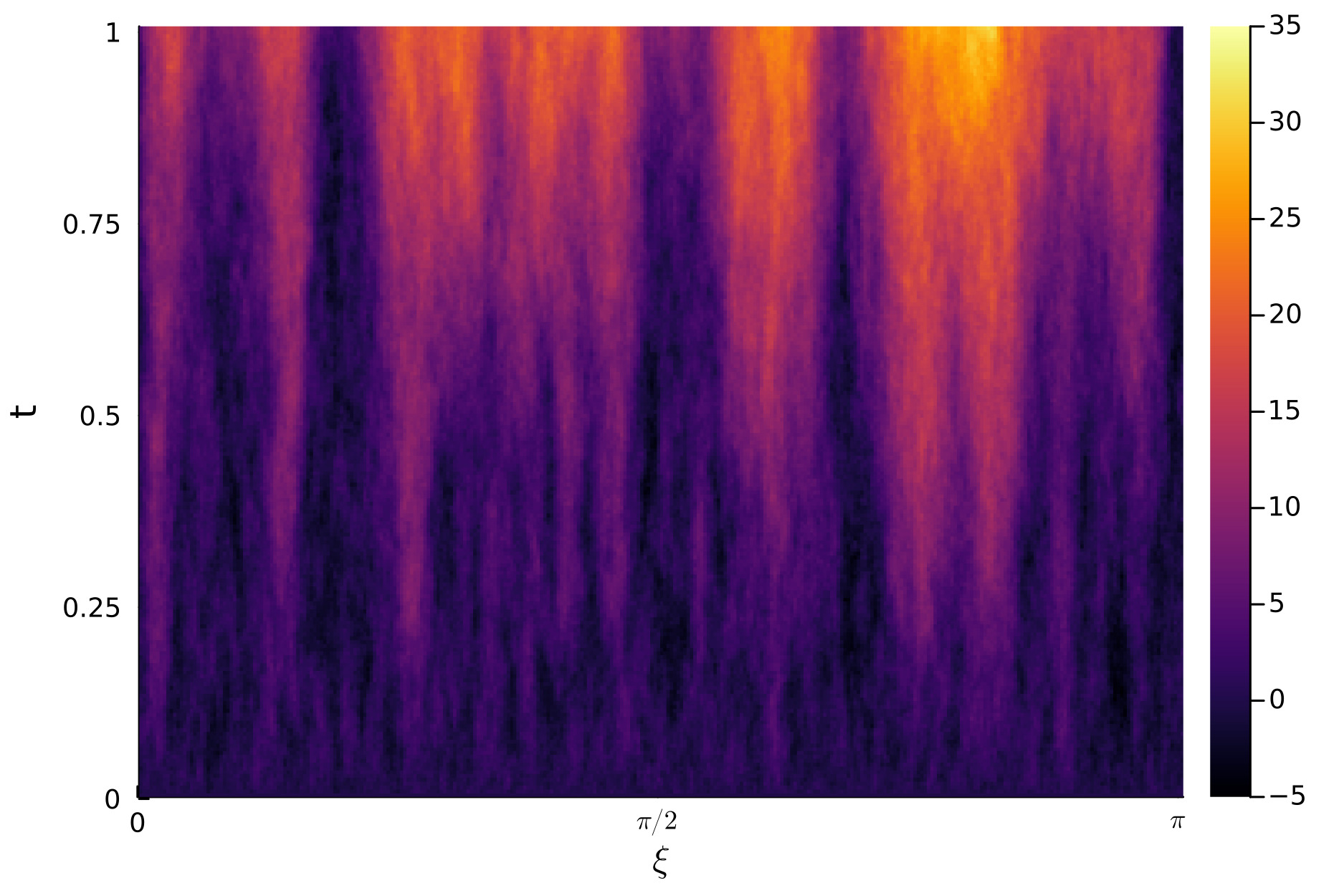}
    }
    \hfill
    \subfigure{
        \includegraphics[width=0.47\linewidth]{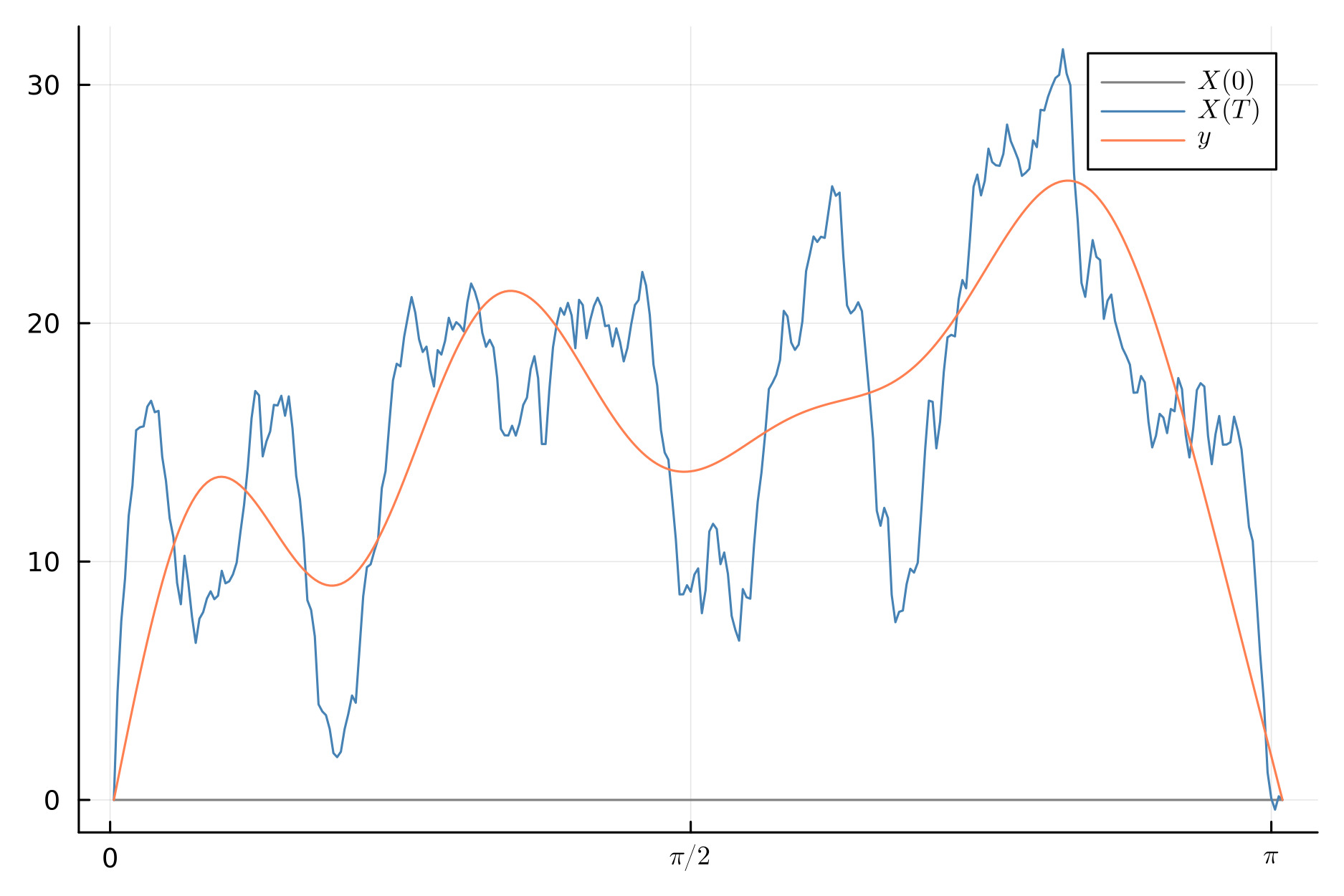}
    }
    \caption{Forward simulation of $X$. Left: Heatmap of a sample path $X(t,\xi)$ of Equation \eqref{eq: MichaelisMenten} with parameters as specified in \eqref{eq: MM_params}. Right: The states $X_0$ and $X_T$ and the observation $y = P_k X_T$, $k=10$. }
    \label{fig: MM_forward}
\end{figure}

\subsection{Numerical illustrations}

We illustrate the performance of Algorithm \ref{alg: MH_sampler1} with numerical examples for two distinct stochastic reaction-diffusion equations.

\begin{example}
\label{ex: MichaelisMenten}
Assume the SPDE given by
\begin{align}
\label{eq: MichaelisMenten}
            \df X_t &= \left[  \eta A X_t + \dfrac{ \zeta_1 X^2_t}{1 + \zeta_2 X^2_t} \right] \df t + \sigma \df W_t,~ X(0) = x_0,
\end{align}
with Dirichlet Laplacian $A$, white noise covariance operator $Q = \sigma^2 \text{Id}$ and non-linearity $F$ generated by
\begin{align}
\label{eq: MM_nonlinearity}
f(x) = \dfrac{\zeta_1 x^2}{1 + \zeta_2 x^2}.
\end{align}

Reactions terms of the form \eqref{eq: MM_nonlinearity} are present for example in predator-prey models (\cite{Holling1959Components}, \cite{Dawes2013Derivation}), activator-inhibitor models (\cite{Young1984Local}, \cite{Pasemann2021Diffusivity}) or Michaelis-Menten kinetics (\cite{Johnson2011Original}).
Typically, the given examples model multiple interacting populations with a system of coupled SPDEs and non-linearity \eqref{eq: MM_nonlinearity} depending linearly on another population. 
However, for the sake of simplicity, we restrict our attention to the case where such additional populations are assumed to be constant, leading to a reaction-diffusion equation of the form \eqref{eq: MichaelisMenten}.

Following Example \ref{ex: reaction_diffusion_equations}, Equation \eqref{eq: MichaelisMenten} is diagonalisable such that the condition \eqref{eq: q_ja_jbound} is met. Moreover, since $F$ is bounded and $Q$ is boundedly invertible, it follows that both Assumption \ref{ass: basic_assumptions_SPDE} and Assumption \ref{ass: absolute_continuity} are satisfied.
We assume one partial observation $y \in \R^k$ given at time $T$ by the spectral projection $y = P_k X_T$ as defined in \eqref{eq: Pk}. By Proposition \ref{prop: LPkPw}, Assumption \ref{ass: Xcrc_limit_behavior} is then also satisfied. 

For our numerical implementation, we set the parameters of Equation \eqref{eq: MichaelisMenten} as 
\begin{align}
\label{eq: MM_params}
[\eta, \zeta_1, \zeta_2, \sigma] = [3 \times 10^{-3}, 3, 0.1, 1].
\end{align}
Figure \ref{fig: MM_forward} shows the heatmap of one sample path of $X$ on the time interval $[0,T] = [0,1]$ with initial value $x_0 \equiv 0$. The path is sampled based on a spectral Galerkin approximation, using the first $100$ eigenfunctions, and a semi-implicit Euler-Maruyama scheme to approximate the resulting SODE with time steps $\Delta t = 0.005$.
For the observation $y = P_k X_T$ we set $k = 10$. 
The initial state $x_0$, true state $X_T$ and observed state $y$ are shown on the right-hand side in Figure \ref{fig: MM_forward}.

\begin{figure}[ht]
    \centering
    \includegraphics[width=0.5\textwidth]{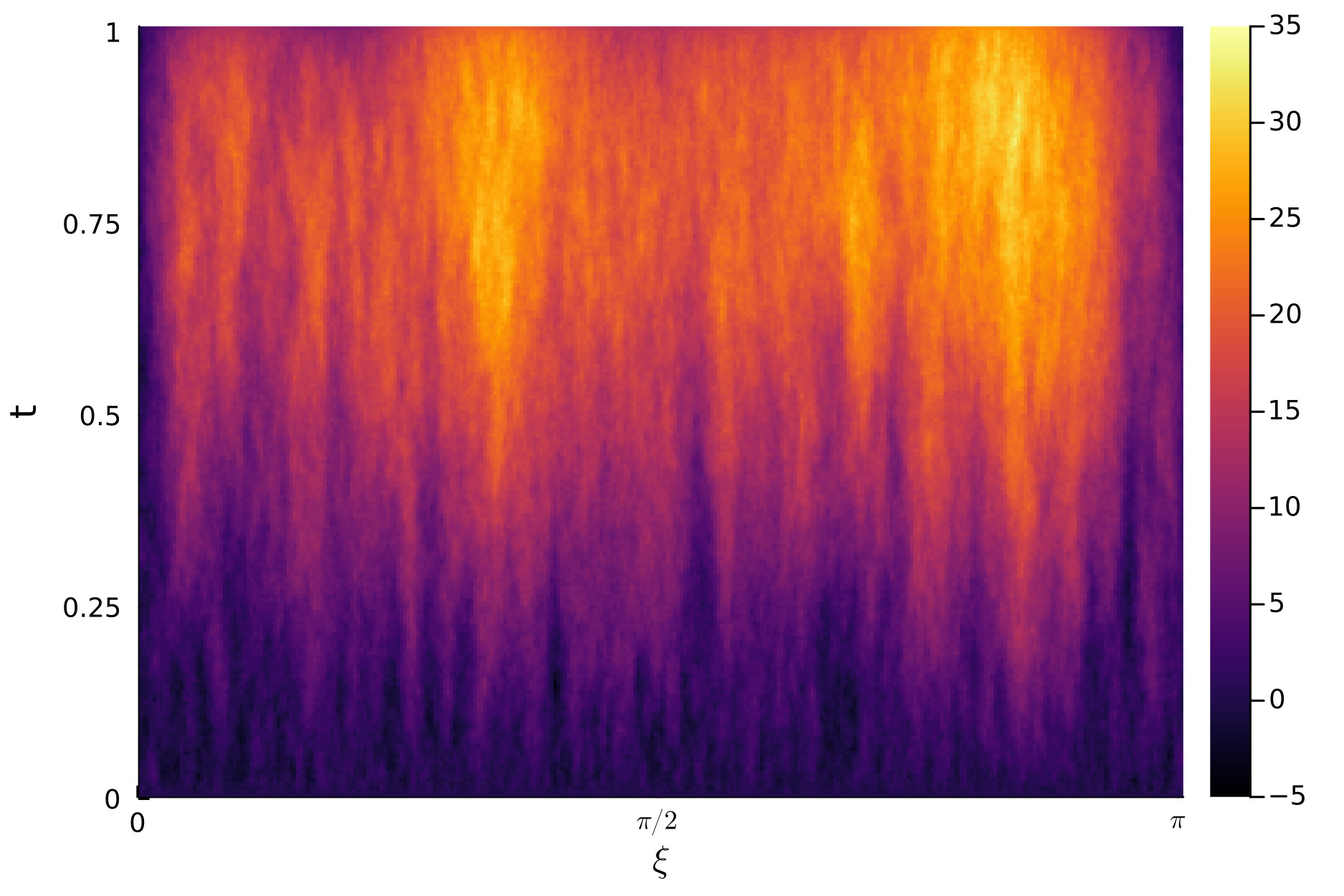} 
    \caption{Heatmap of a sample path $\Xcrc(t,\xi)$ of the guided process corresponding to \eqref{eq: MichaelisMenten} with conditioning state $y = P_k X_T, k = 10$.}
    \label{fig: MM_guided}
\end{figure}

\begin{figure}[ht]
    \centering
    \includegraphics[width=0.5\textwidth]{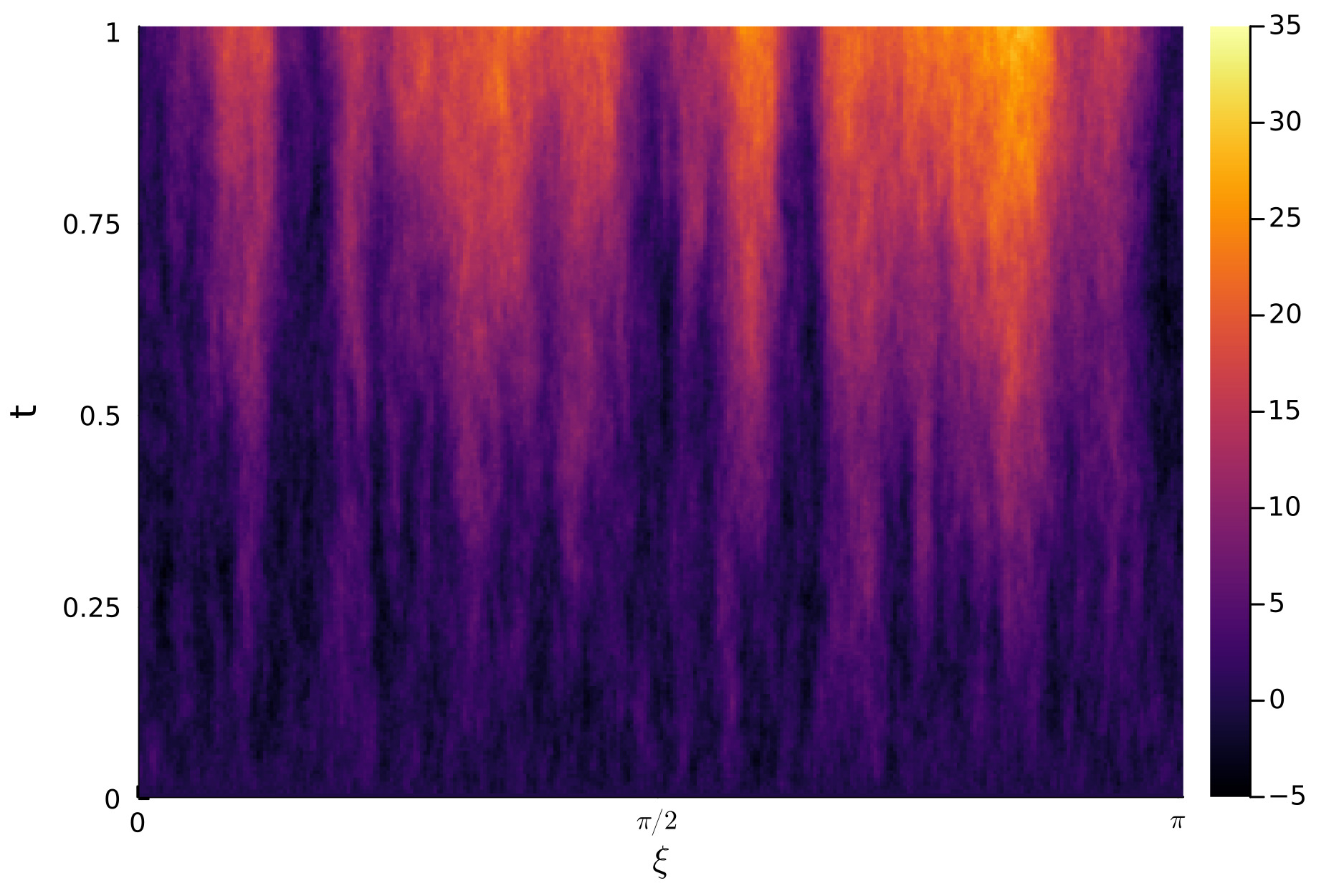} 
    \caption{Heatmap of the mean sample path of $100$ samples of the estimated diffusion bridge $\Xstr(t,\xi)$ of Equation \eqref{eq: MichaelisMenten} returned by Algorithm \ref{alg: MH_sampler1}.}
    \label{fig: MM_bridge}
\end{figure}

We employ Algorithm \ref{alg: MH_sampler1} to sample from the infinite-dimensional diffusion bridge $\Xstr$ of Equation \eqref{eq: MichaelisMenten}, conditioned on the observed state $y$.
In Figure \ref{fig: MM_guided}, we display a random sample path of the guided process $X^{\circ}$, simulated with the same SPDE solver and mesh size as the path of Figure \ref{fig: MM_forward}. 
It serves as the proposal distribution in the MH sampler. While indeed the process is forced to satisfy $L \Xcrc_t = y$, one can visually assess that the guided process looks different from the forward simulated path in Figure \ref{fig: MM_forward}, motivating the application of Algorithm \ref{alg: MH_sampler1}.

The MH sampler is run with step size $\beta = 0.08$ and $50 ~ 000$ iterations. The resulting acceptance rate of proposals equals $26 \%$.
Figure \ref{fig: MM_bridge} shows the mean sample path of the last $100$ samples of the estimated diffusion bridge distribution.
A visual comparison with the sample paths of the data generating process $X$ and the guided process $\Xcrc$ strongly suggests that Algorithm \ref{alg: MH_sampler1} correctly samples from the distribution of the diffusion bridge $\Xstr$.

\begin{figure}[ht]
    \centering
    \includegraphics[width=0.8\textwidth]{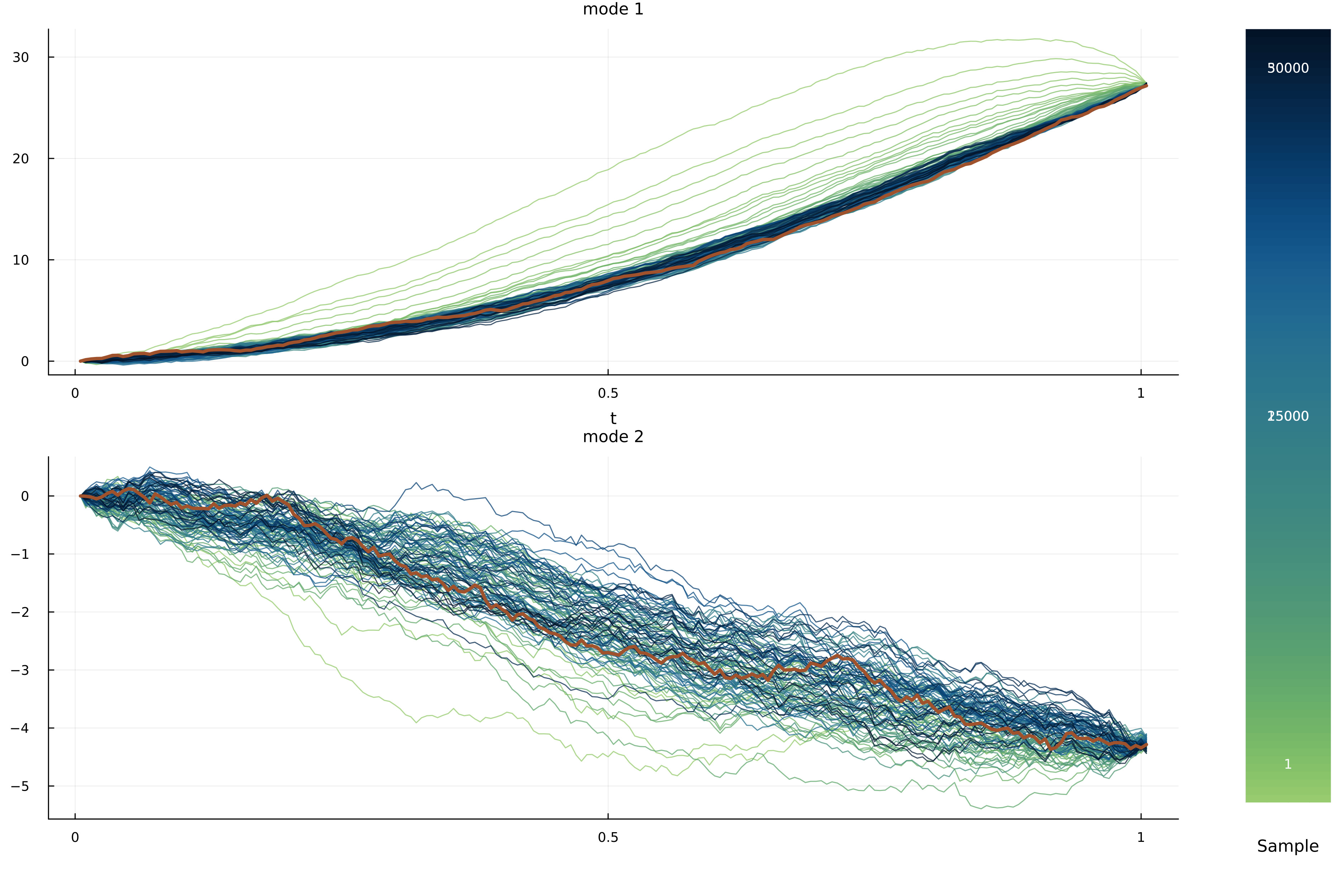} 
    \caption{Paths of two spectral modes of the diffusion bridge samples of Equation \eqref{eq: MichaelisMenten} returned by Algorithm \ref{alg: MH_sampler1}. Every $500$-th sample is shown. Green paths represent `earlier' samples in the Markov chain, whereas blue paths represent `later' samples. The orange path is the spectral mode of the data generating process $X$.}
    \label{fig: MM_spectralbridges}
\end{figure}

\begin{figure}[ht]
    \centering
    \includegraphics[width=0.8\textwidth]{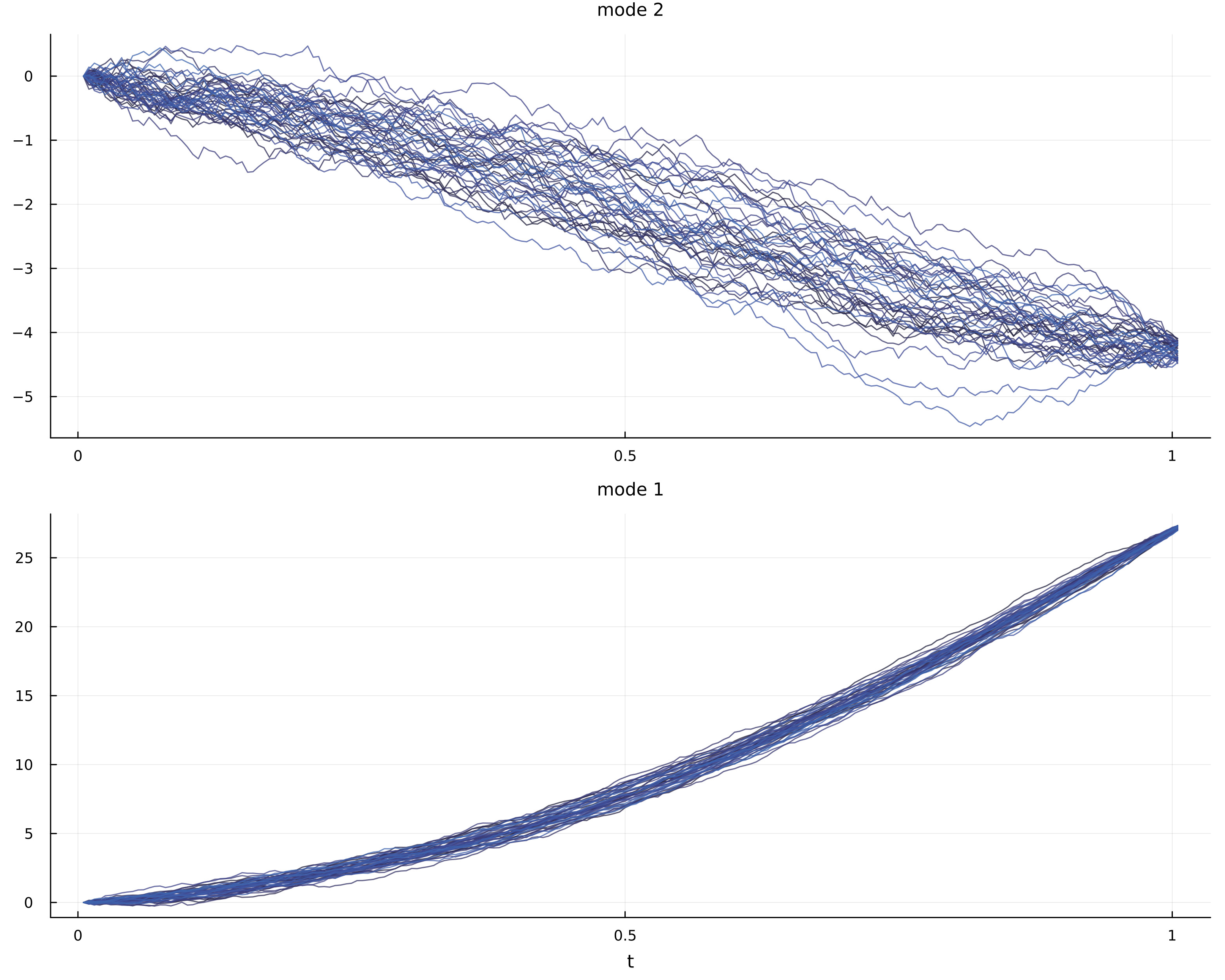} 
    \caption{Sample paths in the spectral modes $1$ and $2$ obtained by forward sampling the SPDE \eqref{eq: MichaelisMenten} with parameters as specified in \eqref{eq: MM_params}. Only paths that satisfy \eqref{eq: MMexperimentsconditioning} are kept.}
    \label{fig: MM_spectralforward}
\end{figure}

This is further supported by the display of the sampler's performance in Figures \ref{fig: MM_spectralbridges} and \ref{fig: MM_spectralforward}. For this, we let 
\begin{align}
\label{eq: j1j2}
    j_1 &= \arg \max_j x_j(T) = \arg \max_j \langle X_T, e_j \rangle, \\
    j_2 &= \arg \min_j x_j(T) = \arg \min_j \langle X_T, e_j \rangle
\end{align}
be the indices of the largest and smallest spectral mode of $X_T$. In our example, this corresponds to $j_1 = 1$ and $j_2 = 2$. 
Out of the $50 ~ 000$ sampled paths, Figure \ref{fig: MM_spectralbridges} shows every $500$-th path of the spectral modes $\{\xstr_{j_k}(t): t \in [0,T]\}, k = 1,2$, as well as the `true' path of the data generating process $x_j(t)$. 

For a comparison, we plot in Figure \ref{fig: MM_spectralforward} the `typical' sample path of the diffusion `bridge' conditioned on hitting $x_{j_k}(T), k =1,2$. 
This is achieved by forward sampling $100 ~ 000$ paths $Y$ of Equation \eqref{eq: MichaelisMenten} and only keeping those sample paths that satisfy 
\begin{align}
\label{eq: MMexperimentsconditioning}
    |y_{j_k}(T) - x_{j_k}(T)| < \varepsilon, \quad k = 1,2.
\end{align} Here we choose $\varepsilon = 0.2$. 
Note that the other spectral modes are disregarded. Even still, due to the high dimensionality of the problem, this is a rare event - from the $100 ~ 000$ forward samples all but $47$ samples are discarded. A comparison between Figures \ref{fig: MM_spectralbridges} and \ref{fig: MM_spectralforward} shows that the sample paths resemble each other well.
\end{example}

\begin{figure}[ht]
    \centering
    \subfigure{
        \includegraphics[width=0.47\linewidth]{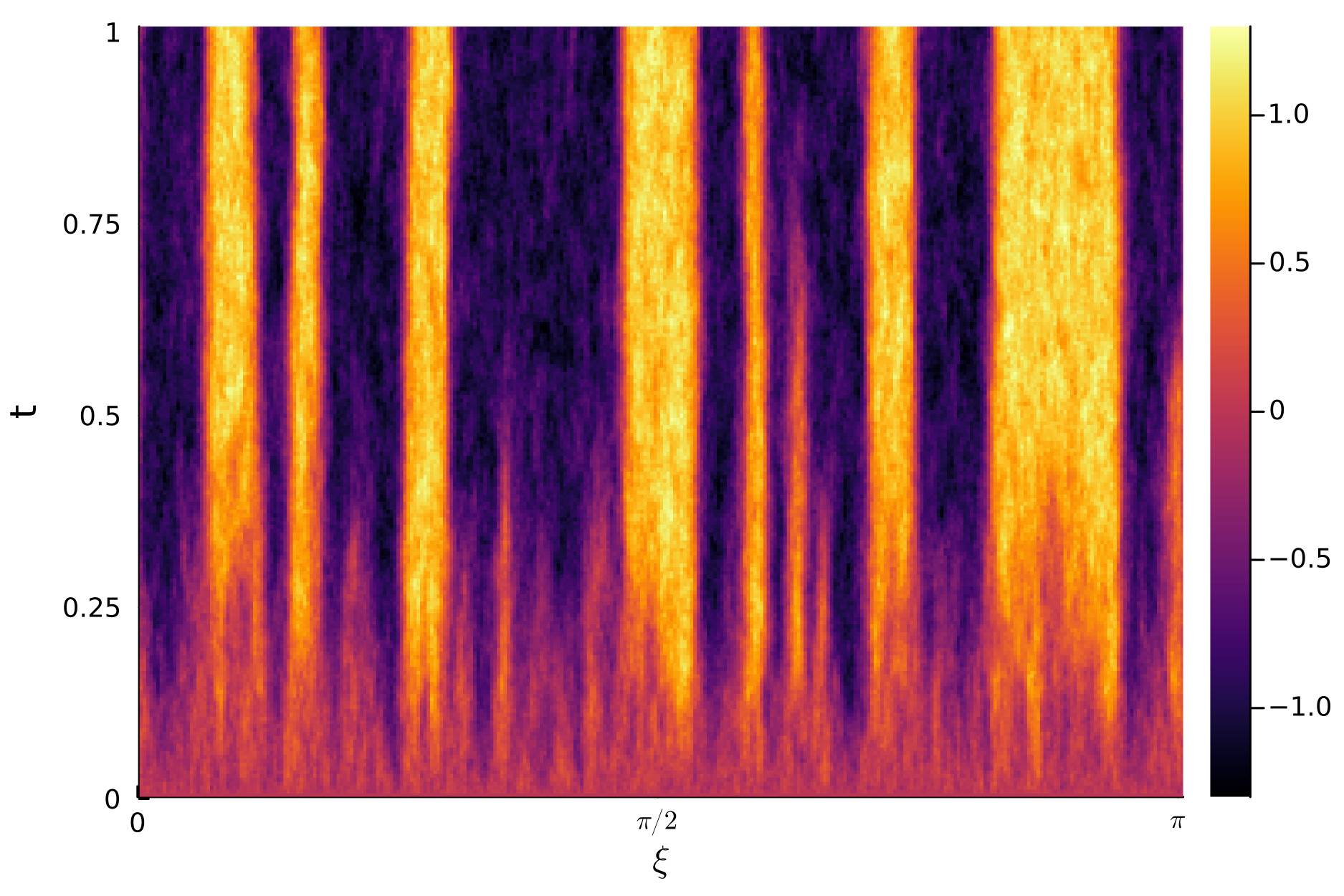}
    }
    \hfill
    \subfigure{
        \includegraphics[width=0.47\linewidth]{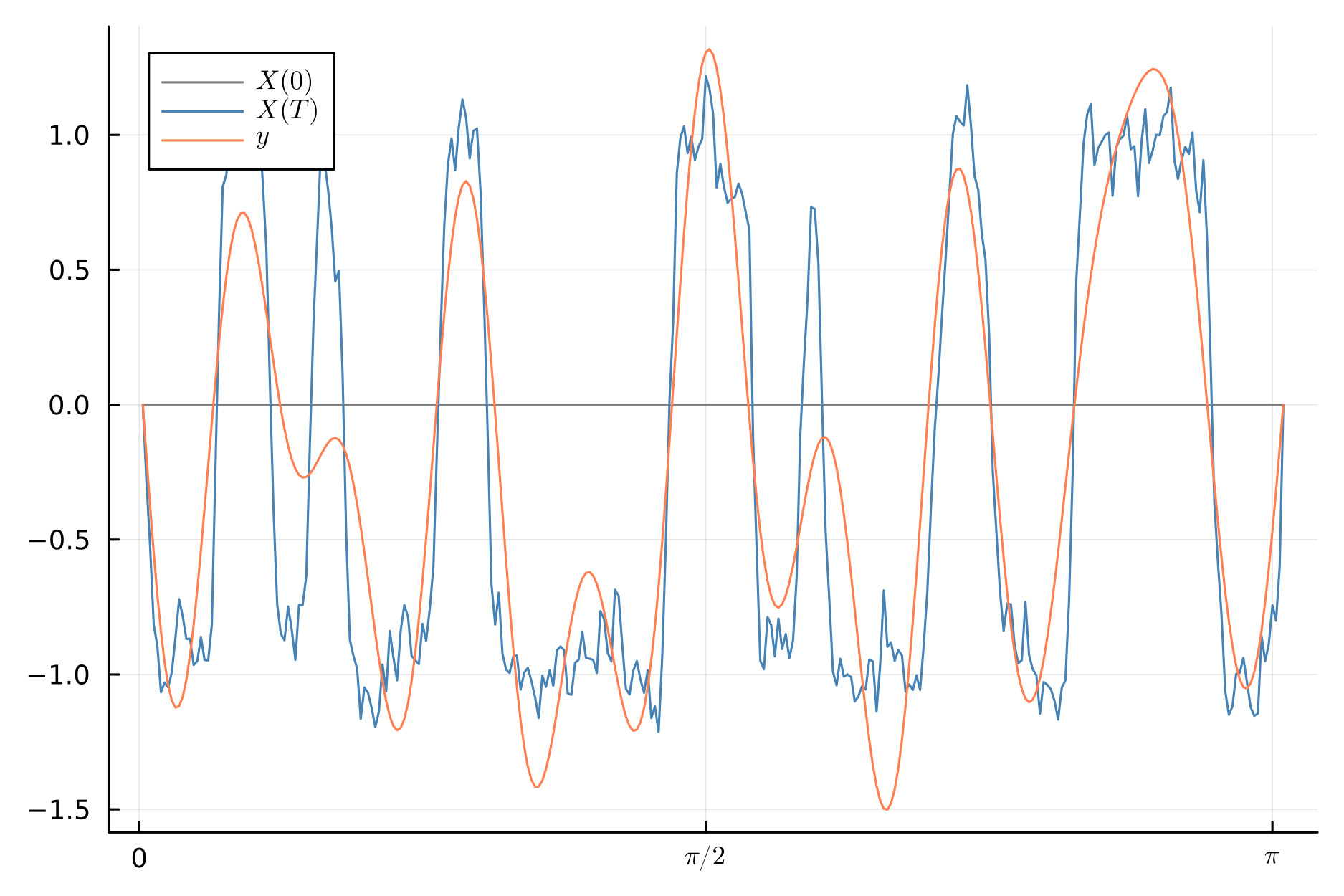}
    }
    \caption{Forward simulation of $X$. Left: Heatmap of a sample path $X(t,\xi)$ of Equation \eqref{eq: AllenCahn} with parameters as specified in \eqref{eq: AC_params}. Right: The states $X_0$ and $X_T$ and the observation $y = P_k X_T$, $k=20$. }
    \label{fig: AC_forward}
\end{figure}

\begin{example}[Stochastic Allen-Cahn Equation]
\label{ex: AllenCahn}
Consider the stochastic Allen-Cahn equation 
\begin{align}
\label{eq: AllenCahn}
            \df X_t &= \left[  \eta A X_t + \zeta (X_t - X^3_t) \right] \df t + Q_{\sigma}^{\fsqrt} \df W_t,~ X(0) = x_0,
\end{align}
with Dirichlet Laplacian $A$, non-linearity $F$ generated by $f(x) = x - x^3$, diffusion parameter $\eta > 0$ and reaction rate $\zeta > 0$.
Here, we choose a diagonalisable trace-class operator $Q_{\sigma}$ with eigenvalues 
\begin{align*}
    q_j = \sigma_0^2 (\rho^{-2} + (2 \pi j)^2)^{-(1/2 + \nu)}
\end{align*}
parametrized by $\sigma = (\sigma_0, \rho, \nu).$ 
The operator $Q_{\sigma}$ then corresponds to the Hilbert-Schmidt integral operator 
\begin{align*}
    (Q_{\sigma}x)(\xi) = \int_{[0, \pi]} q(|\xi - \xi'|) x(\xi') \, \df \xi', \quad x \in L^2([0,\pi]),
\end{align*}
where $q(r)$ is the Matérn covariance function $q(r) = \sigma_0^2 \rho^{-1} r K_{\nu}(r/\rho)$, with $K_{\nu}$ denoting the second kind modified Bessel function of order $\nu$. This follows from the fact that a Gaussian variable on $H$ with covariance operator $Q_{\sigma}$ is itself the weak solution to a stochastic fractional Laplace equation, see for example \cite{Borovitskiy2020Matern}, Theorem 5. 
The parameter $\sigma_0$ corresponds to the marginal variance of $q(r)$, whereas $\rho$ is a precision parameter. The parameter $\nu$ controls the `smoothness' of $Q_{\sigma}$. Specifically, it can be shown that a Gaussian variable on $H$ with covariance operator $Q_{\sigma}$ is $\nu$-times mean differentiable.

Just as in Example \ref{ex: MichaelisMenten}, $A$ and $Q_{\sigma}$ satisfy the Assumptions \ref{ass: basic_assumptions_SPDE} and \ref{ass: Xcrc_limit_behavior}.
However, since $F$ is not bounded, Assumption \ref{ass: basic_assumptions_SPDE}(iv) is not met. Moreover, the unbounded nature of $F$ and the fact that $Q_{\sigma}$ is trace-class renders Assumption \ref{ass: absolute_continuity} difficult to verify as Lemma \ref{lem: bounds_gh} is no longer applicable. 
Nonetheless, the results of our numerical experiments indicate that these assumptions are stricter than necessary for the results of Theorem \ref{thm: absolute_continuity} to hold.

\begin{figure}[ht]
    \centering
    \includegraphics[width=0.5\textwidth]{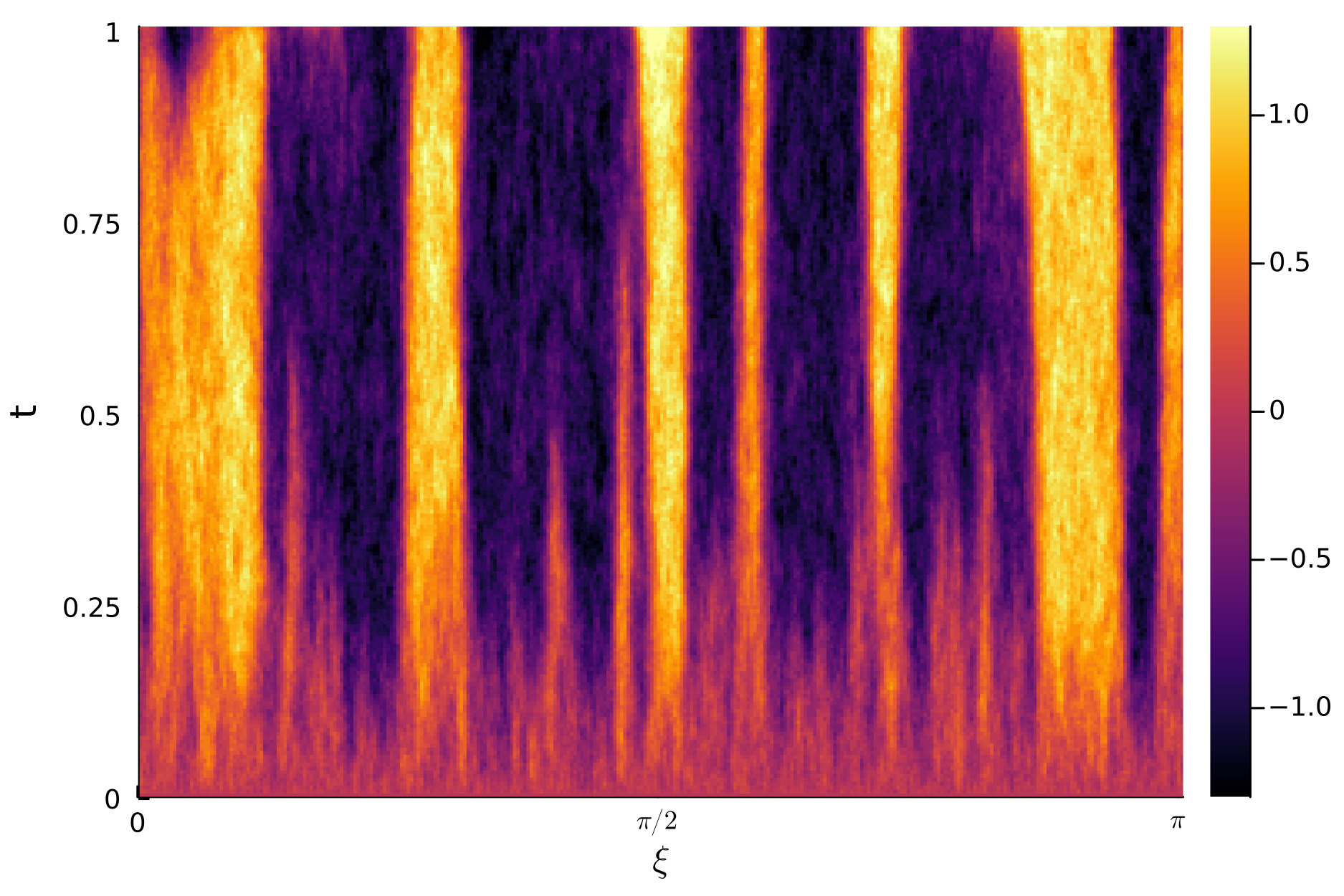} 
    \caption{Heatmap of a sample path $\Xcrc(t,\xi)$ of the guided process corresponding to \eqref{eq: AllenCahn} with conditioning state $y = P_k X_T, k = 20$.}
    \label{fig: AC_guided}
\end{figure}

\begin{figure}[ht]
    \centering
    \includegraphics[width=0.5\textwidth]{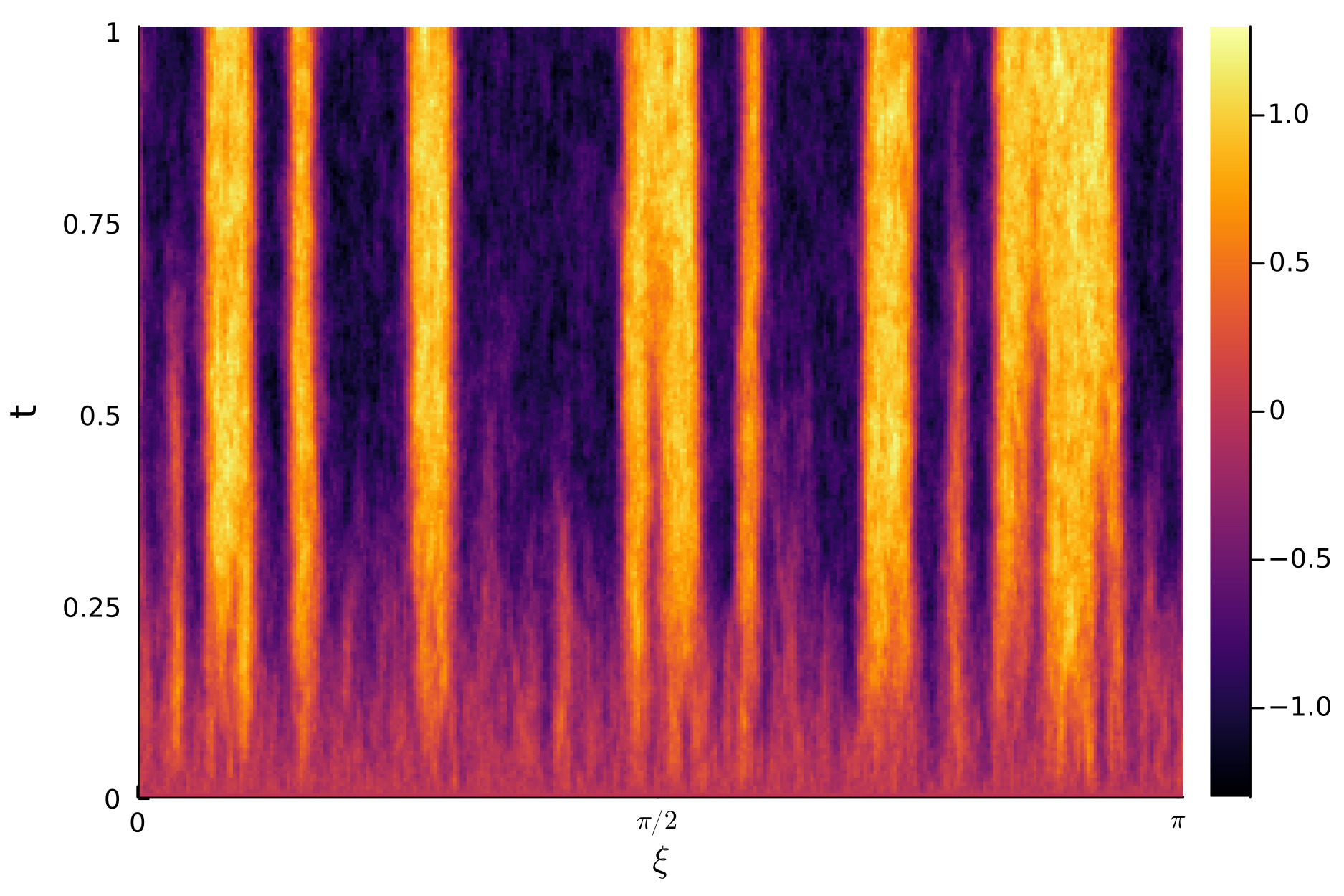} 
    \caption{Heatmap of the mean sample path of $100$ samples of the estimated diffusion bridge $\Xstr(t,\xi)$ of Equation \eqref{eq: AllenCahn} returned by Algorithm \ref{alg: MH_sampler1}.}
    \label{fig: AC_bridge}
\end{figure}

We follow the experimental setup of Example \ref{ex: MichaelisMenten}, using the observation scheme $y = P_k X_T$ with $k=20$ and the same numerical solver on an identical space-time grid.
For the parametrisation of \eqref{eq: AllenCahn} we set
\begin{align}
\label{eq: AC_params}
[\eta, \zeta, \sigma_0, \rho, \nu] = [2 \times 10^{-3}, 10, 10^7, 5 \times 10^{-6}, 1].
\end{align}
Figure \ref{fig: AC_forward} shows the heatmap of a corresponding sample path $X$. It displays the typical pattern formation of a noisy Allen-Cahn equation with steady states at $\pm 1$. The initial state $x_0$, true state $X_T$ and observed state $y$ are shown on the right in Figure \ref{fig: AC_forward}.

In Figure \ref{fig: AC_guided} we display a random sample path of the guided process $X^{\circ}$, whereas Figure \ref{fig: AC_bridge} shows the mean sample path of $100$ samples of the estimated diffusion bridge distribution after running Algorithm \ref{alg: MH_sampler1} with step size $\beta = 0.07$, $50 ~ 000$ iterations and a resulting $27 \%$ acceptance rate.

A close inspection shows that the guided process fails to capture steady states that are not represented in the data $y$, most notably around $\xi \approx \pi/8$. In contrast, the diffusion bridge mean captures all steady states displayed by the data generating path $X$, with a separation between the states that is characteristic for the Allen-Cahn equation. 

In Figure \ref{fig: AC_spectralbridges} we display the paths of the spectral modes $\{\xstr_{j_k}(t): t \in [0,T], k = 1,2 \}$, with $j_1, j_2$ as defined in \eqref{eq: j1j2}. In this case, we have $j_1 = 5$ and $j_2 = 11$.
For comparison, Figure \ref{fig: AC_spectralforward} shows the `typical' spectral sample paths of the diffusion `bridge', obtained by sampling $100 ~ 000$ paths $Y$ of Equation \eqref{eq: AllenCahn} and only keeping those paths that satisfy \eqref{eq: MMexperimentsconditioning} with $\varepsilon = 0.05$. This results in $52$ out of the $100~000$ samples being kept.
The comparison of Figures \ref{fig: AC_spectralbridges} and \ref{fig: AC_spectralforward} again shows that the 'typical' spectral path obtained through this heuristic sampling approach matches the output of Algorithm \ref{alg: MH_sampler1} quite well.

\begin{figure}[ht]
    \centering
    \includegraphics[width=0.8\textwidth]{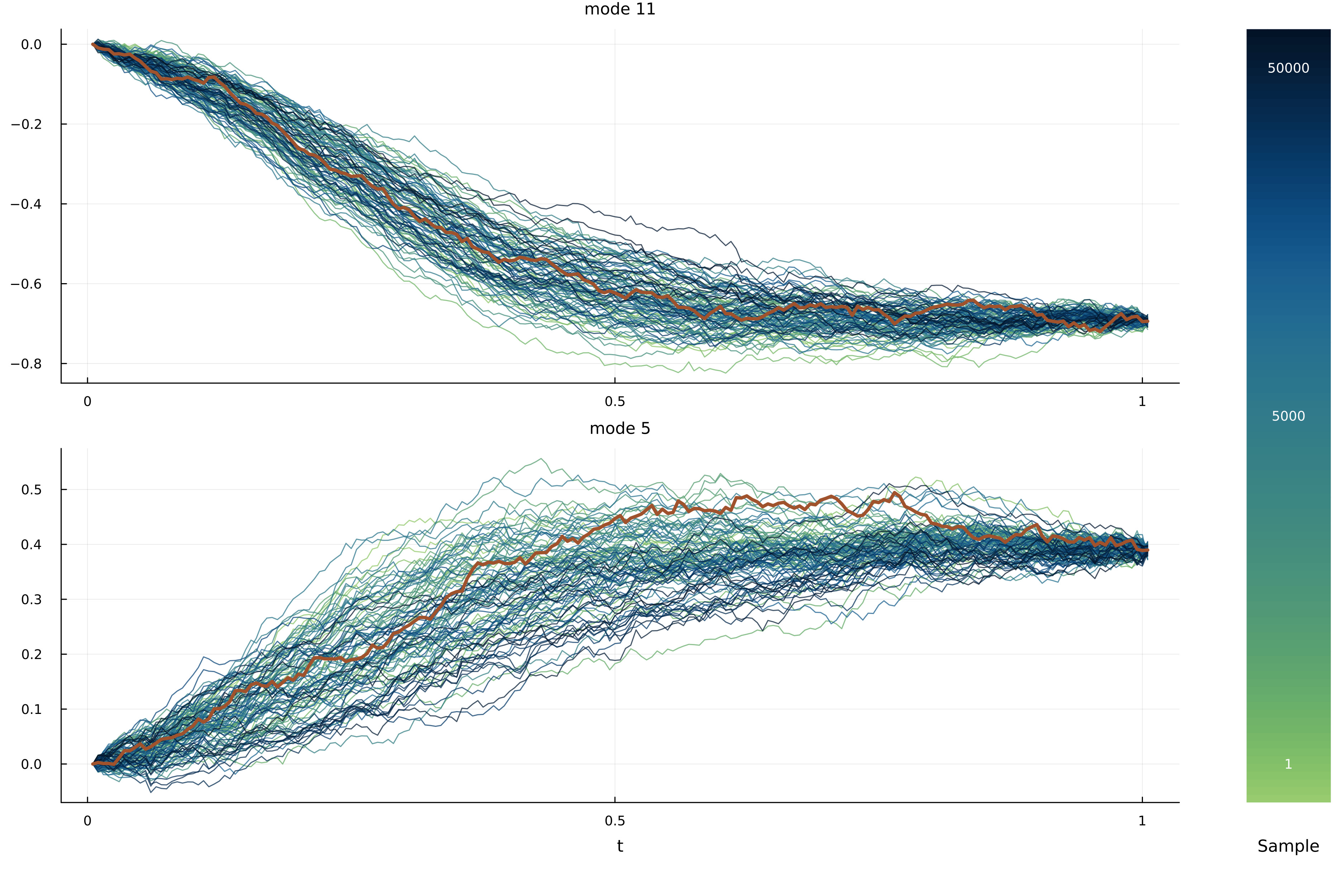} 
    \caption{Paths of two spectral modes of the diffusion bridge samples of Equation \eqref{eq: AllenCahn} returned by Algorithm \ref{alg: MH_sampler1}. Every $500$-th sample is shown. Green paths represent `earlier' samples in the Markov chain, whereas blue paths represent `later' samples. The orange path is the spectral mode of the data generating process $X$.}
    \label{fig: AC_spectralbridges}
\end{figure}

\begin{figure}[ht]
    \centering
    \includegraphics[width=0.8\textwidth]{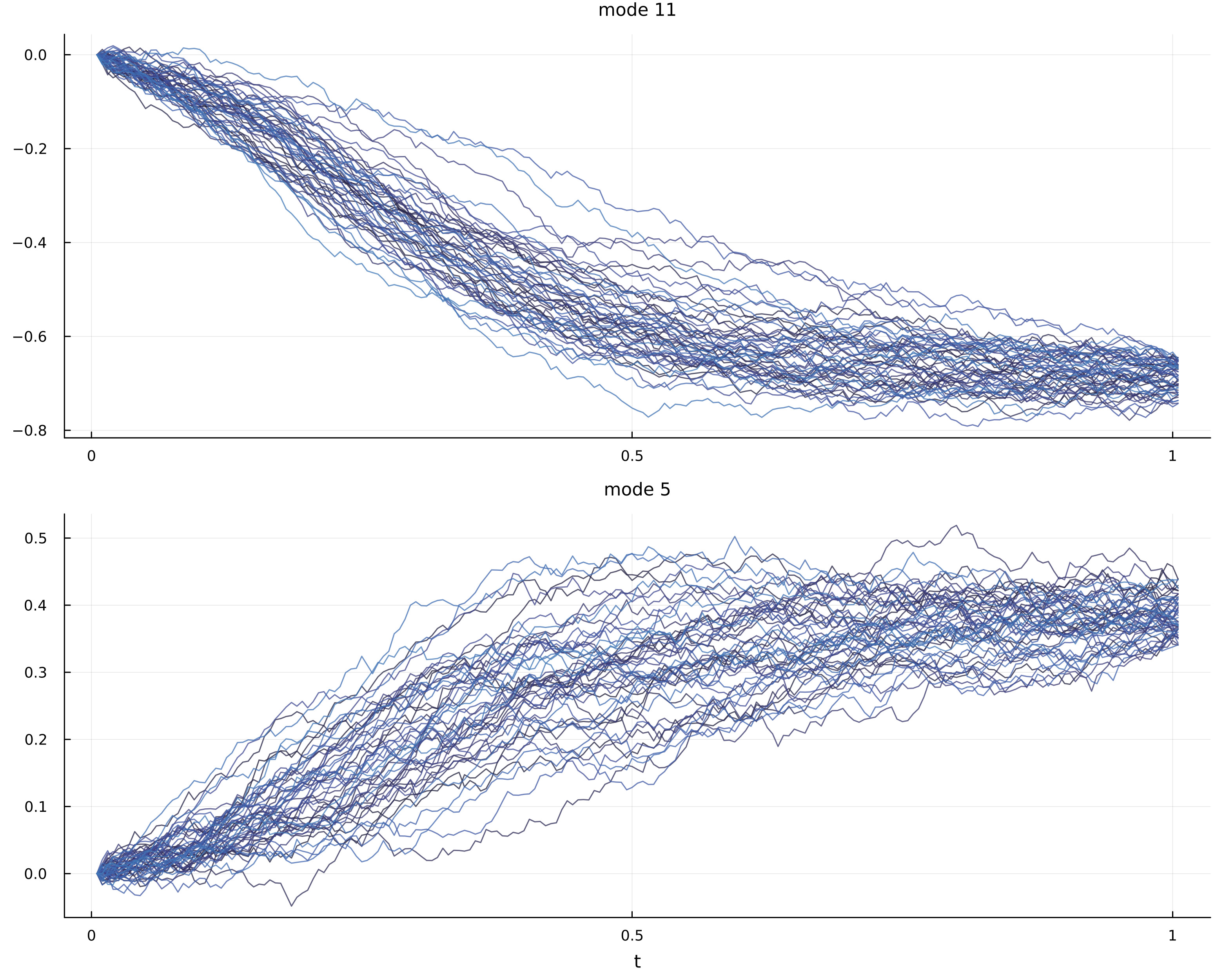} 
    \caption{Sample paths in the spectral modes $5$ and $11$ obtained by forward sampling the SPDE \eqref{eq: AllenCahn} with parameters as specified in \eqref{eq: AC_params}. Only paths that satisfy \eqref{eq: MMexperimentsconditioning} are kept.}
    \label{fig: AC_spectralforward}
\end{figure}

We remark that a precise assessment of the accuracy of the bridge sampling procedure is hard to obtain in this infinite-dimensional setting. Therefore, we provide in Appendix \ref{app: C} additional numerical validation that the result in Theorem \ref{thm: absolute_continuity} remains true when Assumptions \ref{ass: basic_assumptions_SPDE}(iii) and \ref{ass: absolute_continuity} are not necessarily met. 
\end{example}

\section{Proof of Theorem \ref{thm: Xcrc_limit_behavior}}
\label{sec: 6_proof_limit_behavior}
To ease readability of the proof of Theorem \ref{thm: Xcrc_limit_behavior}, we write $\Deltat = T-t$ for any $t < T$.
The structure of the proof follows that of the proof of Proposition 2.2 in \cite{vdMeulenBierkensSchauer2020Simulation}. 
It can be outlined as follows:
\begin{itemize}
    \item[1.] Given $Y_t= y - L_{\Deltat} \Xcrc_t$, define the Lyapunov function $V(t,Y) = \fsqrt \langle R^{-1}_{\Deltat} Y_t, Y_t \rangle.$
    \item[2.] Apply Itô's lemma on $V(t,Y)$.
    \item[3.] Bound the terms appearing in $\df V(t,Y)$.
    \item[4.] Apply a Gronwall-type inequality.
\end{itemize}

Steps 1 and 2 are covered in the following lemma.
\begin{lemma}
Let $Y_t = y - L_{\Deltat} \Xcrc_t$ and let $ V(t,Y_t) = \fsqrt \langle R^{-1}_{\Deltat} Y_t, Y_t \rangle.$
Then
\begin{align}
\begin{split}
\label{eq: dVtY}
    \df V(t,Y_t) &= -  \langle   L^*_{\Deltat} R_{\Deltat}^{-1} Y_t,   F(t,\Xcrc_t) \rangle \df t \\
    &\quad + \langle Q^{\fsqrt} L^*_{\Deltat} R_{\Deltat}^{-1} Y_t, \df \tilde{W}_t \rangle - \dfrac{1}{2} \langle Q^{\fsqrt} L_{\Deltat}^* R_{\Deltat}^{-1} Y_t, Q^{\fsqrt} L_{\Deltat}^* R_{\Deltat}^{-1} Y_t \rangle \df t \\
    &\quad + \dfrac{1}{4} \tr \left[ R_{\Deltat}^{-1} L_{\Deltat} Q L^*_{\Deltat} \right] \df t, \quad \Pcrc\text{-a.s.},
\end{split}
\end{align}
where $\tilde{W}$ is the cylindrical Wiener process $\tilde{W}_t = - \Wcrc_t$.
\end{lemma}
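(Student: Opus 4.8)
The plan is to remove the infinite-dimensional character of the problem and reduce everything to a \emph{finite-dimensional} Itô formula in $\R^k$. The key observation is that, although $\Xcrc$ is only a mild solution and hence not a semimartingale in $H$, the $\R^k$-valued process $M_t := L_{\Deltat}\Xcrc_t = LS_{T-t}\Xcrc_t$ is a genuine Itô process. Concretely, I would apply $LS_{T-t}$ to the mild-solution representation of $\Xcrc$ and exploit the semigroup identity $S_{T-t}S_{t-s}=S_{T-s}$ to cancel the floating semigroup inside the convolutions, obtaining
\[
M_t = L_T x_0 + \int_0^t L_{T-s}\bigl(F(s,\Xcrc_s)+QG(s,\Xcrc_s)\bigr)\,\df s + \int_0^t L_{T-s}Q^{\fsqrt}\,\df\Wcrc_s,
\]
with $G$ as in \eqref{eq: G}. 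Since $G(s,\Xcrc_s)=L^*_{T-s}R^{-1}_{T-s}(y-L_{T-s}\Xcrc_s)$, the process $Y_t=y-M_t$ is an $\R^k$-valued semimartingale with explicit coefficients, and no infinite-dimensional Itô calculus is needed.

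Substituting $G$ gives
\[
\df Y_t = -L_{\Deltat}F(t,\Xcrc_t)\,\df t - L_{\Deltat}QL^*_{\Deltat}R^{-1}_{\Deltat}Y_t\,\df t - L_{\Deltat}Q^{\fsqrt}\,\df\Wcrc_t .
\]
I would then apply the ordinary Itô formula to $V(t,Y)=\fsqrt\langle R^{-1}_{\Deltat}Y,Y\rangle$, with $\nabla_Y V = R^{-1}_{\Deltat}Y$ and $\nabla^2_Y V = R^{-1}_{\Deltat}$. Pairing $\nabla_Y V$ with the drift, and repeatedly using the adjoint identity $\langle R^{-1}_{\Deltat}Y, L_{\Deltat}v\rangle_{\R^k}=\langle L^*_{\Deltat}R^{-1}_{\Deltat}Y, v\rangle_H$, turns the two drift pieces into $-\langle L^*_{\Deltat}R^{-1}_{\Deltat}Y_t, F(t,\Xcrc_t)\rangle\,\df t$ and $-\langle Q^{\fsqrt}L^*_{\Deltat}R^{-1}_{\Deltat}Y_t, Q^{\fsqrt}L^*_{\Deltat}R^{-1}_{\Deltat}Y_t\rangle\,\df t$; pairing with the diffusion and writing $\tilde W=-\Wcrc$ yields the martingale term $\langle Q^{\fsqrt}L^*_{\Deltat}R^{-1}_{\Deltat}Y_t, \df\tilde W_t\rangle$. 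The diffusion coefficient $\Sigma_t=-L_{\Deltat}Q^{\fsqrt}$ is Hilbert–Schmidt from $H$ to $\R^k$ with $\Sigma_t\Sigma_t^*=L_{\Deltat}QL^*_{\Deltat}$, so the covariation is $\df[Y]_t=L_{\Deltat}QL^*_{\Deltat}\,\df t$ and the second-order term $\fsqrt\tr[\nabla^2_Y V\,\df[Y]_t]$ produces the trace term $\propto \tr[R^{-1}_{\Deltat}L_{\Deltat}QL^*_{\Deltat}]\,\df t$.

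The remaining contribution is the explicit time dependence $\partial_t V=\fsqrt\langle \tfrac{d}{dt}R^{-1}_{\Deltat}Y_t,Y_t\rangle$. Differentiating \eqref{eq: def_Q(t)} gives $\tfrac{d}{d\Delta}Q_\Delta = S_\Delta QS^*_\Delta$, hence $\tfrac{d}{d\Delta}R_\Delta = L_\Delta QL^*_\Delta$, and since $\Deltat=T-t$ one finds $\tfrac{d}{dt}R^{-1}_{\Deltat}=R^{-1}_{\Deltat}L_{\Deltat}QL^*_{\Deltat}R^{-1}_{\Deltat}$ (the sign coming from the chain rule). This yields $+\fsqrt\langle Q^{\fsqrt}L^*_{\Deltat}R^{-1}_{\Deltat}Y_t, Q^{\fsqrt}L^*_{\Deltat}R^{-1}_{\Deltat}Y_t\rangle\,\df t$, which combines with the quadratic drift term above to leave exactly the coefficient $-\fsqrt\langle Q^{\fsqrt}L^*_{\Deltat}R^{-1}_{\Deltat}Y_t, Q^{\fsqrt}L^*_{\Deltat}R^{-1}_{\Deltat}Y_t\rangle$ stated in \eqref{eq: dVtY}. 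Collecting the four contributions gives the asserted identity.

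I expect the genuine obstacle to be the rigorous justification of the finite-dimensional reduction rather than the Itô bookkeeping. One must verify that $S_{T-t}\Xcrc_t$ is a well-defined $H$-valued process, equivalently that the stochastic convolution $\int_0^t S_{T-s}Q^{\fsqrt}\,\df\Wcrc_s$ is well defined for $t<T$; this holds because its covariance equals $Q_T-Q_{\Deltat}$, which is trace class under Assumption \ref{ass: basic_assumptions_SPDE}(iii). One also needs a stochastic Fubini argument to interchange the bounded operator $LS_{T-t}$ with the time and stochastic integrals. Finally, $R^{-1}_{\Deltat}$ (and its differentiability) only exists for $t<T$, which is exactly where invertibility of $R_{\Deltat}$, guaranteed on $(0,T]$ by Assumption \ref{ass: Xcrc_limit_behavior}, enters. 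Once these points are settled, the statement follows from routine finite-dimensional calculus.
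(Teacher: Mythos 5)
Your proposal is correct and follows essentially the same route as the paper: both derive the $\R^k$-valued SDE for $Y_t$ by applying $LS_{T-t}$ to the mild-solution formula and cancelling semigroups, both compute $\frac{\df}{\df t}R^{-1}_{\Deltat}=R^{-1}_{\Deltat}L_{\Deltat}QL^*_{\Deltat}R^{-1}_{\Deltat}$ from $\df R_t=L_tQL^*_t\,\df t$, and both finish with finite-dimensional It\^o calculus (you apply the time-dependent It\^o formula to $V$ directly, while the paper first computes $\df(R^{-1}_{\Deltat}Y_t)$ and then uses the product rule on $\langle R^{-1}_{\Deltat}Y_t,Y_t\rangle$ --- an immaterial organizational difference). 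The one place you hedge, writing ``$\propto$'' for the trace term, deserves attention: carried out, your second-order contribution is $\fsqrt\tr\left[R^{-1}_{\Deltat}L_{\Deltat}QL^*_{\Deltat}\right]\df t$ rather than the $\frac{1}{4}$ appearing in the statement; the paper reaches $\frac{1}{4}$ by placing a factor $\frac{1}{2}$ in front of the covariation term of the product rule, whereas the covariation $\df[R^{-1}_{\Deltat}Y,Y]_t$ equals $\tr\left[R^{-1}_{\Deltat}L_{\Deltat}QL^*_{\Deltat}\right]\df t$ with no such factor, so your constant appears to be the correct one --- a discrepancy that is harmless downstream, since only the order $\Deltat^{-1}$ of the trace term enters the proof of Theorem \ref{thm: Xcrc_limit_behavior}.
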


\begin{proof}
Since $\Xcrc$ is a mild solution to the SPDE \eqref{eq: dXcrc}, it satisfies
\begin{align*}
\Xcrc_t =  S_t x_0 + \int_0^t S_{t-s} \left[ F(s,\Xcrc_s)+ Q G(s,\Xcrc_s) \right]\, \df s  + \int_0^t S_{t-s} Q^{\fsqrt} \, \df W^{\circ}_s, \quad \Pcrc\text{-a.s.}
\end{align*}
Hence, using the semigroup property $S_{T-t} S_{t-s} = S_{T-s}$, it follows that
\begin{align*}
L_{\Deltat} \Xcrc_t &= L S_{T-t} \Xcrc_t \\
&= L \left( S_T x_0 + \int_0^t S_{T-s} \left[ F(s,\Xcrc_s)+ Q G(s,\Xcrc_s) \right]\, \df s  + \int_0^t S_{T-s} Q^{\fsqrt} \, \df \Wcrc_s\right).
\end{align*}
Setting $\tilde{W}_t := - \Wcrc_t$, it thus holds
\begin{align}
\label{eq: dYt}
\df Y_t = - L_{\Deltat} \left[F(t,\Xcrc_t) + Q G(t, \Xcrc_t) \right] \df t + L_{\Deltat} Q^{\fsqrt} \df \tilde{W}_t.
\end{align}

Next, by definition, $Q_t$ satisfies $\df Q_t = S_t Q S^*_t \df t$.
Hence, for $R_t = L Q_t L^*$, it follows that
\begin{align*}
    \df R_t = L \df Q_t L^* = L S_t Q S^*_t L^* \df t = L_t Q L_t^* \df t
\end{align*}
which gives that
\begin{align}
\begin{split}
\label{eq: dRt^-1}
\dfrac{\df}{\df t} R^{-1}_{\Deltat} &= - R^{-1}_{\Deltat} \left(\dfrac{\df}{\df t} R_{\Deltat}\right) R^{-1}_{\Deltat} = R^{-1}_{\Deltat} L_{\Deltat} Q L_{\Deltat}^* R^{-1}_{\Deltat}.
\end{split}
\end{align}
Plugging in \eqref{eq: dYt} and \eqref{eq: dRt^-1}, we get 
\begin{align}
\label{eq: d(R^-1Y)}
\begin{split}
\df \left( R_{\Deltat}^{-1} Y_t \right) &= \left( \dfrac{\df}{\df t} R^{-1}_{\Deltat}\right) Y_t \df t + R_{\Deltat}^{-1} \df Y_t \\
&= R^{-1}_{\Deltat} L_{\Deltat} Q L_{\Deltat}^* R^{-1}_{\Deltat} Y_t \df t \\
&\quad -  R_{\Deltat}^{-1} L_{\Deltat} \left[F(t,\Xcrc_t) + Q G(t, \Xcrc_t) \right] \df t + R_{\Deltat}^{-1} L_{\Deltat} Q^{\fsqrt} \df \tilde{W}_t \\
&= - R^{-1}_{\Deltat} L_{\Deltat} F(t,\Xcrc_t) \df t + R_{\Deltat}^{-1} L_{\Deltat} Q^{\fsqrt} \df \tilde{W}_t,
\end{split}
\end{align}
where in the last step we use that $Q G(t, \Xcrc_t) = Q L^*_{\Deltat} R^{-1}_{\Deltat} Y_t$.

Finally, it follows from Itô's lemma in the second line, plugging in \eqref{eq: dYt} and \eqref{eq: d(R^-1Y)} in the third line and again using $Q G(t, \Xcrc_t) = Q L^*_{\Deltat} R^{-1}_{\Deltat} Y_t$ in the last equality that 
\begin{align}
\begin{split}
\label{eq: dVtY_2}
2 \df V(t,Y_t) &= \df \langle R_{\Deltat}^{-1} Y_t, Y_t \rangle \\
&= \langle R_{\Deltat}^{-1} Y_t, \df Y_t \rangle  +  \langle \df \left( R_{\Deltat}^{-1} Y_t \right), Y_t \rangle  + \dfrac{1}{2} \tr \left[ R_{\Deltat}^{-1} L_{\Deltat} Q L^*_{\Deltat} \right] \df t \\
&= - \langle R_{\Deltat}^{-1} Y_t, L_{\Deltat} \left[F(t,\Xcrc_t) + Q G(t, \Xcrc_t) \right] \rangle \df t + \langle R_{\Deltat}^{-1} Y_t, L_{\Deltat} Q^{\fsqrt} \df \tilde{W}_t \rangle \\
&\quad -  \langle R^{-1}_{\Deltat} L_{\Deltat} F(t,\Xcrc_t), Y_t \rangle \df t  + \langle R_{\Deltat}^{-1} L_{\Deltat} Q^{\fsqrt} \df \tilde{W}_t,Y_t \rangle \\
&\quad + \dfrac{1}{2} \tr \left[ R_{\Deltat}^{-1} L_{\Deltat} Q L^*_{\Deltat} \right] \df t\\
&= - 2  \langle   L^*_{\Deltat} R_{\Deltat}^{-1} Y_t,   F(t,\Xcrc_t) \rangle \df t \\
&\quad + 2 \langle Q^{\fsqrt} L^*_{\Deltat} R_{\Deltat}^{-1} Y_t, \df \tilde{W}_t \rangle - \langle Q^{\fsqrt} L_{\Deltat}^* R_{\Deltat}^{-1} Y_t, Q^{\fsqrt} L_{\Deltat}^* R_{\Deltat}^{-1} Y_t \rangle \df t \\
&\quad + \dfrac{1}{2} \tr \left[ R_{\Deltat}^{-1} L_{\Deltat} Q L^*_{\Deltat} \right] \df t.
\end{split}
\end{align}
\end{proof}

We proceed with the more technical steps 3 and 4.
\begin{proof}{(of Theorem \ref{thm: Xcrc_limit_behavior})}
We aim to bound all terms appearing on the right hand side of \eqref{eq: dVtY}, starting with the stochastic integral term. 
For this, fix some $t_0 \in [0,T)$ and define 
\begin{align*}
    M_t = \int_{t_0}^t  \langle Q^{\fsqrt} L^*_{\Deltas} R_{\Deltas}^{-1} Y_s, \df \tilde{W}_s \rangle , \quad t \in [t_0, T).
\end{align*}
Note that $M_t$ is a square-integrable martingale with quadratic variation 
\begin{align*}
    \left[ M \right]_t = \int_{t_0}^t \langle Q^{\fsqrt} L_{\Deltas}^* R_{\Deltas}^{-1} Y_s, Q^{\fsqrt} L_{\Deltas}^* R_{\Deltas}^{-1} Y_s \rangle \df s
\end{align*}
and in particular $M_t - \frac{1}{2} \left[ M \right]_t$ is a martingale that we can bound by an exponential martingale inequality as follows;

For any sequence $(\gamma_k)_k$ of positive numbers and $t_k = T-1/k, ~k \in \N$, define the events
\begin{align*}
    E_k = \left\{ \sup_{t_0 \leq t \leq t_{k+1}} \left(M_t - \frac{1}{2} \left[ M \right]_t\right) > \gamma_k \right\}.
\end{align*}
An application of \cite{Mao2007}, Theorem 1.7.4, then shows that $\P(E_k) \leq \exp(-\gamma_k)$.

Now, let $(\gamma_k)_k$ be an arbitrary but fixed sequence that satisfies
\begin{align}
\label{eq: ass_gammak}
\sum_{k=1}^{\infty} \exp(-\gamma_k) < \infty.
\end{align}
It then follows that $\sum_{k=1}^{\infty} \P(E_k) \leq \sum_{k=1}^{\infty} \exp(-\gamma_k) < \infty$, and thus, by the Borel-Cantelli lemma, that $\P(\limsup_k E_k) = 0$.
Hence, for $\P$-a.e. $\om$ there exists some $k_0(\om)$ such that for all $k \geq k_0(\om)$
\begin{align}
    \sup_{t_0 \leq t \leq t_{k+1}} \left(M_t - \frac{1}{2} \left[ M \right]_t\right) \leq \gamma_k.
\end{align}

The remaining terms in \eqref{eq: dVtY} are easily bounded as follows.
From Assumption \ref{ass: basic_assumptions_SPDE}, it follows that there exists some $c > 0$ such that $\sup_{x \in H, t \in [0,T]} |L_{\Deltat} F(t,x)| \leq c$.
Hence, with $\|R_{\Deltat}^{-1}\| \leq \bar{c} \Deltat^{-1}$ as given in Assumption \ref{ass: Xcrc_limit_behavior}, it holds on $[0,T)$ that
\begin{align*}
    \left| \langle  L^*_{\Deltat} R_{\Deltat}^{-1} Y_t, F(t,\Xcrc_t) \rangle \right| \leq c ~ \bar{c} ~ \Deltat^{-1} |Y_t|.
\end{align*}

Moreover, by Lemma \ref{lem: trace_bound}, we have $\tr[L_t Q L_t^*] \leq \bar{c}'$ for any $t \in [0,T]$ and thus
\begin{align*}
    \left| \tr \left[ R_{\Deltat}^{-1} L_{\Deltat} Q L^*_{\Deltat} \right] \right| \leq \bar{c} \bar{c}' \Deltat^{-1}.
\end{align*}

In total, we get for any $t \in [t_0,t_{k+1}]$ that 
\begin{align}
\begin{split}
\label{eq: dVtY_3}
V(t,Y_t) &= V(t_0,Y_{t_0}) + \int_{t_0}^t \df V(s,Y_s) \\
&\leq V(t_0,Y_{t_0}) + \gamma_k + c ~ \bar{c} \int_{t_0}^{t} \Deltas^{-1} \left|Y_s \right| \, \df s \\
&\quad + \frac{\bar{c} \bar{c}'}{4} \int_{t_0}^{t} \Deltas^{-1} \, \df s.
\end{split}
\end{align}

Now define $\xi_t = \Deltat^{-1} |Y_t|^2 $. With $\|R_{\Deltat}^{-1}\| \geq \underbar{c} ~ \Deltat^{-1}$ as given in Assumption \ref{ass: Xcrc_limit_behavior} and the symmetry of $R^{-1}_{\Deltat}$, it follows that
\begin{align*}
    V(t,Y_t) = \frac{1}{2} \langle R_{\Deltat}^{-1} Y_t, Y_t \rangle \geq \fsqrt \underbar{c} ~ \Deltat^{-1} |Y_t|^2 = \fsqrt \underbar{c} ~ \xi_t.
\end{align*}
Hence, the inequality in \eqref{eq: dVtY_3} can be rewritten as 
\begin{align*}
    \xi_t &\leq M_0 + \frac{2}{\underbar{c}} \gamma_k + M_1 \int_{t_0}^t \sqrt{\xi_s} \Deltas^{-\fsqrt} \, \df s + M_2 \int_{t_0}^t \Deltas^{-1} \, \df s \\
    &=  M_0 + \frac{2}{\underbar{c}} \gamma_k + M_1 \int_{t_0}^t \sqrt{\xi_s} \Deltas^{-\fsqrt} \, \df s + M_2
    \ln \left( \frac{T-t_0}{T-t}\right),
\end{align*}
where $M_0 = \frac{2 V(t_0,Y_{t_0})}{\underbar{c}}$, $M_1 = \frac{2 c \bar{c}}{\underbar{c}}$ and $M_2 = \frac{\bar{c} \bar{c}'}{2 \underbar{c}}$.

Thus, applying the Gronwall-type inequality in Lemma \ref{lem: gronwall_inequality}, we get 
\begin{align}
\begin{split}
\label{eq: a1p2983}
    \sqrt{\xi_t} &\leq \sqrt{M_0 + \frac{2}{\underbar{c}} \gamma_k + M_2 \int_{t_0}^t \Deltas^{-1} \, \df s} + \frac{M_1}{2} \int_{t_0}^t \Deltas^{- \fsqrt} \, \df s \\
    &\quad = \sqrt{M_0 + \frac{2}{\underbar{c}} \gamma_k + M_2 \ln \left( (T-t_0)/(T-t)\right)} + M_1 \left[ \sqrt{\Delta_{t_0}} - \sqrt{\Deltat}\right].
\end{split}
\end{align}

Now, for any fixed $\varepsilon >0$, consider the sequence $\gamma_k = \ln(k^{1+\varepsilon})$. Then $(\gamma_k)_k$ satisfies \eqref{eq: ass_gammak}.
Moreover, for any $t \in [t_k,t_{k+1}]$, it holds 
\begin{align*}
    \frac{\gamma_k}{\ln\left(1/(T-t)\right)} \leq \frac{\gamma_k}{\ln\left(1/(T-t_k)\right)} = \frac{\ln(k^{1+\varepsilon})}{\ln\left(k\right)} = (1 + \varepsilon).
\end{align*}

Hence, dividing both sides in \eqref{eq: a1p2983} by $\sqrt{\ln\left(1/(T-t)\right)}$, considering $t \in [t_k, t_{k+1}]$ and taking the limit in $k$ gives that
\begin{align*}
    \limsup_{t \uparrow T} \frac{|y - L_{\Deltat} \Xcrc_t|}{\sqrt{(T-t) \ln\left( 1/(T-t)\right)}} \leq \sqrt{\frac{2}{\underbar{c}} (1+\varepsilon) + \frac{\bar{c} \bar{c}'}{2 \underbar{c}}}.
\end{align*}
The claim follows from noting that $\varepsilon >0$ was chosen arbitrarily. 
\end{proof}

The following lemma was used in the proof of Theorem \ref{thm: Xcrc_limit_behavior}.
\begin{lemma}
\label{lem: trace_bound}
Under Assumption \ref{ass: basic_assumptions_SPDE}, there exists some constant $\bar{c}'$ such that
\begin{align}
\label{eq: trace_bound}
    \tr[L_t Q L_t^*] \leq \bar{c}', \quad t \in [0,T].
\end{align}
\end{lemma}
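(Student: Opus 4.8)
The plan is to exploit the fact that $L$ has finite rank $k$, so that $L_t Q L_t^* = L S_t Q S_t^* L^*$ is a positive semidefinite operator on the \emph{finite}-dimensional space $\R^k$, whose trace is therefore just a finite sum of nonnegative numbers. First I would fix the standard basis $(\epsilon_i)_{i=1}^k$ of $\R^k$ and use Riesz representation to write $L^* \epsilon_i = \ell_i$ for some $\ell_i \in H$, noting $|\ell_i| = |L^* \epsilon_i| \le \|L\|$. Since $L_t^* = S_t^* L^*$, expanding the trace over this basis gives
\begin{align*}
    \tr[L_t Q L_t^*] = \sum_{i=1}^k \langle L_t Q L_t^* \epsilon_i, \epsilon_i \rangle_{\R^k} = \sum_{i=1}^k \langle Q S_t^* \ell_i, S_t^* \ell_i \rangle_H.
\end{align*}

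Then I would bound each summand using only boundedness of $Q$ and of the semigroup. Because $Q$ is positive and bounded, $\langle Q S_t^* \ell_i, S_t^* \ell_i \rangle \le \|Q\|\, |S_t^* \ell_i|^2 \le \|Q\|\, \|S_t\|^2 |\ell_i|^2$, where I used $\|S_t^*\| = \|S_t\|$. By Assumption \ref{ass: basic_assumptions_SPDE}(i), $\|S_t\| \le C_S \exp(\om_S t) \le C_S \exp(|\om_S| T)$ for all $t \in [0,T]$, which is uniform in $t$. Combining the two displays with $|\ell_i| \le \|L\|$ yields
\begin{align*}
    \tr[L_t Q L_t^*] \le k\, \|L\|^2\, \|Q\|\, C_S^2 \exp(2 |\om_S| T) =: \bar{c}',
\end{align*}
which is the claim.

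There is essentially no obstacle here: the bound is a direct consequence of finite rank together with the elementary estimates $\|S_t\| \le C_S e^{\om_S t}$ and the boundedness of $Q$. The one point worth flagging is that the statement does \emph{not} require $Q$ to be trace class — which it need not be under Assumption \ref{ass: basic_assumptions_SPDE}(iii) — precisely because the finite rank of $L$ collapses the trace into a finite sum, so only the operator norm $\|Q\|$ enters. One could equivalently invoke the cyclic identity $\tr[L_t Q L_t^*] = \tr[Q L_t^* L_t]$, but the direct expansion over the basis of $\R^k$ is the cleanest route.
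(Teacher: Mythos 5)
Your proof is correct. Both you and the paper reduce the trace to the same finite sum over the standard basis of $\R^k$, namely $\tr[L_t Q L_t^*] = \sum_{i=1}^k \langle Q\, S_t^* L^* \epsilon_i,\, S_t^* L^* \epsilon_i\rangle$, but you finish differently: you bound each summand explicitly via $\langle Qx,x\rangle \le \|Q\|\,|x|^2$, $\|S_t^*\| = \|S_t\| \le C_S e^{\om_S t}$ and $|L^*\epsilon_i| \le \|L\|$, producing a concrete constant $\bar c' = k\|L\|^2\|Q\|C_S^2 e^{2|\om_S|T}$. The paper instead observes that $(S_t^*)_{t\ge 0}$ is strongly continuous (using reflexivity of $H$), so $t \mapsto \tr[L_t Q L_t^*]$ is a continuous real-valued function on the compact interval $[0,T]$ and is therefore bounded, without exhibiting the constant. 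Your route is slightly more elementary in that it needs only the operator-norm bound on the semigroup from Assumption 2.1(i) rather than strong continuity of the adjoint semigroup, and it yields a quantitative constant; the paper's route is a one-line soft argument. Your closing remark is also on point: the finite rank of $L$ is what makes the trace a finite sum, so only $\|Q\|$ (not trace-class properties of $Q$) is needed, which both proofs implicitly exploit.
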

\begin{proof}
Since $H$ is reflexive, it is well known that the family of operators $(S^*_t)_{ t \geq 0}$ is a strongly continuous semigroup on $H$. In particular, $(L^*_t)_{t \geq 0}$ is strongly continuous.
Hence, denoting by $(b_j)_j$ the standard basis in $\R^k$, the mapping 
\begin{align*}
    t \mapsto \tr[ L_t Q L_t^*] =  \sum_{j=1}^k \langle L_t Q L_t^* b_j, b_j \rangle = \sum_{j=1}^k \| Q^{\fsqrt} L_t^* b_j \|^2
\end{align*}
is continuous and thus \eqref{eq: trace_bound} follows.
\end{proof}

\section{Proof of Theorem \ref{thm: absolute_continuity}}
\label{sec: 7_proof_abs_cont}

\begin{proof}{(of Theorem \ref{thm: absolute_continuity})}

Let $(\Phi_t(\Xcrc))_{t < T}$ be the density process defined by
\begin{align*}
    \Phi_t(\Xcrc) &= \dfrac{\df \Pstr_t}{ \df \Pcrc_t}(\Xcrc) = \frac{h(t,\Xcrc_t)}{g(t,\Xcrc_t)} \frac{g(0,x_0)}{h(0,x_0)} \Psi_t(\Xcrc).
\end{align*}
Following Theorem \ref{thm: Xcrc_limit_behavior}, the random variable $\Psi_T(\Xcrc)$ is $\Pcrc$-a.s. finite and hence $\Psi_t(\Xcrc) \to \Psi_T(\Xcrc)$ $\Pcrc$-a.s.
Moreover, from Assumption \ref{ass: absolute_continuity}, it follows that $\lim_{t \to T} h(t,\Xcrc_t)/g(t,\Xcrc_t) = 1 ~\Pcrc\text{-a.s.}$ and thus, in total, 
\begin{align}
\label{eq: ap918212837}
    \lim_{t \uparrow T} \Phi_t(\Xcrc)  = \Phi_T(\Xcrc) \quad \Pcrc\text{-a.s.}
\end{align}

We will show that this convergence holds in $L^1(\Pcrc)$. Since $(\Phi_t(\Xcrc))_{t < T}$ is a martingale under $\Pcrc$, the convergence in $L^1(\Pcrc)$ then implies that 
\begin{align*}
    \Phi_t(\Xcrc) = \Ecrc[\Phi_T(\Xcrc) \mid \calF_t] \quad \Pcrc\text{-a.s.}
\end{align*}
and hence that $\Pstr$ is absolutely continuous with respect to $\Pcrc$ on $\calF_T$ with $\df \Pstr = \Phi_T(\Xcrc) \df \Pcrc$.

For this, let $r(t) := \sqrt{ (T-t) \ln(1/(T-t))}$ and define for any $m \geq 1$ the stopping time
\begin{align}
\label{eq: sigmam}
\sigma_m = T \wedge \inf\{ t \in [0,T] : \|y - L_{T-t} X_t\| \geq m ~ r(t)\}.
\end{align}

Following Equation \eqref{eq: Pstrt_Pcrc_t}, it holds for any $\calF_t$-measurable random variable $f_t$ that
\begin{align*}
    \frac{g(0,x_0)}{h(0,x_0)} \Ecrc \left[  \Psi_t(\Xcrc) ~f_t \right] = \Estr\left[\dfrac{g(t,\Xstr_t)}{h(t,\Xstr_t)}~ f_t \right].
\end{align*}
Hence, setting $f_t = \mathbbm{1}_{\{t \leq \sigma_m\}}$, we get
\begin{align}
\label{eq: a102983}
\frac{g(0,x_0)}{h(0,x_0)} \Ecrc \left[   \Psi_t(\Xcrc) \mathbbm{1}_{\{t \leq \sigma_m\}}  \right] = \Estr\left[ \dfrac{g(t,\Xstr_t)}{h(t,\Xstr_t)}\mathbbm{1}_{\{t \leq \sigma_m\}} \right].
\end{align}
We proceed by taking $\lim_{m \to \infty} \lim_{t \uparrow T}$ on both sides of \eqref{eq: a102983}. Starting with the left-hand side, it follows from the dominated convergence theorem that
\begin{align*}
    \lim_{t \uparrow T} \Ecrc \left[  \Psi_t(\Xcrc) \mathbbm{1}_{\{t \leq \sigma_m\}}  \right] &= \Ecrc \left[   \Psi_T(\Xcrc) \mathbbm{1}_{\{T \leq \sigma_m\}} \right]
\end{align*}
Here, we use that, following Lemma \ref{lem: likelihoodbound}, $\sup_t \Psi_t(\Xcrc) \mathbbm{1}_{\{t \leq \sigma_m\}} $ is bounded for any fixed $m \in \N$.
Furthermore, by the definition of $\sigma_m$ and Theorem \ref{thm: Xcrc_limit_behavior}, it holds that $\lim_m \mathbbm{1}_{\{T \leq \sigma_m\}} = \lim_m \mathbbm{1}_{\{T = \sigma_m\}} \uparrow 1$ $\Pcrc$-a.e.
Hence, by the previous display and monotone convergence, it holds that
\begin{align}
\label{eq: ap198232134}
    \lim_{m \to \infty} \lim_{t \uparrow T} \Ecrc \left[  \Psi_t(\Xcrc) \mathbbm{1}_{\{t \leq \sigma_m\}}  \right] &= \Ecrc \left[   \Psi_T(\Xcrc) \right].
\end{align}

It remains to take the limit on the right-hand side of \eqref{eq: a102983}. Applying Proposition \ref{prop: existence_diffusion_bridge} in the first line, we have
\begin{align*}
    \Estr\left[ \dfrac{g(t,\Xstr_t)}{h(t,\Xstr_t)}\mathbbm{1}_{\{t \leq \sigma_m\}} \right] &= \E\left[\dfrac{g(t,X_t)}{h(0,x_0)}\mathbbm{1}_{\{t \leq \sigma_m\}} \right] \\
    &= \E\left[\dfrac{g(t,X_t)}{h(0,x_0)}\right] - \E\left[\dfrac{g(t,X_t)}{h(0,x_0)} \mathbbm{1}_{\{t > \sigma_m\}} \right].
\end{align*}
From Lemma \ref{lem: gsigmam_vanishes}, it follows that $\lim_{m \to \infty} \lim_{t \uparrow T} \E\left[g(t,X_t)/h(0,x_0) \mathbbm{1}_{\{t > \sigma_m\}} \right] = 0$.
Moreover, following Assumption \ref{ass: absolute_continuity}, it holds that
\begin{align*}
    \lambda_2^{-1}(T-t) = \lambda_2^{-1}(T-t) ~ \E\left[\dfrac{h(t,X_t)}{h(0,x_0)}\right] \leq \E\left[\dfrac{g(t,X_t)}{h(0,x_0)}\right] \leq \lambda^{-1}_1(T-t) ~ \E\left[\dfrac{h(t,X_t)}{h(0,x_0)}\right] = \lambda^{-1}_1(T-t),
\end{align*}
which shows that $\lim_{t \uparrow T} \E\left[\dfrac{g(t,X_t)}{h(0,x_0)}\right] = 1$. We conclude that
\begin{align*}
    \lim_{m \to \infty} \lim_{t \uparrow T}\Estr\left[ \dfrac{g(t,\Xstr_t)}{h(t,\Xstr_t)}\mathbbm{1}_{\{t \leq \sigma_m\}} \right] = 1.
\end{align*}
Jointly with \eqref{eq: a102983} and \eqref{eq: ap198232134}, this shows that $\Ecrc[\Phi_T(\Xcrc)] = 1$.

To conclude the proof, note that $\E^{\circ}[\Phi_t(\Xcrc)] = 1$ for all $t < T$. Hence, by Scheffé's lemma and \eqref{eq: ap918212837}, this implies that $\lim_{t \uparrow} \Phi_t(\Xcrc) = \Phi_T(\Xcrc)$ in $L^1(\Pcrc)$.
\end{proof}

\subsection{Supplementary lemmas in the proof of Theorem \ref{thm: absolute_continuity}}
Throughout this section we work under the Assumptions of Theorem \ref{thm: absolute_continuity}.

\begin{lemma}
\label{lem: likelihoodbound}
Let $\sigma_m$ be defined as in \eqref{eq: sigmam}. Then there exists a constant $K > 0$ such that
\begin{align*}
    \Psi_t(\Xcrc) \mathbbm{1}_{\{t \leq \sigma_m\}} \leq \exp(K m) \quad \Pcrc\text{-a.s.}
\end{align*}
\end{lemma}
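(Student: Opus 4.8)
The plan is to bound the integrand of $\Psi_t(\Xcrc)$ pathwise on the event $\{t \le \sigma_m\}$, where the stopping time forces $y - L_{T-s}\Xcrc_s$ to stay inside the tube of radius $m\,r(s)$. Writing $Y_s = y - L_{T-s}\Xcrc_s$ and recalling from \eqref{eq: G} that $G(s,\Xcrc_s) = L^*_{T-s}R^{-1}_{T-s}Y_s$, the exponent in \eqref{eq: Psi(t)} becomes
\begin{align*}
\int_0^t \langle F(s,\Xcrc_s), G(s,\Xcrc_s)\rangle\, \df s = \int_0^t \langle L_{T-s}F(s,\Xcrc_s), R^{-1}_{T-s}Y_s\rangle\, \df s.
\end{align*}
First I would estimate the integrand by Cauchy--Schwarz as $|L_{T-s}F(s,\Xcrc_s)|\,\|R^{-1}_{T-s}\|\,|Y_s|$ and then insert the three available bounds.

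For these three factors: the boundedness of $F$ in Assumption \ref{ass: basic_assumptions_SPDE}(iv), together with the boundedness of $L$ and of $(S_t)_{t\le T}$, gives a constant $c>0$ with $\sup_{s,x}|L_{T-s}F(s,x)| \le c$ (exactly as in the proof of Theorem \ref{thm: Xcrc_limit_behavior}); Assumption \ref{ass: Xcrc_limit_behavior} gives $\|R^{-1}_{T-s}\| \le \bar c\,(T-s)^{-1}$; and on $\{t \le \sigma_m\}$ the definition \eqref{eq: sigmam} of $\sigma_m$ yields $|Y_s| \le m\,r(s) = m\sqrt{(T-s)\ln(1/(T-s))}$ for every $s \le t$. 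Multiplying these, the singular factor $(T-s)^{-1}$ is tempered by the $\sqrt{T-s}$ coming from $r(s)$, so that
\begin{align*}
\bigl|\langle F(s,\Xcrc_s), G(s,\Xcrc_s)\rangle\bigr| \le c\,\bar c\, m\,(T-s)^{-1/2}\sqrt{\ln(1/(T-s))}, \qquad s \le t \le \sigma_m.
\end{align*}

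The crux is then to show that the time integral of this bound is finite, uniformly in $t<T$. I would verify
\begin{align*}
C_0 := \int_0^T (T-s)^{-1/2}\sqrt{\ln(1/(T-s))}\,\df s < \infty
\end{align*}
by the substitution $u = T-s$ followed by $u = e^{-v}$, which turns the integral near $u=0$ into $\int^\infty \sqrt{v}\,e^{-v/2}\,\df v$, whose exponential decay makes it converge. (The integrand is real and nonnegative wherever $T-s\le 1$; for the purposes of the lemma only the behaviour near $s=T$ matters, the bounded part away from $T$ contributing at most a finite additive constant.) Combining, on $\{t\le\sigma_m\}$ the exponent is at most $c\,\bar c\,C_0\,m$, whence $\Psi_t(\Xcrc)\mathbbm{1}_{\{t\le\sigma_m\}} \le \exp(Km)$ with $K = c\,\bar c\,C_0$, which is the claim. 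The main obstacle is precisely this integrability: it is the reason the rate $r(t)$ in the stopping time is calibrated as $\sqrt{(T-t)\ln(1/(T-t))}$ rather than, say, $\sqrt{T-t}$, so that the $(T-s)^{-1}$ blow-up of $\|R^{-1}_{T-s}\|$ is exactly absorbed into an integrable singularity.
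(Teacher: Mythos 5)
Your proof is correct and follows essentially the same route as the paper's: on $\{t\le\sigma_m\}$ you combine the tube bound $|Y_s|\le m\,r(s)$, the bound $\|R^{-1}_{T-s}\|\le \bar c\,(T-s)^{-1}$ from Assumption \ref{ass: Xcrc_limit_behavior}, and the boundedness of $F$ and of $L_{T-s}$ to dominate the exponent by $m$ times the integrable singularity $(T-s)^{-1/2}\sqrt{\ln(1/(T-s))}$, exactly as in the paper (which writes the same estimate as a bound on $|G(s,\Xcrc_s)|$ before pairing with $F$). Your explicit verification of the integrability via the substitution $u=e^{-v}$ is a welcome addition the paper only asserts.
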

\begin{proof}
Per definition of $\sigma_m$, it holds on the event $\{t \leq \sigma_m\}$ that
\begin{align*}
    | y - L_{T-t} \Xcrc_t| \leq m ~ r(t), 
\end{align*}
with $r(t) = \sqrt{ (T-t) \ln(1/(T-t))}$.
Hence, plugging in $G(s,x)$ as given in \eqref{eq: G}, it follows on $\{t \leq \sigma_m\}$ for any $s \leq t$ that
\begin{align}
\label{eq: a0p2343p08}
    |G(s,\Xcrc_s)| &= |L^*_{T-s} R^{-1}_{T-s} (y - L_{T-s} \Xcrc_s)| \leq \tilde{c} ~ \bar{c} ~ m (T-s)^{-1/2} \sqrt{ \ln(1/(T-s))},
\end{align}
where we use the bound $\|R_{T-s}^{-1}\| \leq \bar{c} (T-s)^{-1}$ of Assumption \ref{ass: Xcrc_limit_behavior} and the fact that $\|L^*_{T-s}\| \leq \tilde{c}$ for some $\tilde{c} > 0$.
In total, using the boundedness of $|F(s,x)| \leq C_F$ and \eqref{eq: a0p2343p08}, we get
\begin{align*}
    \Psi_t(\Xcrc) \mathbbm{1}_{\{t \leq \sigma_m\}} &= \exp \left( \int_0^t  \Bigl \langle F(s,\Xcrc_s),  G(s,\Xcrc_s)\Bigr \rangle \, \df s \right) \mathbbm{1}_{\{t \leq \sigma_m\}} \\
    &\leq \exp \left( \int_0^t  \left| \Bigl \langle F(s,\Xcrc_s),  G(s,\Xcrc_s)\Bigr \rangle \right| \, \df s \right) \mathbbm{1}_{\{t \leq \sigma_m\}} \\
    &\leq \exp \left( \int_0^t C_F \tilde{c} ~ \bar{c} ~ m (T-s)^{-1/2} \sqrt{ \ln(1/(T-s))}  \, \df s \right) \mathbbm{1}_{\{t \leq \sigma_m\}} \quad \Pcrc\text{-a.s.}
\end{align*}
Noting that the term in the exponential is integrable over $[0,T]$, the claim follows with $K = C_F \tilde{c} ~ \bar{c} \int_0^T (T-s)^{-1/2} \sqrt{ \ln(1/(T-s))} \, \df s$.
\end{proof}

\begin{lemma}
\label{lem: gsigmam_vanishes}
Let $\sigma_m$ be defined as in \eqref{eq: sigmam}. Then
\begin{align*}
    \lim_{m \to \infty} \lim_{t \uparrow T} \E[ g(t,X_t) \mathbbm{1}_{\{t > \sigma_m\}}] = 0.
\end{align*}
\end{lemma}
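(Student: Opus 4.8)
The plan is to avoid the intractable bridge dynamics entirely and instead bound the escaping mass by transferring it, via the $h$-martingale, to the crossing time $\sigma_m$, where the Gaussian density $g$ is forced to be small. All expectations below are under $\P$. First I would use the lower bound $g(t,x)\le\lambda_1(T-t)^{-1}h(t,x)$ from Assumption \ref{ass: absolute_continuity} to write
\begin{align*}
\E[g(t,X_t)\mathbbm{1}_{\{t>\sigma_m\}}]\le\lambda_1(T-t)^{-1}\,\E[h(t,X_t)\mathbbm{1}_{\{\sigma_m<t\}}].
\end{align*}
By Proposition \ref{prop: existence_diffusion_bridge} the process $h(\cdot,X_\cdot)=h(0,x_0)E^h_\cdot$ is a genuine $\P$-martingale on $[0,t]$ for each $t<T$ (recall $Kh=0$), and $\sigma_m\wedge t$ is a bounded stopping time with $\{\sigma_m<t\}\in\calF_{\sigma_m\wedge t}$. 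Optional stopping then replaces the terminal value by the value at the crossing time:
\begin{align*}
\E[h(t,X_t)\mathbbm{1}_{\{\sigma_m<t\}}]=\E[h(\sigma_m,X_{\sigma_m})\mathbbm{1}_{\{\sigma_m<t\}}].
\end{align*}

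Next I would return to $g$ through the upper bound $h(\sigma_m,X_{\sigma_m})\le\lambda_2(T-\sigma_m)g(\sigma_m,X_{\sigma_m})$ and estimate $g$ at $\sigma_m$ using its explicit Gaussian form. On $\{\sigma_m<t\}\subseteq\{\sigma_m<T\}$ the crossing is genuine, so continuity of the paths and of $s\mapsto r(s)$ gives $|y-L_{T-\sigma_m}X_{\sigma_m}|\ge m\,r(\sigma_m)$ with $r(\sigma_m)^2=(T-\sigma_m)\ln(1/(T-\sigma_m))$. Combining the determinant bound $\det R_{T-\sigma_m}\ge((T-\sigma_m)/\bar{c})^{k}$, which follows from $\|R_{T-\sigma_m}^{-1}\|\le\bar{c}(T-\sigma_m)^{-1}$, with the quadratic-form bound $\langle R_{T-\sigma_m}^{-1}v,v\rangle\ge\underbar{c}(T-\sigma_m)^{-1}|v|^2$ used in the proof of Theorem \ref{thm: Xcrc_limit_behavior}, I obtain
\begin{align*}
g(\sigma_m,X_{\sigma_m})\le\frac{\bar{c}^{\,k/2}}{(2\pi)^{k/2}}(T-\sigma_m)^{-k/2}\exp\!\Big(-\tfrac12\underbar{c}\,m^2\ln\tfrac{1}{T-\sigma_m}\Big)=C_k\,(T-\sigma_m)^{\alpha_m},
\end{align*}
with $\alpha_m=\tfrac12\underbar{c}\,m^2-\tfrac{k}{2}$ and $C_k=\bar{c}^{\,k/2}(2\pi)^{-k/2}$. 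This is the heart of the proof: the exponential penalty for being a distance $m\,r(\sigma_m)$ from the conditioning point beats the determinant blow-up precisely because of the linear rate in Assumption \ref{ass: Xcrc_limit_behavior}, turning $g(\sigma_m,X_{\sigma_m})$ into a positive power of $T-\sigma_m$ once $\alpha_m>0$.

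Finally, since $\lambda_2$ is continuous on $[0,T]$ it is bounded by some $\Lambda$, and (recalling that $T<1$, as is implicit in the definition of $r$) one has $(T-\sigma_m)^{\alpha_m}\le T^{\alpha_m}$ for $\alpha_m>0$. Chaining the three displays gives
\begin{align*}
\E[g(t,X_t)\mathbbm{1}_{\{t>\sigma_m\}}]\le\lambda_1(T-t)^{-1}\,\Lambda\,C_k\,T^{\alpha_m}.
\end{align*}
Letting $t\uparrow T$ uses $\lambda_1(T-t)\to1$, and then $m\to\infty$ gives $T^{\alpha_m}\to0$ since $\alpha_m\to\infty$, which is the claim. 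I expect the only real obstacle to be the estimate at the crossing time: one must keep the Gaussian exponential rather than bounding $g(\sigma_m,X_{\sigma_m})$ by its (diverging) peak height $((2\pi)^k\det R_{T-\sigma_m})^{-1/2}$, and this forces the use of the two-sided spectral control of $R_{T-t}$ rather than only the bound on $\|R_{T-t}^{-1}\|$ appearing in Assumption \ref{ass: Xcrc_limit_behavior}.
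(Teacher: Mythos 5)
Your reduction---$g(t,X_t)\lesssim h(t,X_t)$ by Assumption \ref{ass: absolute_continuity}, optional stopping of the $h$-martingale at $\sigma_m\wedge t$ (valid since $Kh=0$ and $\{\sigma_m<t\}\in\calF_{\sigma_m\wedge t}$), then $h(\sigma_m,X_{\sigma_m})\lesssim g(\sigma_m,X_{\sigma_m})$---is exactly the chain of inequalities in the paper's proof. Where you differ is the final step: the paper disposes of $\lim_m\lim_{t\uparrow T}\E[g(\sigma_m,X_{\sigma_m})\mathbbm{1}_{\{t>\sigma_m\}}]=0$ by citing Lemma 6.4 of \cite{vdMeulenBierkensSchauer2020Simulation}, whereas you prove it directly from the Gaussian form of $g$, playing the exponential penalty $(T-\sigma_m)^{cm^2}$ incurred at the crossing against the determinant blow-up $(T-\sigma_m)^{-k/2}$. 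That makes the argument self-contained, which is a genuine gain over the paper's version.

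One ingredient needs repair, and you flag it yourself at the end: the quadratic-form bound $\langle R_{T-\sigma_m}^{-1}v,v\rangle\ge \underbar{c}\,(T-\sigma_m)^{-1}|v|^2$ amounts to $\lambda_{\max}(R_{T-\sigma_m})\le (T-\sigma_m)/\underbar{c}$, while Assumption \ref{ass: Xcrc_limit_behavior} only controls $\|R_t^{-1}\|=1/\lambda_{\min}(R_t)$; its constant $\underbar{c}$ bounds $\lambda_{\min}(R_t)$ from above, not $\lambda_{\max}(R_t)$. The missing bound is, however, already available in the paper: since $R_t=\int_0^t L_uQL_u^*\,\df u$, Lemma \ref{lem: trace_bound} gives $\lambda_{\max}(R_t)\le\tr[R_t]\le\bar{c}'t$, so your estimate holds with $1/\bar{c}'$ in place of $\underbar{c}$ and $\alpha_m=m^2/(2\bar{c}')-k/2\to\infty$ still. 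The determinant bound $\det R_{T-\sigma_m}\ge((T-\sigma_m)/\bar{c})^k$ does follow from Assumption \ref{ass: Xcrc_limit_behavior} as you say. The remaining caveat---that $(T-\sigma_m)^{\alpha_m}\le T^{\alpha_m}\to0$ requires $T<1$---is an artefact of the definition of $r$ that the paper shares, and is harmless after rescaling time or replacing $\ln(1/(T-t))$ by $1+\ln^+(1/(T-t))$. With these substitutions your proof is correct.
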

\begin{proof}
Just as in the proof of \cite{vdMeulenBierkensSchauer2020Simulation}, Lemma 6.4, it can be shown that
\begin{align}
\label{eq: ap918273}
\lim_{m \to \infty} \lim_{t \uparrow T} \E \left[g(\sigma_m, X_{\sigma_m}) \mathbbm{1}_{\{t > \sigma_m\}} \right] = 0.
\end{align}

The claim then follows from the fact that 
\begin{align*}
    \E[ g(t,X_t) \mathbbm{1}_{\{t > \sigma_m\}}] &\leq C ~ \E[ h(t,X_t) \mathbbm{1}_{\{t > \sigma_m\}}] \\
    &= C \, \E[ \, \E[ h(t,X_t) \mathbbm{1}_{\{t > \sigma_m\}} \mid \calF_{\sigma_m}]] \\
    &=  C \, \E[ h(\sigma_m, X_{\sigma_m}) \mathbbm{1}_{\{t > \sigma_m\}}] \\
    &\leq C^2 \E[ g(\sigma_m, X_{\sigma_m}) \mathbbm{1}_{\{t > \sigma_m\}}].
\end{align*}
Here, we used that, following Assumption \ref{ass: absolute_continuity}, $g(t,x) \leq C_1 h(t,x)$ and $h(t,x) \leq C_2 g(t,x)$ for all $t < T, x \in H$ with $C_1 := \sup_{t \in [0,T]} \lambda_1^{-1}(t)$ and $C_2 := \sup_{t \in [0,T]} \lambda_2(t)$. 
\end{proof}

\subsection{Proof of Lemma \ref{lem: bounds_gh}}
\label{subsec: boundsghproof}

\begin{proof}
Recall that $h(t,x) = \rho_X(t,x;T,y)$ and $g(t,x) = \rho_Z(t,x;T,y)$ are the transition densities of $LX_T \mid X_t = x$ and $LZ_T \mid Z_t =x$. 
Denote by $X^{t,x}$ and $Z^{t,x}$ the processes $X$ and $Z$ initiated at $X_t = x$ and $Z_t = x$ respectively. 

We start by showing the second inequality in \eqref{eq: bounds_ghlemma}.
By the Girsanov theorem and the abstract Bayes formula we have, for any bounded and measurable functional $f$, that
\begin{align*}
    \E[f(X^{t,x}) \mid L X^{t,x}_T = y] = \frac{\rho_Z(t,x;T,y)}{\rho_X(t,x;T,y)} \E \left[f(Z^{t,x}) \frac{\df \calL(X^{t,x})}{\df \calL(Z^{t,x})}(Z^{t,x}) \mid LZ^{t,x}_T = y \right],
\end{align*}
where 
\begin{align*}
    \frac{\df \calL(X^{t,x})}{\df \calL(Z^{t,x})}(Z) = \exp \left( \int_t^T \langle \tilde{F}(s,Z^{t,x}_s), \df W_s \rangle - \frac{1}{2} \int_t^T \|\tilde{F}(s,Z^{t,x}_s) \|^2 \, \df s \right) \quad \P\text{-a.s.}
\end{align*}
is the Girsanov likelihood between $X^{t,x}$ and $Z^{t,x}$ on $C([t,T];H)$.

Denote by $(M(u))_{u \geq t}$ the continuous local martingale $M(u) = \int_t^u \langle \tilde{F}(s,Z^{t,x}_s), \df W_s \rangle$ with quadratic variation $[M]_u = \int_t^u \|\tilde{F}(s,Z^{t,x}_s) \|^2 \, \df s$.
Setting $f \equiv 1$ we get that
\begin{align}
\begin{split}
    \label{eq: ap29374}    \rho_X(t,x;T,y) &= \rho_Z(t,x;T,y)  \E \left[ \exp \left( M(T) - \frac{1}{2} [M]_T \right) \mid LZ^{t,x}_T = y \right] \\
    &\leq \rho_Z(t,x;T,y)  \E \left[ \exp \left( M(T) \right) \mid LZ^{t,x}_T = y \right]. 
\end{split}
\end{align}

From the Dambis-Dubins-Schwarz theorem it follows that there exists a real-valued Wiener process $\bar{W}$ such that
\begin{align*}
    M(u) = \bar{W}_{[M]_u} \quad \P\text{-a.s.}, u \geq t.
\end{align*}

Furthermore, by the boundedness of $\tilde{F}$, we have that
\begin{align*}
    [M]_T = \int_t^T \|\tilde{F}(s,Z^{t,x}_s) \|^2 \, \df s \leq C_{\tilde{F}}^2 (T-t).  
\end{align*} 
Hence it holds that 
\begin{align}
\begin{split}
\label{eq: a1p923874}
        \E \left[ \exp \left( M(T) \right) \mid LZ^{t,x}_T = y \right] &= \E \left[ \exp\left( \bar{W}_{[M]_T} \right) \mid LZ^{t,x}_T = y \right] \\
        &\leq \E \left[ \exp\left( \sup_{0 \leq s \leq C_{\tilde{F}}^2 (T-t)} \bar{W}_{s} \right) \mid LZ^{t,x}_T = y \right] \\
        &= \E \left[ \exp\left( \sup_{0 \leq s \leq C_{\tilde{F}}^2 (T-t)} \bar{W}_{s} \right) \right].
\end{split}
\end{align}

Noting that, for any $\xi \geq 0$, $\tilde{W}_{\xi} := \sup_{0 \leq s \leq \xi} \bar{W}_{s}$ has density $f_{\tilde{W}_{\xi}}(z) = \sqrt{2/(\pi \xi)} \exp(- z^2/(2 \xi)) \mathbbm{1}_{[0,\infty)}(z)$, it follows with $\xi_t:= C_{\tilde{F}}^2 (T-t)$ that the term on the right hand side of \eqref{eq: a1p923874} equals
\begin{align*}
    \sqrt{2/(\pi \xi_t)}  \int_0^{\infty} \exp(z) \exp(- z^2/(2\xi_t)) \, \df z &= \lim_{a \to \infty} \left[ -\exp(\xi_t/2) \text{erf}\left(\dfrac{\xi_t-z}{\sqrt{2 \xi_t}}\right)\right]_{z = 0}^{z = a} \\
    &= \exp(\xi_t/2) \left[ 1 + \text{erf}(\sqrt{\xi_t/2}) \right].
\end{align*}
Here, $\text{erf}(z) = 2/\sqrt{\pi} \int_0^z \exp(-v^2) \, \df v$ denotes the Gaussian error function.
This, jointly with $\eqref{eq: ap29374}$ and $\eqref{eq: a1p923874}$, shows the second inequality in the lemma with
\begin{align}
    \lambda(T-t) = \exp\left(\dfrac{C^2_{\tilde{F} }(T-t)}{2}\right) \left[ 1 + \text{erf} \left( \sqrt{\dfrac{C^2_{\tilde{F} }(T-t)}{2}}\right)\right].
\end{align} 

The first inequality follows by interchanging the roles of $\rho_X$ and $\rho_Z$ in \eqref{eq: ap29374}. By the same arguments, we then get
\begin{align*}
    \rho_Z(t,x;T,y) &\leq \rho_X(t,x;T,y) \lambda(T-t),
\end{align*}
which shows \eqref{eq: bounds_ghlemma}.

\end{proof}

\section{Discussion and future work}
In this article, we introduced novel methodology to sample from the infinite-dimensional diffusion bridge $\Xstr$, defined as the mild solution to a stochastic partial differential equation conditioned on a finite-dimensional, linear transformation $y = LX_T$ of the state $X_T$ for some $T >0$. 

For this, we introduced the guided process $\Xcrc$ that mimics the dynamics of the original process while being forced to the conditioning state $y$. The key aspect of our results is the absolute continuity of its law $\calLcrc$ with respect to the intractable law $\calLstr$ of the diffusion bridge. This enabled the definition of Algorithm \ref{alg: MH_sampler1}, a Metropolis-Hastings sampler targeting $\calLstr$ whose performance was illustrated in numerical examples for stochastic reaction-diffusion equations.

\paragraph{Fully observed states}
One problem that we have not addressed in this article is the case that $L$ equals the identity operator, i.e. when one `fully' observes the state $ y= X_T$ instead of a finite-dimensional transformation thereof. 
Several challenges arise in this scenario. 
Firstly, the natural candidate for the $h$-function is given by $h(t,x) = \Df_x \log p(t,x;T,y)$, where $p$ is the transition density of $X$ with respect to some suitable reference measure $\nu$ on $(H,\calB(H))$. The existence of such transition densities is not necessarily given when $H$ is infinite-dimensional. In some cases, a suitable Gaussian reference measure $\nu$ can be constructed as the invariant Ornstein-Uhlenbeck measure. However, this imposes additional assumptions on $A$ and $Q$. 

The more difficult challenge is to show that the guided process $\Xcrc$ converges to the conditioning state as $t \uparrow T$, similar to our result of Theorem \ref{thm: Xcrc_limit_behavior}. Heuristically speaking, this proves difficult to show as the convergence must occur at the same rate, simultaneously in all infinite coordinates of $\Xcrc$. 
In Theorem \ref{thm: Xcrc_limit_behavior}, it is the observation operator $L$ that maps $y$ and $\Xcrc$ to a finite dimensional subspace, therefore decreasing the degrees of freedom in the components of $L\Xcrc$, that allows us to prove convergence at a suitable rate.

While the case of observing a full state $X_T$ is certainly of mathematical interest, let us remark that the assumption of observing finite dimensional transformations thereof certainly appears to be of more practical relevance.

\paragraph{Multiple observations}
A natural extension of our work is to consider partial observations $L_i X_{t_i}$ of $X$ at multiple observation times $t_i$, possibly corrupted by some observation noise $\eta_i$. 
The task of state estimation then branches into the two well-known problems of \textit{filtering} and \textit{smoothing}.
For finite-dimensional state space models, many state-of-the-art solutions to the filtering and smoothing problems are \textit{particle-based} solutions, in which states of the latent process $X$ are estimated by a set of weighted samples. Without going into too much detail, we refer to \cite{Chopin2020Introduction} for an introduction into the rich literature in this field. 

A common challenge in particle-based inference is to construct importance sampling distributions for the unobserved latent path that are tractable, mimic the behavior of the dynamical system and are informed by the data.
We hypothesize that the path measure of the guided process derived in this paper serves as a good starting point into the construction of such distributions for partially observed stochastic PDEs. We will investigate this further in future research.

\bibliographystyle{apalike}  
\bibliography{references}

\begin{thebibliography}{}

\bibitem[Agarwal et~al., 2005]{Agarwal2005Generalization}
Agarwal, R.~P., Deng, S., and Zhang, W. (2005).
\newblock Generalization of a retarded gronwall-like inequality and its applications.
\newblock {\em Applied Mathematics and Computation}, 165(3):599--612.

\bibitem[Aronson and Oser, 1967]{Aronson1967Bounds}
Aronson, B. D.~G. and Oser, J. (1967).
\newblock Bounds for the fundamental solution of a parabolic equation.
\newblock {\em Bulletin of the American Mathematical Society}, 73:890--896.

\bibitem[Beskos et~al., 2008]{Beskos08MCMC}
Beskos, A., Roberts, G., Stuart, A., and Voss, J. (2008).
\newblock Mcmc methods for diffusion bridges.
\newblock {\em Stochastics and Dynamics}, 08(03):319--350.

\bibitem[Bierkens et~al., 2020]{vdMeulenBierkensSchauer2020Simulation}
Bierkens, J., Van Der~Meulen, F., and Schauer, M. (2020).
\newblock Simulation of elliptic and hypo-elliptic conditional diffusions.
\newblock {\em Advances in Applied Probability}, 52(1):173--212.

\bibitem[Borovitskiy et~al., 2020]{Borovitskiy2020Matern}
Borovitskiy, V., Terenin, A., Mostowsky, P., et~al. (2020).
\newblock Mat{\'e}rn gaussian processes on riemannian manifolds.
\newblock {\em Advances in Neural Information Processing Systems}, 33:12426--12437.

\bibitem[Chopin et~al., 2020]{Chopin2020Introduction}
Chopin, N., Papaspiliopoulos, O., et~al. (2020).
\newblock {\em An introduction to sequential Monte Carlo}, volume~4.
\newblock Springer.

\bibitem[Clark, 1990]{Clark1990simulation}
Clark, J. (1990).
\newblock The simulation of pinned diffusions.
\newblock In {\em 29th IEEE conference on decision and control}, pages 1418--1420. IEEE.

\bibitem[Cook et~al., 2022]{Cook2022Neural}
Cook, B.~J., Peterson, A.~D., Woldman, W., and Terry, J.~R. (2022).
\newblock Neural field models: A mathematical overview and unifying framework.
\newblock {\em Mathematical Neuroscience and Applications}, 2.

\bibitem[Cotter et~al., 2013]{Cotter2013MCMC}
Cotter, S.~L., Roberts, G.~O., Stuart, A.~M., and White, D. (2013).
\newblock {MCMC Methods for Functions: Modifying Old Algorithms to Make Them Faster}.
\newblock {\em Statistical Science}, 28(3):424 -- 446.

\bibitem[Dawes and Souza, 2013]{Dawes2013Derivation}
Dawes, J. and Souza, M. (2013).
\newblock A derivation of holling's type i, ii and iii functional responses in predator--prey systems.
\newblock {\em Journal of theoretical biology}, 327:11--22.

\bibitem[Deligiannidis et~al., 2018]{Deligiannidis18/CorrelatedPM}
Deligiannidis, G., Doucet, A., and Pitt, M.~K. (2018).
\newblock The correlated pseudomarginal method.
\newblock {\em Journal of the Royal Statistical Society Series B: Statistical Methodology}, 80(5):839--870.

\bibitem[Delyon and Hu, 2006]{Delyon2006simulation}
Delyon, B. and Hu, Y. (2006).
\newblock Simulation of conditioned diffusion and application to parameter estimation.
\newblock {\em Stochastic Processes and their Applications}, 116(11):1660--1675.

\bibitem[{Di Nunno} et~al., 2023]{DinunnoPetersson23}
{Di Nunno}, G., Ortiz–Latorre, S., and Petersson, A. (2023).
\newblock Spde bridges with observation noise and their spatial approximation.
\newblock {\em Stochastic Processes and their Applications}, 158:170--207.

\bibitem[Ethier and Kurtz, 2009]{EthierKurtz2009markov}
Ethier, S.~N. and Kurtz, T.~G. (2009).
\newblock {\em Markov processes: characterization and convergence}.
\newblock John Wiley \& Sons.

\bibitem[Faugeras and Inglis, 2015]{Faugeras2015Stochasticneuralfield}
Faugeras, O. and Inglis, J. (2015).
\newblock Stochastic neural field equations: a rigorous footing.
\newblock {\em Journal of mathematical biology}, 71(2):259--300.

\bibitem[Heng et~al., 2021]{Heng2021simulating}
Heng, J., De~Bortoli, V., Doucet, A., and Thornton, J. (2021).
\newblock Simulating diffusion bridges with score matching.
\newblock {\em arXiv preprint arXiv:2111.07243}.

\bibitem[Holling, 1959]{Holling1959Components}
Holling, C.~S. (1959).
\newblock The components of predation as revealed by a study of small-mammal predation of the european pine sawfly.
\newblock {\em The Canadian Entomologist}, 91(5):293–320.

\bibitem[Johnson and Goody, 2011]{Johnson2011Original}
Johnson, K.~A. and Goody, R.~S. (2011).
\newblock The original michaelis constant: translation of the 1913 michaelis--menten paper.
\newblock {\em Biochemistry}, 50(39):8264--8269.

\bibitem[Lord et~al., 2014]{Lord2014Introduction}
Lord, G.~J., Powell, C.~E., and Shardlow, T. (2014).
\newblock {\em An introduction to computational stochastic PDEs}, volume~50.
\newblock Cambridge University Press.

\bibitem[Manca, 2009]{manca2009fokker}
Manca, L. (2009).
\newblock Fokker--planck equation for kolmogorov operators with unbounded coefficients.
\newblock {\em Stochastic analysis and applications}, 27(4):747--769.

\bibitem[Mao, 2007]{Mao2007}
Mao, X. (2007).
\newblock {\em Stochastic Differential Equations and Applications}.
\newblock Woodhead Publishing, 2nd edition.

\bibitem[Mider et~al., 2021]{Mider2021continuous}
Mider, M., Schauer, M., and Van~der Meulen, F. (2021).
\newblock Continuous-discrete smoothing of diffusions.
\newblock {\em Electronic Journal of Statistics}, 15(2):4295--4342.

\bibitem[Neal, 1999]{Neal1999Regression}
Neal, R.~M. (1999).
\newblock {Regression and Classification Using Gaussian Process Priors}.
\newblock In {\em {Bayesian Statistics 6: Proceedings of the Sixth Valencia International Meeting June 6-10, 1998}}. Oxford University Press.

\bibitem[Palmowski and Rolski, 2002]{PalmowskiRoski2002technique}
Palmowski, Z. and Rolski, T. (2002).
\newblock A technique for exponential change of measure for markov processes.
\newblock {\em Bernoulli}, pages 767--785.

\bibitem[Papaspiliopoulos and Roberts, 2012]{Papaspiliopoulos2012importance}
Papaspiliopoulos, O. and Roberts, G. (2012).
\newblock Importance sampling techniques for estimation of diffusion models.
\newblock {\em Statistical methods for stochastic differential equations}, 124:311--340.

\bibitem[Papaspiliopoulos et~al., 2013]{Papaspiliopoulos2013data}
Papaspiliopoulos, O., Roberts, G.~O., and Stramer, O. (2013).
\newblock Data augmentation for diffusions.
\newblock {\em Journal of Computational and Graphical Statistics}, 22(3):665--688.

\bibitem[Pasemann et~al., 2021]{Pasemann2021Diffusivity}
Pasemann, G., Flemming, S., Alonso, S., Beta, C., and Stannat, W. (2021).
\newblock Diffusivity estimation for activator--inhibitor models: Theory and application to intracellular dynamics of the actin cytoskeleton.
\newblock {\em Journal of nonlinear science}, 31(3):59.

\bibitem[Pieper-Sethmacher et~al., 2024]{Piepersethmacher24Classexponentialchangesmeasure}
Pieper-Sethmacher, T., van~der Meulen, F., and van~der Vaart, A. (2024).
\newblock On a class of exponential changes of measure for stochastic pdes.

\bibitem[Roberts and Stramer, 2001]{Roberts2001inference}
Roberts, G.~O. and Stramer, O. (2001).
\newblock On inference for partially observed nonlinear diffusion models using the metropolis--hastings algorithm.
\newblock {\em Biometrika}, 88(3):603--621.

\bibitem[Rogers and Williams, 2000]{Rogers2000diffusions}
Rogers, L. C.~G. and Williams, D. (2000).
\newblock {\em Diffusions, Markov processes, and martingales: It{\^o} calculus}, volume~2.
\newblock Cambridge university press.

\bibitem[Schauer et~al., 2017]{vdMeulenSchauer2017Guided}
Schauer, M., van~der Meulen, F., and van Zanten, H. (2017).
\newblock {Guided proposals for simulating multi-dimensional diffusion bridges}.
\newblock {\em Bernoulli}, 23(4A):2917 -- 2950.

\bibitem[Yang et~al., 2024]{Yang2024Simulating}
Yang, G., Baker, E.~L., Severinsen, M.~L., Hipsley, C.~A., and Sommer, S. (2024).
\newblock Simulating infinite-dimensional nonlinear diffusion bridges.

\bibitem[Young, 1984]{Young1984Local}
Young, D.~A. (1984).
\newblock A local activator-inhibitor model of vertebrate skin patterns.
\newblock {\em Mathematical Biosciences}, 72(1):51--58.

\end{thebibliography}

\begin{appendices}

\section{}
\label{app: A}

\begin{proposition}[Proposition \ref{prop: existence_diffusion_bridge} above.]
The mapping $h$ defined in \eqref{eq: def_h_g} satisfies Assumptions $(i)$ and $(ii)$ of Theorem \ref{thm: htransform} with $K h = 0$. Moreover, the measure $\Pstr$ defined on $\calF_T$ by
\begin{align}
\label{eq: aps98dp12973}
    \df \Pstr_t = \frac{h(t,X_t)}{h(0,x_0)} \df \P_t, \quad t < T,
\end{align}
is such that, for any bounded and measurable function $\varphi$ and $0 \leq t_1 \leq ... \leq t_n < T$, it holds
\begin{align}
\label{eq: ap129831}
    \E^{\star}[\varphi(X_{t_1},...,X_{t_n})] = \E[ \varphi(X_{t_1},...,X_{t_n}) \mid L X_T = y].
\end{align}
We call the process $X$ under $\Pstr$ the infinite-dimensional diffusion bridge (of $X$ given $LX_T = y$).
\end{proposition}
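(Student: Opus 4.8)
The plan is to exploit the defining property that $h(t,x)=\rho_X(t,x;T,y)$ is the conditional Lebesgue density of $LX_T$ given $X_t=x$, so that $h(t,X_t)$ is a genuine $\P$-martingale; from this single fact both $Kh=0$ and the bridge identity will follow. I would first establish the martingale property. Fix $s<t<T$. By the Markov property, $\E[h(t,X_t)\mid\calF_s]=\E[\rho_X(t,X_t;T,y)\mid X_s]$, and conditioning $g(LX_T)$ on $X_t$ inside the expectation given $X_s$ (the tower property applied at the level of densities, i.e.\ Chapman--Kolmogorov) yields $\E[\rho_X(t,X_t;T,y')\mid X_s=x]=\rho_X(s,x;T,y')$ for Lebesgue-a.e.\ $y'$, hence $\E[h(t,X_t)\mid\calF_s]=h(s,X_s)$. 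Integrability is immediate since $\E[h(t,X_t)]=h(0,x_0)<\infty$. Phrased through the transition semigroup this reads $(T_\tau h)(s,x)=\E[h(s+\tau,X_{s+\tau})\mid X_s=x]=h(s,x)$, so $T_\tau h=h$ for every $\tau$ and therefore $Kh=\pilim_{\tau\downarrow 0}(T_\tau h-h)/\tau=0$. With $Kh=0$, Assumption $(i)$ of Theorem \ref{thm: htransform} reduces to $h\in C_m([0,S]\times H)$ together with the triviality $h^{-1}Kh\equiv 0\in C_m$, while Assumption $(ii)$ is exactly the martingale property just shown, since $Kh=0$ collapses the process $E^h_t$ in \eqref{eq: E^h_t} to $h(t,X_t)/h(0,x_0)$.

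It then remains to identify $\Pstr$ with the bridge law. Fix $t_n<T$ and set $\psi=\varphi(X_{t_1},\dots,X_{t_n})$, which is bounded and $\calF_{t_n}$-measurable. By the martingale property of $E^h$ and the definition \eqref{eq: aps98dp12973},
\begin{align*}
\Estr[\psi]=\E\!\left[\psi\,\frac{h(t_n,X_{t_n})}{h(0,x_0)}\right]
=\frac{1}{\rho_X(0,x_0;T,y)}\,\E\!\left[\psi\,\rho_X(t_n,X_{t_n};T,y)\right].
\end{align*}
Since $X$ is Markov, $\rho_X(t_n,X_{t_n};T,\cdot)$ is a version of the conditional density of $LX_T$ given $\calF_{t_n}$. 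To convert the right-hand side into a conditional expectation I would test against an arbitrary bounded $g\colon\R^k\to\R$, conditioning on $\calF_{t_n}$,
\begin{align*}
\E[\psi\,g(LX_T)]=\int_{\R^k} g(y')\,\E\!\left[\psi\,\rho_X(t_n,X_{t_n};T,y')\right]\df y',
\end{align*}
and compare with the disintegration over the law of $LX_T$, whose Lebesgue density is $y'\mapsto\rho_X(0,x_0;T,y')$,
\begin{align*}
\E[\psi\,g(LX_T)]=\int_{\R^k} g(y')\,\E[\psi\mid LX_T=y']\,\rho_X(0,x_0;T,y')\,\df y'.
\end{align*}
As $g$ is arbitrary, the two integrands coincide for Lebesgue-a.e.\ $y'$, which gives $\E[\psi\mid LX_T=y']=\E[\psi\,\rho_X(t_n,X_{t_n};T,y')]/\rho_X(0,x_0;T,y')$; evaluating at $y'=y$ yields precisely $\Estr[\psi]=\E[\psi\mid LX_T=y]$, i.e.\ \eqref{eq: ap129831}.

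The main obstacle is not the algebra but the measure-theoretic bookkeeping surrounding the densities $\rho_X$. Their existence itself rests on the Girsanov/absolute-continuity argument sketched after \eqref{eq: def_h_g} (namely $\calL(LX_T)\ll\calL(LZ_T)$), and since a regular conditional density is determined only up to a $\calL(LX_T)$-null set, each of the two a.e.-in-$y'$ steps above (Chapman--Kolmogorov and the test-function comparison) can only be asserted off a common null set; this is exactly why the identity is stated $\calL(LX_T)$-almost surely in $y$ in Proposition \ref{prop: existence_diffusion_bridge}. A secondary technical point is checking $h\in C_m([0,S]\times H)$ for each $S<T$: one controls the Gaussian-type tail and growth of $\rho_X$ by comparison with the explicit Gaussian density $\rho_Z$, which is the mechanism made quantitative in Lemma \ref{lem: bounds_gh}. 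Here the restriction $S<T$ is essential, as $\rho_X(t,\cdot\,;T,y)$ degenerates as $t\uparrow T$, so all statements are made on $[0,S]$ and the passage to the endpoint is handled separately in Theorem \ref{thm: absolute_continuity}.
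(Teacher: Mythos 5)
Your proposal is correct and follows essentially the same route as the paper: the Chapman--Kolmogorov/tower-property argument establishing that $h$ is space-time harmonic (hence $Kh=0$ and $E^h_t=h(t,X_t)/h(0,x_0)$ is a martingale), followed by a disintegration of $\E[\psi\,g(LX_T)]$ over the law of $LX_T$ to identify $\Pstr$ with the conditional law, with the same a.e.-in-$y$ caveat. The only cosmetic differences are that you test against bounded functions $g$ where the paper tests against indicator sets $\mathbbm{1}_{\{LX_T\in\calA\}}$, and that you treat $\varphi(X_{t_1},\dots,X_{t_n})$ directly as an $\calF_{t_n}$-measurable variable where the paper does the single-time case and then invokes a standard cylindrical argument.
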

\begin{proof}
We first show that $K h = 0$. For this, it suffices to show that $h$ satisfies
\begin{align}
\label{eq: h_spacetime_harmonic}
    h(s,x) = \E[ h(t+s, X_{t+s}) \mid X_s = x]
\end{align}
for all $s,t \geq 0$ such that $s+t < T$ and $x \in H$. The claim then follows immediately from the definition of $K$. 

Recall that $h(s,x) = \rho_X(s,x;T,y)$ is defined as the density of $LX_T \mid X_s =x$, evaluated at some fixed $y \in \R^k$.
Hence, to show \eqref{eq: h_spacetime_harmonic}, let $\calA \in \calB(\R^d)$ be arbitrary and denote by $\mu_{s,t}(x,\calA)$ the Markov transition kernel of $X$. 
Then indeed for any $s,t \geq 0$ such that $t+s < T$ it holds that
\begin{align*}
    \P(L X_T \in \calA \mid X_s = x) &= \int_H \P(L X_T \in \calA \mid X_s = x, X_{s+t} = z) \, \mu_{s,s+t}(x, \df z) \\ 
    &= \int_H \P(L X_T \in \calA \mid X_{s+t} = z) \, \mu_{s,s+t}(x, \df z) \\ 
    &= \int_H \int_{\calA}  \rho_X(s+t,z;T,y) \, \df y ~\mu_{s,s+t}(x, \df z) \\
    &= \int_{\calA}  \E \left[ \rho_X(s+t,X_{s+t}; T, y) \mid X_s = x\right] \, \df y.
\end{align*}
Since $\calA$ was arbitrary, this shows \eqref{eq: h_spacetime_harmonic} for almost every $y \in \R^k$.

To show the second claim \eqref{eq: ap129831}, we write, for any $y \in \R^k$ with slight abuse of notation, $\E^y \left[ \varphi(X_t) \right]$ for the change of measure $\Pstr$ as defined in \eqref{eq: aps98dp12973} with choice $h(t,x) = \rho_X(t,x;T,y)$.
We show that $\E^{y}\left[ \varphi(X_t) \right]$ is a version of $\E\left[\varphi(X_t) \mid LX_T = y\right]$.
For this, it suffices to show that
\begin{align}
    \E \left[  \E^{LX_T} \left[ \varphi(X_t) \right] \mathbbm{1}_{\{L X_T \in \calA\}} \right] = \E \left[ \varphi(X_t) \mathbbm{1}_{\{L X_T \in \calA\}} \right]
\end{align}
for any $\calA \in \calB(\R^d)$.
Indeed we have for any $t \in [0,T)$ that
\begin{align*}
     \E \left[  \E^{LX_T}\left[ \varphi(X_t) \right] \mathbbm{1}_{\{LX_T \in \calA\}} \right] &= \int_{\calA} \E^{y}\left[ \varphi(X_t) \right] \rho_X(0,x_0;T,y) \df y \\
     &= \int_{\calA}  \E\left[ \varphi(X_t) \frac{\rho_X(t,X_t;T,y)}{\rho_X(0,x_0;T,y)}\right] \rho_X(0,x_0;T,y) \df y \\
    &= \E \left[ \varphi(X_t) \int_{\calA}   \rho_X(t,X_t;T,y)  \df y \right] \\
    &= \E \left[ \varphi(X_t) \E\left[ \mathbbm{1}_{\{LX_T \in \calA\}} \mid X_t \right] \right] \\
    &= \E \left[ \varphi(X_t) \mathbbm{1}_{\{LX_T \in \calA\}} \right].
\end{align*}
Here we use the definition of $\rho_X(0,x_0;T,y)$ in the first line, the definition of $\E^{y}\left[ \varphi(X_t) \right]$ in the second line, Fubini's theorem in the third step and the law of total expectation in the last step. 
The claim now follows from a standard cylindrical argument, see e.g. \cite{EthierKurtz2009markov}, Proposition 4.1.6.   
\end{proof}

\begin{proposition}[Proposition \ref{prop: existence_guided_process} above.]
The mapping $g$ defined in \eqref{eq: def_h_g} satisfies Assumptions $(i)$ to $(iii)$ of Theorem \ref{thm: htransform} with $K g(t,x) = \langle F(t,x), \Df_x g(t,x) \rangle.$
In particular, there exists a unique measure $\Pcrc$ on $\calF_T$, defined by 
\begin{align*}
    \df \Pcrc_t &= \dfrac{g(t,X_t)}{g(0,x_0)} \exp\left( - \int_0^t \langle F(s,X_s), \Df_x \log g(s, X_s) \rangle \, \df s \right) \df \P_t, \quad t < T,
\end{align*} 
such that $X$ under $\Pcrc$ is the unique mild solution to the SPDE
\begin{align}
\label{eq: dXcrc2}
        \df \Xcrc_t = \left[ A \Xcrc_t + F(t,\Xcrc_t) + Q \Df_x \log g(t, \Xcrc_t) \right] \df t + Q^{\fsqrt} \df \Wcrc_t, \quad t \in [0,T),
\end{align} 
where $(\Wcrc_t)_{t \in [0,T)}$ is a $\Pcrc$-cylindrical Wiener process. 
\end{proposition}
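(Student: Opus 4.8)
The plan is to verify hypotheses (i)--(iii) of Theorem \ref{thm: htransform} for $g$ and then read off the conclusion. I would begin with the regularity, since $g$ is available in closed form as the Gaussian density displayed just before Proposition \ref{prop: existence_guided_process}. Fréchet-differentiating the exponent gives $\Df_x g(t,x) = g(t,x)\, G(t,x)$ with $G$ as in \eqref{eq: G}, so that $\Df_x \log g = G$. On any strip $[0,S]\times H$ with $S<T$ the matrix $R_{T-t}$ stays uniformly positive definite and the operators $R_{T-t}^{-1}$, $L_{T-t}$ are bounded and continuous in $t$ (by strong continuity of $(S_t)$); hence the Gaussian prefactor is bounded and the exponential decay dominates the polynomial growth of $G$. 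This shows $g\in C_m([0,S]\times H)$ and $\Df_x g\in C_m([0,S]\times H;H)$, i.e.\ the regularity part of (i) together with all of (iii).

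Next I would compute $Kg$. The key observation is that $g(t,x)=\rho_Z(t,x;T,y)$ is space--time harmonic for the Ornstein--Uhlenbeck process $Z$: repeating the tower-property argument from the proof of Proposition \ref{prop: existence_diffusion_bridge} verbatim with $Z$ in place of $X$ yields $\E[g(s+t,Z_{s+t})\mid Z_s=x]=g(s,x)$, so the $Z$-generator annihilates $g$. Since $X$ and $Z$ solve the same equation up to the additional bounded drift $F$, I would then argue that the two generators differ only by the first-order term this drift produces, giving $Kg(t,x)=\langle F(t,x),\Df_x g(t,x)\rangle$. Concretely, writing $X^{s,x}_{s+t}=Z^{s,x}_{s+t}+t\,F(s,x)+o(t)$ to leading order and using the smoothness of $g$, the difference $\E[g(s+t,X_{s+t})\mid X_s=x]-\E[g(s+t,Z_{s+t})\mid Z_s=x]$ equals $t\langle F(s,x),\Df_x g(s,x)\rangle+o(t)$, and dividing by $t$ and letting $t\downarrow 0$ gives the claim. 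With $Kg=g\langle F,G\rangle$ one has $g^{-1}Kg=\langle F,G\rangle$, which is continuous and of at most linear growth on each $[0,S]$ because $F$ is bounded and $G$ is affine in $x$ with bounded coefficients there; this completes hypothesis (i).

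For hypothesis (ii) I would show that $E^g_t=\tfrac{g(t,X_t)}{g(0,x_0)}\exp(-\int_0^t\langle F(s,X_s),G(s,X_s)\rangle\,\df s)$ is a genuine $\P$-martingale on $[0,S]$. It is a nonnegative local martingale, hence a supermartingale, so it suffices to check constant expectation, equivalently uniform integrability; this I would deduce from the finite moments of the mild solution $X$ (from \eqref{eq: mild_sol}) together with the boundedness of $g$ and the at-most-linear growth of $\langle F,G\rangle$ on $[0,S]$, via a Novikov-type estimate, alternatively invoking the corresponding result of \cite{Piepersethmacher24Classexponentialchangesmeasure}. Having verified (i)--(iii), Theorem \ref{thm: htransform} applies directly: it produces the measure $\Pcrc$ with the stated density and shows that $X$ under $\Pcrc$ is a mild solution of \eqref{eq: dXcrc2}, since $Q\Df_x\log g=QG$. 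Uniqueness of this mild solution I would obtain from the standard fixed-point argument underlying \eqref{eq: mild_sol}: on each $[0,S]$ the drift $F(t,x)+QG(t,x)$ is Lipschitz in $x$ (as $F$ is Lipschitz and $QG$ is affine with bounded coefficients), giving a unique mild solution on $[0,S]$ and hence on $[0,T)$.

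The main obstacle is the rigorous identification $Kg=\langle F,\Df_x g\rangle$ in the infinite-dimensional setting: because $A$ is unbounded and the second-order part is an infinite-dimensional trace, one cannot simply write down and subtract the two generators, and the error terms in the expansion $X^{s,x}_{s+t}=Z^{s,x}_{s+t}+t\,F(s,x)+o(t)$ must be controlled uniformly in a manner compatible with the bounded-pointwise notion of convergence defining $\dom_m(K)$. A secondary technical point is upgrading $E^g$ from a local to a true martingale, where the linear (rather than bounded) growth of $G$ near $t=T$ forces the argument to be carried out on strips $[0,S]$ with $S<T$.
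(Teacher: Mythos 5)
Your overall route is the same as the paper's: verify hypotheses (i)--(iii) of Theorem \ref{thm: htransform} using the closed form of $g$, show that $g$ is space--time harmonic for the Ornstein--Uhlenbeck semigroup via the tower property (exactly as in the proof of Proposition \ref{prop: existence_diffusion_bridge}), identify $Kg$ as the OU-generator applied to $g$ plus the first-order perturbation $\langle F,\Df_x g\rangle$, and establish the martingale property of $E^g$ on strips $[0,S]$, $S<T$. The one step you leave genuinely open --- and correctly flag as the main obstacle --- is the rigorous identification $Kg=\tilde K g+\langle F,\Df_x g\rangle$: your first-order expansion $X^{s,x}_{s+t}=Z^{s,x}_{s+t}+tF(s,x)+o(t)$ is only heuristic, and controlling the error uniformly in the bounded-pointwise topology defining $\dom_m(K)$ is nontrivial because $A$ is unbounded. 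The paper does not carry out this expansion; it invokes an existing perturbation result (Theorem 4.1 of \cite{manca2009fokker}), which applies precisely because $g$ is Fréchet differentiable with bounded derivative on $[0,S]\times H$. So to close your argument you should replace the expansion by that citation (or an equivalent generator-perturbation theorem). For hypothesis (ii) the paper likewise does not run a Novikov estimate but observes that $\Df_x\log g(t,x)=L^*_{T-t}R^{-1}_{T-t}(y-L_{T-t}x)$ is Lipschitz in $x$ uniformly on $[0,S]$ and appeals to Lemma 3.6 of \cite{Piepersethmacher24Classexponentialchangesmeasure}; your fallback of invoking that result is the intended path, and your Novikov sketch would otherwise need the moment bounds spelled out. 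Everything else in your write-up (regularity of $g$, conclusion via Theorem \ref{thm: htransform}, uniqueness of the mild solution from the Lipschitz drift) matches the paper.
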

\begin{proof}
We begin by showing that $g$ satisfies Assumption (iii) of Theorem \ref{thm: htransform}.
By definition of $g$ we have 
\begin{align*}
    g(t,x) &= \dfrac{1}{\sqrt{(2 \pi)^k \det(R_{T-t})}} \exp \left( - \dfrac{1}{2} | R_{T-t}^{-\fsqrt} (y - L_{T-t} x) |^2 \right),
\end{align*}
from which it follows that $g$ is continuous and bounded on $[0,S]$ for any $S < T$.
Moreover, $g$ is Fréchet differentiable in $x$ with bounded derivative 
\begin{align*}
    \Df_x g(t,x) = g(t,x) \left( L^*_{T-t} R^{-1}_{T-t} (y - L_{T-t} x)\right).
\end{align*}
This proves Assumption (iii). To show the first assumption, denote by $\tilde{K}$ the infinitesimal generator of the Ornstein-Uhlenbeck process, defined as in \eqref{eq: def_inf_gen} by substituting $T$ with the transition semigroup of $Z$.
By the Fréchet differentiability of $g$, it follows from \cite{manca2009fokker}, Theorem 4.1, that
\begin{align*}
    (K g)(t,x) = \tilde{K} g(t,x) + \bigl \langle F(t,x), \Df_x g(t,x) \bigr \rangle.
\end{align*}
Reiterating the arguments in the proof of Proposition \ref{prop: existence_diffusion_bridge} gives that $\tilde{K} g = 0$. Hence, it follows that 
\begin{align*}
    (g^{-1} Kg)(t,x) = \bigl \langle F(t,x), \Df_x \log g(t,x) \bigr \rangle,
\end{align*}
which shows Assumption (i) and the existence of the continuous local martingale $(E^g_t)_{t < T}$. Lastly, to show that $E^g$ is a martingale, it suffices to note that 
\begin{align*}
    \Df_x \log g(t,x) = L^*_{T-t} R^{-1}_{T-t} (y - L_{T-t} x)
\end{align*}
is Lipschitz in $x$, uniformly in $t \in [0,S]$ for any $S < T$. The martingale property then follows from $\cite{Piepersethmacher24Classexponentialchangesmeasure}$, Lemma 3.6.
\end{proof}

\section{}
\label{app: B}

\begin{lemma}
\label{lem: gronwall_inequality}
Let $t \mapsto \zeta(t)$ be nonnegative and continuously differentiable on $[t_0,t_1)$ and let $t \mapsto f(t) $ be nonnegative and continuous on $[t_0, t_1).$ Assume that $t \mapsto u(t)$ is a nonnegative and continuous function on $[t_0, t_1)$ such that
\begin{align*}
    u(t) \leq \zeta(t) + \int_{t_0}^{t_1} f(s) \sqrt{u(s)} \df s. 
\end{align*}
Then $u(t)$ satisfies
\begin{align*}
    u(t) \leq \left( \sqrt{\zeta(t_0) + \int_{t_0}^{t} |\zeta'(s)| \df s} + \frac{1}{2} \int_{t_0}^{t} f(s) \df s \right)^2, \quad t \in [t_0,t_1).
\end{align*}
\end{lemma}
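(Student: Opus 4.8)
The plan is to reduce the square-root integral inequality to a linear differential inequality for the square root of a suitable differentiable majorant. First I would replace $\zeta(t)$ by the cruder bound $\zeta(t)\le \zeta(t_0)+\int_{t_0}^t|\zeta'(s)|\,ds=:P(t)$, which is licensed by the continuous differentiability of $\zeta$, so that the hypothesis becomes $u(t)\le P(t)+\int_{t_0}^t f(s)\sqrt{u(s)}\,ds$ (I read the upper integration limit as $t$, as the Gronwall structure requires). Then I introduce the majorant
\[
R(t)=P(t)+\int_{t_0}^t f(s)\sqrt{u(s)}\,ds,
\]
so that $u(t)\le R(t)$, $R(t)\ge P(t)$, and $R$ is differentiable with $R'(t)=|\zeta'(t)|+f(t)\sqrt{u(t)}\le |\zeta'(t)|+f(t)\sqrt{R(t)}$, where the last step uses $u\le R$ and monotonicity of the root.

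Next I would differentiate $\sqrt{R}$. The key computation is
\[
\frac{d}{dt}\sqrt{R(t)}=\frac{R'(t)}{2\sqrt{R(t)}}\le \frac{|\zeta'(t)|}{2\sqrt{R(t)}}+\frac{f(t)}{2}\le \frac{|\zeta'(t)|}{2\sqrt{P(t)}}+\frac{f(t)}{2}=\frac{d}{dt}\sqrt{P(t)}+\frac{f(t)}{2},
\]
where the middle inequality exploits $R(t)\ge P(t)$ to push the awkward $\sqrt{R}$ in the denominator down to $\sqrt{P}$, which is an explicit quantity. Integrating from $t_0$ to $t$ and noting that $R(t_0)=P(t_0)=\zeta(t_0)$ so the boundary terms cancel, I obtain
\[
\sqrt{R(t)}\le \sqrt{P(t)}+\frac12\int_{t_0}^t f(s)\,ds=\sqrt{\zeta(t_0)+\int_{t_0}^t|\zeta'(s)|\,ds}+\frac12\int_{t_0}^t f(s)\,ds.
\]
Squaring this and invoking $u(t)\le R(t)$ yields exactly the asserted bound.

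The main obstacle will be the possible degeneracy $P(t)=0$ (which occurs when $\zeta(t_0)=0$ and $\zeta'\equiv 0$ on $[t_0,t]$), since then the denominator $\sqrt{P(t)}$ in the key inequality is singular. I would deal with this by the standard regularization: carry out the whole argument with $R(t)+\varepsilon$ and $P(t)+\varepsilon$ in place of $R(t)$ and $P(t)$ for an arbitrary $\varepsilon>0$, for which every denominator is bounded below by $\sqrt{\varepsilon}>0$ and every step above remains valid verbatim (the boundary terms still cancel because $R(t_0)+\varepsilon=P(t_0)+\varepsilon$), and then let $\varepsilon\downarrow 0$ in the final inequality. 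A secondary point worth flagging is that $u$ itself is only assumed continuous, not differentiable; this causes no trouble, however, since the argument differentiates only the manifestly $C^1$ majorant $R$ and never $u$.
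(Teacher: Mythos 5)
Your proof is correct. The paper does not actually prove this lemma from first principles: it simply cites Theorem 2.1 of Agarwal, Deng and Zhang (2005), a general nonlinear Gronwall--Bellman type integral inequality, and specializes it with $n=1$, $w_1(x)=\sqrt{x}$, $W_1(x)=2\sqrt{x}$. You instead give a self-contained elementary argument: majorize $\zeta$ by the nondecreasing $C^1$ function $P(t)=\zeta(t_0)+\int_{t_0}^{t}|\zeta'(s)|\,\mathrm{d}s$, form the differentiable majorant $R$, and run a differential inequality on $\sqrt{R}$, using $R\ge P$ to replace the unknown $\sqrt{R}$ in the denominator by the explicit $\sqrt{P}$ so that the troublesome term integrates exactly to $\sqrt{P(t)}-\sqrt{P(t_0)}$; the $\varepsilon$-regularization correctly disposes of the degenerate case $P=0$ (where, as you could also have observed, $|\zeta'|$ necessarily vanishes as well). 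This is essentially the classical Ou-Iang/Dafermos-style mechanism that underlies the cited general result. What your route buys is self-containedness and transparency about where each term of the final bound originates; what the citation buys is brevity and access to a more general statement (arbitrary nonlinearities $w_i$) that the paper does not actually need. One cosmetic point you handled correctly: the upper limit $t_1$ in the hypothesis as printed is evidently a typo for $t$, since otherwise the hypothesis would not have the Gronwall structure the conclusion requires.
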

\begin{proof}
    This follows from \cite{Agarwal2005Generalization}, Theorem 2.1. In their notation we have $n=1, w_1(x) = \sqrt{x}, W_1(x) = 2 \sqrt{x}$.
\end{proof}

\section{}
\label{app: C}

We provide additional numerical evidence that the results in Theorem \ref{thm: absolute_continuity} hold true in the setting of Example \ref{ex: AllenCahn}, even when Assumption \ref{ass: basic_assumptions_SPDE}(iv) and the condition of Lemma \ref{lem: bounds_gh} are not met. 
Assume that Equation \eqref{eq: Phi_T} holds true. It then follows that 
\begin{align}
\label{eq: ao123pu13o8}
    \rho_X(0,x_0;T,y) = \rho_Z(0, x_0;T,y) \Ecrc[ \Psi_T(X^{\circ,y})].
\end{align}
Here, to underline the dependence on $y$, we add a superscript $y$ to the guided process $\Xcrc$ with end state $L\Xcrc_T = y$.
Furthermore, we denote the corresponding guiding term $\Gcrc$ of $X^{\circ,y}$ by $G^y$.

Fix $x_0, T$ and denote by $\pi(y) = \rho_X(0,x_0;T,y)$ the density of $L X_T \mid X_0 = x_0$. 
It then follows from \eqref{eq: ao123pu13o8} that, for samples $\mathbb{X}^{\circ,y} =  \{X_i^{\circ,y}, i = 1,...,n\}$, the estimator 
\begin{align}
\label{eq: pi(y)}
    \hat{\pi}(y,\mathbb{X}^{\circ,y}) =  \rho_Z(0, x_0;T,y) \dfrac{1}{n} \sum_{i=1}^n \Psi_T(X^{\circ,y}_i)
\end{align}
is an unbiased estimator of the otherwise intractable density $\pi(y)$. This allows us to numerically sample from the distribution of $LX_T$ in two ways:
\begin{itemize}
    \item[1.] By approximating the solution to the Allen-Cahn equation \eqref{eq: AllenCahn} by a numerical SPDE solver and evaluating $LX_T$.
    \item[2.] By sampling from $\calL(LX_T)$ via an MCMC scheme based on the unbiased estimator $\hat{\pi}(y,\mathbb{X}^{\circ})$.
\end{itemize}   

For the numerical solver we use a spectral Galerkin approximation and a semi-implicit Euler-Maruyama scheme to approximate the resulting SODE. 
For the second option we implement the correlated pseudomarginal (CPM) sampler as introduced in \cite{Deligiannidis18/CorrelatedPM}. The proposals in $y$ are simple random walk proposals. For the proposals in the latent variable $\Xcrc$ we use the same pCN scheme as introduced in Algorithm \ref{alg: MH_sampler1}.
The complete algorithm is outlined in Algorithm \ref{alg: CPM_sampler}.

\begin{algorithm}[H]
\label{alg: CPM_sampler}
\caption{CPM Sampler of $L X_T$}
\LinesNotNumbered  
    \KwIn{SPDE Parameters $A,~F, ~Q$ and $x_0$, observation operator $L$, gridded domain $D \times [0,T]$, iterations $N$, step size $\beta$, step size $\rho \in [0,1)$}
    \KwOut{Samples $(y_i)_{i=0}^N$ of $L X_T$.}
    \textbf{Initialize:} Draw $y_0 \sim \calN(0, \beta^2 I)$, draw a Wiener process $W$ and compute $X^{\circ,y_0} = \solve(A,F+Q G^{y_0},Q,W)$\;
    \For{$i = 0... ~N-1$}{
        \textbf{Proposal} \\
                \quad(i) ~~Draw $v \sim \calN(0, I)$ and set $y^{\circ} =  y_i + \beta v$ \; 
                \quad(ii) ~~Draw a Wiener process $V$ and set $W^{\circ}= \sqrt{1 - \rho^2} W + \rho V$\;
            \quad(iii) ~Compute $X^{\circ,y^{\circ}} = \solve(A,F+QG^{y^{\circ}},Q,W^{\circ})$\;
        \textbf{Update}  \\
            \quad(i) 
        ~Compute $M = \min\left(1, \dfrac{\hat{\pi}(y^{\circ},X^{\circ,y^{\circ}})}{\hat{\pi}(y_i,X^{\circ,y_i})} \right)$\;
            \quad(ii) Draw $U \sim \text{Unif}(0,1)$\; 
        \quad \quad \If{$U < M $}{
        \quad \quad Set $y_{i+1} = y^{\circ}$, $W = W^{\circ}$ and $X^{\circ,y} = X^{\circ,y^{\circ}}$\;
    }
    \quad \quad \Else{
        \quad \quad Set $y_{i+1} = y_i$.
    }
        }
\end{algorithm}

We employ Algorithm \ref{alg: CPM_sampler} for the Allen-Cahn equation \eqref{eq: AllenCahn} with parametrisation
\begin{align}
\label{eq: a238412}
    [\eta, \zeta, \sigma_0, \rho, \nu] = [1 \times 10^{-2}, 10, 10^7, 3 \times 10^{-6}, 1]
\end{align}
and initial value $x_0(\xi) = 0.5 \sin(4\xi), \xi \in [0, \pi].$ As observation operator we take $L = P_4$, in which case the algorithm returns samples of the first four spectral modes of the Allen-Cahn equation. 
The chain is run with step size $\beta = 0.03$ and $\rho = 0.1$ for $N = 100~000$ iterations of which we discard the first $50~000$ as burn-in. The resulting acceptance percentage equals $24\%$.

To compare the chain output with the empirical distribution of $LX_T$, we numerically solve the SPDE \eqref{eq: AllenCahn} and evaluate $LX_T$ for a total of $10~000$ samples. 
Figure \ref{fig: AC_QQ} shows a quantile-quantile plot of the two sample sets for the first and fourth spectral modes of $X_T$. It indicates that Algorithm \ref{alg: CPM_sampler} correctly samples from the target distribution of $L X_T$, providing further evidence that Equation \eqref{eq: pi(y)} and hence the results of Theorem \ref{thm: absolute_continuity} remain valid, even if Assumption \ref{ass: basic_assumptions_SPDE}(iv), and possibly Assumption \ref{ass: absolute_continuity}, are not satisfied.

\begin{figure}[ht]
    \centering
    \includegraphics[width=0.9\linewidth]{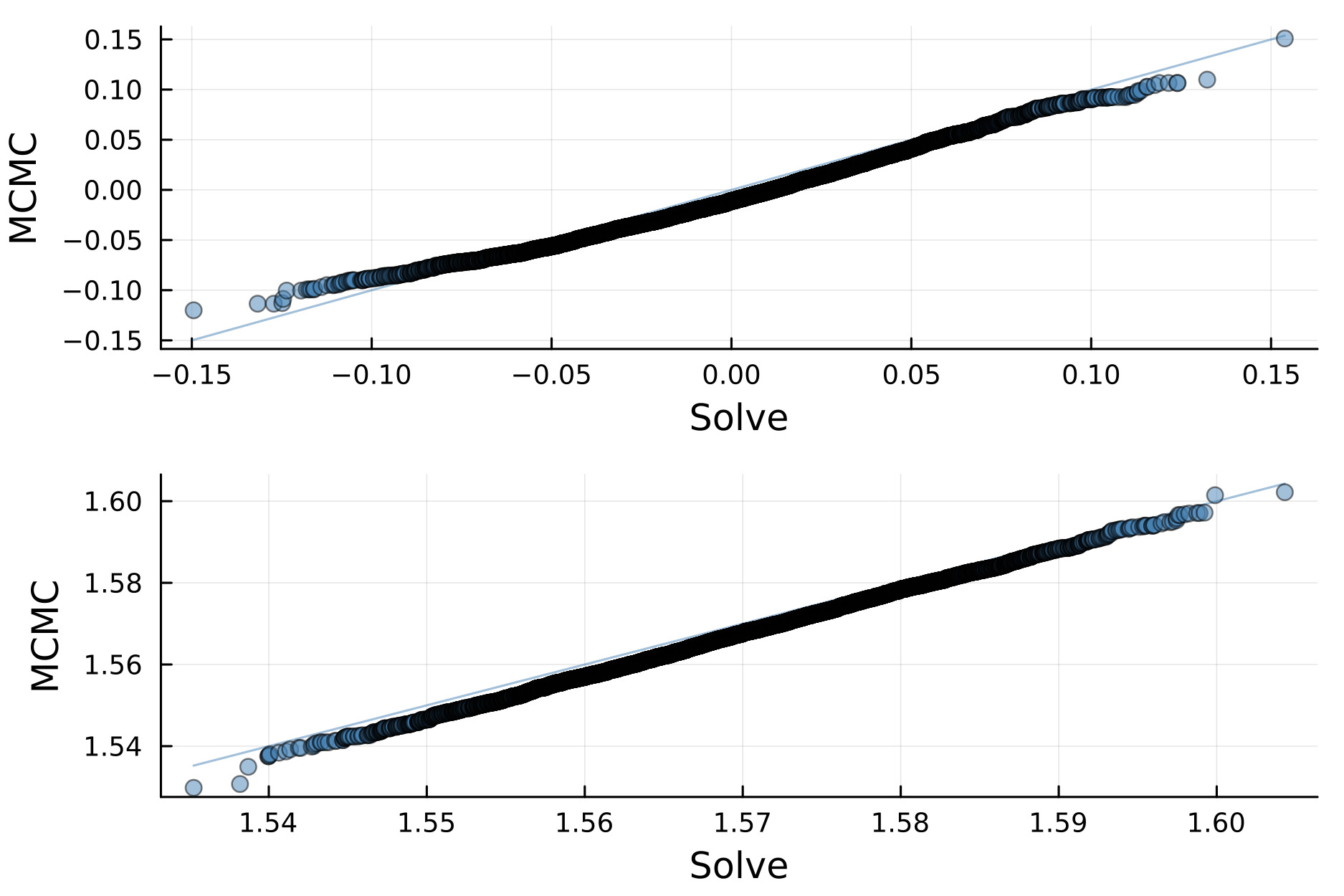}
    \caption{Quantile-quantile plot of samples of the first and fourth spectral modes of $LX_T$. Solve: samples obtained by approximating the solution to the Allen-Cahn equation \eqref{eq: AllenCahn} by a numerical SPDE solver and evaluating $LX_T$. MCMC: samples obtained by Algorithm 2, with the first $50~000$ out of $100~000$ samples discarded as burn-in. Top: first spectral mode. Bottom: fourth spectral mode.}
    \label{fig: AC_QQ}
\end{figure}

\end{appendices}

\end{document}